\newcommand{\sss}{\mathfrak{s}}
\newcommand{\sib}{sib}
\newcommand{\lab} {lab}
\newcommand{\col} {col}
\newcommand{\hth} {ht}
\newcommand{\sign} {sign}
\newcommand{\spin} {spin}
\newcommand{\type}{tp}
\newcommand{\wR}{\widehat{R}}
\newcommand{\wcol} {\widehat{col}}
\newcommand{\wsign} {\widehat{sign}}
\newcommand{\wspin} {\widehat{spin}}
\newcommand{\whth} {\widehat{ht}}
\newcommand{\semb}{$*$-embedding }
\newcommand{\sembs}{$*$-embeddings }
\newcommand{\N}{\mathbb{N}}
\newcommand{\Z}{\mathbb{Z}}
\newcommand{\SC}{\mathcal{C}}
\newcommand{\SD}{\mathcal{D}}
\newcommand{\SP}{\mathcal{P}}
\newcommand{\SR}{\mathcal{R}}
\newcommand{\SSS}{\mathcal{S}}
\newcommand{\Sp}{\mathcal{S}^p}
\newcommand{\ST}{\mathcal T}
\def\Power #1 { \powerset(#1) }
\newtheorem{definition}{{\bf Definition}}[section]
\newtheorem{theorem}{{\bf Theorem}}
\newtheorem*{theorem*}{{\bf Theorem}}
\newtheorem{corollary}[definition]{{\bf Corollary}}
\newtheorem{proposition}[definition]{\noindent {\bf Proposition}}
\newtheorem{lemma}[definition]{\noindent {\bf Lemma}}
\newtheorem{observation}[definition]{\noindent {\bf Observation}}
\newcommand{\restrict}[2]{#1\mspace{-3mu}\mathbin{\restriction}\mspace{-2mu} #2}
\begin{document}

\title[Bonato-Tardif, Thomass\'e, and Tyomkyn Conjectures]{An example of Tateno disproving conjectures of Bonato-Tardif, Thomasse, and Tyomkyn}

\author [D.Abdi]{Davoud Abdi Kalow}\address{
Mathematics \& Statistics Department, University of Calgary, Calgary, T2N1N4, Alberta, Canada  T2N1N4}
\email{davoud.abdikalow@ucalgary.ca}
\author[C.Laflamme]{Claude Laflamme*} 
\address{Mathematics \& Statistics Department, University of Calgary, Calgary, Alberta, Canada T2N 1N4}
\email{laflamme@ucalgary.ca} 
\thanks{*Supported by NSERC of Canada} 
\author[A.Tateno]{Atsushi Tateno} 
\author[R.Woodrow]{Robert Woodrow*} 
\address{Mathematics \& Statistics Department, University of Calgary, Calgary, Alberta, Canada T2N 1N4}
\email{woodrow@ucalgary.ca }  

\dedicatory{Dedicated to  our parents.}

\date{2022-05-25}


\keywords{trees, siblings}
\subjclass[2000]{ Relational Structures, Partially ordered sets and lattices (06A, 06B)}

\begin{abstract}  
In his 2008 thesis, Tateno claimed a counterexample to the Bonato-Tardif conjecture regarding the number of equimorphy classes of trees. 
In this paper we revisit Tateno's unpublished ideas to provide a rigorous exposition, constructing locally finite trees having an  arbitrary finite number of equimorphy classes;  an adaptation provides partial orders with a similar conclusion.
At the same time these examples also disprove conjectures by Thomass\'e and Tyomkyn.
\end{abstract}  

\maketitle

\section{Introduction} 
  
Two structures $R$ and $S$ are \emph{equimorphic},  denoted by $R \approx S$, when each embeds in the other; we may also say that one is 
a \emph{sibling} of the other. If $R$ is finite, there is just one sibling (up to isomorphy). The famous Cantor-Bernstein-Schroeder Theorem states that this is also the case for structures in a language with pure equality: if there is an injection from one set to another and vice-versa, then there is a bijection between these two sets. The same situation occurs in other structures such as vectors spaces, where embeddings are linear injective maps. But generally one cannot expect equimorphic structures to be necessarily  isomorphic: the rational numbers, considered as a linear order, has up to isomorphism continuum many siblings. It is thus a natural problem to understand the siblings of a given   structure, and as a first approach to count those siblings (up to isomorphy). 

Thus, let $\sib(R)$ be the number of siblings of $R$, these siblings being  counted up to isomorphism. Thomass\'e conjectured that $\sib(R)=1$, $\aleph_0$ or $2^{\aleph_0}$ for countable relational structures made of at most countably many relations  (Conjecture 2 in  \cite{thomasse}).   There is a special case of interest, namely whether  $\sib(R)=1$ or infinite for a relational structure of any cardinality. This was unsettled even in the case of locally finite trees, and is connected to the Bonato-Tardif conjecture which asserts that either all trees equimorphic to a given arbitrary tree $T$ are isomorphic, or else there are infinitely many pairwise non-isomoprohic trees equimorphic to $T$, also called the \emph{Tree Alternative Conjecture} (see \cite{bonato-tardif, bonato-al, tyomkyn}). 
Note that, as a binary relational structure, a ray has infinitely many siblings (add an arbitrary finite disconnected path), but a ray has no non-isomorphic sibling in the category of trees. The subtle connection between these conjectures is through the following observation by Hahn, Pouzet and Woodrow \cite{hahn}:  every sibling of a tree $T$ (as a binary relational structure, or graph) is a tree if and  only if $T\oplus 1$ (the graph obtained by adding an isolated vertex  to $T$) is not a sibling of $T$ (more generally, note that every sibling of a connected graph is connected, just in case $G\oplus 1$ is not a sibling). Hence, for a tree $T$ not equimorphic to $T\oplus 1$, the Bonato-Tardif conjecture (in the category of trees) and the special case of Thomass\'e's conjecture (in the category of relational structures) are equivalent.  

Bonato and Tardif \cite{bonato-tardif} proved their conjecture for rayless trees, and this was extended to rayless graphs by Bonato, Bruhn, Diestel and Spr\"ussel \cite{bonato-al}. It was also verified for the case of rooted trees by Tyomkyn \cite{tyomkyn}, and in addition made some progress towards the conjecture for locally finite trees. Tyomkyn made a first conjecture that if there exists a non-surjective embedding of a locally finite tree $T$, then $\sib(T) $ is infinite unless $T$ is a ray, a conjecture which immediately implies the Bonato-Tardif conjecture for locally finite trees. Tyomkyn further conjectured an apparently weaker version that if there exists a non-surjective embedding of a locally finite tree $T$, then $T$ has at least one non-isomorphic sibling unless $T$ is a ray. Laflamme, Pouzet and Sauer \cite{lps} later proved the  Bonato-Tardif conjecture for scattered trees, that is those trees not containing a subdivision of the binary tree. In fact they proved the result under the slightly more general notion of a stable tree. This is based on extensions of results of Polat and Sabidussi \cite{polat-sabidussi}, Halin \cite{halin, halin2, halin3}, and Tits \cite{tits} on automorphisms of trees.  Moreover they proved Tyomkyn's first conjecture holds for locally finite scattered trees.  Hamann \cite{hamann}, making use of the monoid of embeddings, deduced  the Bonato-Tardif conjecture for trees not satisfying two specific structural properties of that monoid. More recently, Abdi \cite{abdi} showed that a tree satisfying that first property is stable,  and therefore  the  Bonato-Tardif conjecture also holds in that case. 

In a parallel direction, Thomass\'e's conjecture has been fully verified for countable chains, and its special case also verified for all chains by Laflamme, Pouzet and Woodrow  \cite{lpw}, paving the way toward partial orders. A first step was made for direct sums of chains by Abdi \cite{abdi}, and after Hahn, Pouzet and Woodrow \cite{hahn} proved the special case of the conjecture in the special case of cographs, Abdi \cite{abdi} extended this result to closely related NE-free posets.   Another supporting indication came with the special case of the conjecture for a countable $\aleph_0$-categorical relational structure, proved by Laflamme, Pouzet, Sauer and Woodrow \cite{lpsw}, and extended by  Braunfeld et al \cite{braunfeld-laskowski}.

In this paper we revisit Tateno's unpublished ideas to provide a rigorous exposition, constructing locally finite trees having an  arbitrary finite number of equimorphy classes.  At the same time these examples disprove the above conjectures of Thomass\'e and Tyomkyn.

\begin{theorem}\label{thm:main1} 
For each non-zero $\sss \in \N$, there is a locally finite tree $\ST=\ST_{\sss}$ with exactly $\sss$ siblings (up to isomorphy), considered either as relational structures or trees. Moreover, for $\sss=1$, the tree is not a ray yet has a non-surjective embedding. \\
Thus the conjectures of Bonato-Tardif, Thomass\'e, and Tyomkyn regarding the sibling number of trees and relational structures are all false. 
\end{theorem}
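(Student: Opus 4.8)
The plan is to define $\ST_\sss$ explicitly and then reduce the determination of its siblings to a finite combinatorial problem. The tree $\ST_\sss$ is built around a ``skeleton'' — a carefully chosen locally finite tree (for the main family, essentially a ray, or a ray together with a rigidifying finite configuration) — to whose vertices one attaches finite rooted ``gadget'' trees drawn from a small fixed repertoire of shapes. The sequence in which these shapes occur along the skeleton carries all the information of $\ST_\sss$; for $\sss>1$ it follows an eventually periodic pattern whose period and whose finite ``core'' are tuned to the value $\sss$, and for $\sss=1$ one takes the corresponding degenerate pattern. Local finiteness is immediate since the skeleton is locally finite and every gadget is finite. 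The first task is to fix all of this data precisely and to isolate, by inspecting vertex degrees and the heights of the hanging finite subtrees, the combinatorial features of the skeleton — in particular that it is recognizable inside $\ST_\sss$.

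The heart of the argument is a \emph{rigidity lemma}: for any two trees $T,T'$ in this decorated family, every embedding $f\colon T\hookrightarrow T'$ must carry the skeleton of $T$ into the skeleton of $T'$, act there as a translation by a fixed amount $k\ge 0$, and send the gadget over each skeleton vertex into the gadget over its image. Because locally finite trees in general admit very flexible self-embeddings, the gadgets and the skeleton are engineered precisely so as to forbid the skeleton from being folded, rerouted into a gadget, or translated by a varying amount; proving the lemma is a case analysis on the possible images of the skeleton, controlled by finiteness of the gadgets together with a height/growth bookkeeping. Granting the lemma, $T\hookrightarrow T'$ reduces to a ``shift-domination'' relation between the two shape-sequences, and equimorphism to mutual shift-domination — a purely combinatorial relation on eventually periodic sequences.

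With this in hand I would carry out the count. One first describes the set $\Sigma$ of shape-sequences whose decorated tree is a sibling of $\ST_\sss$; the rigidity lemma turns membership in $\Sigma$ into finitely checkable shift conditions, and one then exhibits an isomorphism invariant on $\Sigma$ — the finite ``core'' that no self-embedding can absorb — taking exactly $\sss$ distinct values, each value realized by a tree that the rigidity lemma shows is not isomorphic to the others. This gives simultaneously $\sib(\ST_\sss)\ge\sss$ (the $\sss$ explicit witnesses) and $\sib(\ST_\sss)\le\sss$ (every sibling is isomorphic to one of them). For $\sss=1$ one checks separately that $\ST_1$ is not a ray — it has vertices of degree $\ge 3$ inside its gadgets — while a translation of the skeleton by its period, extended over the gadgets by a self-embedding of each shape, is an embedding of $\ST_1$ into itself whose image omits a gadget and hence is not surjective; this already contradicts both of Tyomkyn's conjectures. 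Finally, to obtain the clause ``considered either as relational structures or trees'', I would invoke the Hahn--Pouzet--Woodrow criterion quoted in the introduction: $\ST_\sss$ is connected and, by the rigidity lemma, $\ST_\sss\oplus 1$ does not embed into $\ST_\sss$, so $\ST_\sss\oplus 1$ is not a sibling of $\ST_\sss$ and every graph-sibling of $\ST_\sss$ is again a tree; thus the two sibling counts coincide. Choosing $\sss=1$ refutes Tyomkyn, and any $\sss$ with $2\le\sss<\aleph_0$ refutes both Bonato--Tardif (finitely many, but more than one, siblings) and Thomass\'e ($\sib(\ST_\sss)\notin\{1,\aleph_0,2^{\aleph_0}\}$).

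I expect the rigidity lemma to be the main obstacle: the whole construction lives or dies on designing the gadgets to be rigid enough to force the translation structure of all embeddings while still admitting the single non-surjective self-embedding required for $\sss=1$, and the analysis must rule out exactly the kind of ``spurious'' siblings — ones obtained by grafting a shortened or extra gadget near an exposed part of the skeleton — that a careless construction would allow. A secondary difficulty is establishing \emph{exactness} of the count rather than merely ``at least $\sss$'': this requires showing no unforeseen shape-sequence lands in $\Sigma$, which however becomes a concrete finite verification once the rigidity lemma has reduced everything to eventually periodic sequences.
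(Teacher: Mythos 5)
There is a genuine gap, and it is at the very foundation of your plan rather than in the details. You propose a skeleton that is ``essentially a ray'' (or a ray plus a finite rigidifying configuration) with finite rooted gadgets attached along an eventually periodic pattern. Any such tree is locally finite and has exactly one end: since every gadget is finite, every ray in the tree eventually runs along the skeleton, so the tree contains no subdivision of the binary tree and is therefore scattered. But for scattered trees the Bonato--Tardif alternative is a theorem (Laflamme--Pouzet--Sauer \cite{lps}): either all trees equimorphic to $T$ are isomorphic to $T$, or there are infinitely many pairwise non-isomorphic ones. Hence no decorated ray of your kind can have exactly $\sss$ siblings for $2\le\sss<\aleph_0$, no matter how cleverly the gadget shapes and the period are tuned; the intended rigidity lemma plus shift-domination count cannot come out to a finite number larger than one. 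The same obstruction kills your $\sss=1$ example: the cited work also establishes Tyomkyn's first conjecture for locally finite scattered trees, so a locally finite scattered tree that is not a ray and admits a non-surjective self-embedding automatically has infinitely many siblings, contradicting $\sib(\ST_1)=1$. This is precisely why the paper's construction is built on a highly branching, non-scattered skeleton: infinitely many copies of the labelled tree $(R,r)$ assembled along infinitely many double rays, with embeddings controlled not by ``translation rigidity'' but by label, sign and spin invariants (Lemmas \ref{similaritymap} and \ref{similarityemb}).

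A second, independent missing idea is the mechanism that makes the sibling count \emph{exact}. Even granting a rigidity lemma, a sibling of $\ST_\sss$ is a priori an arbitrary tree mutually embeddable with it, not a member of your decorated family, so ``membership in $\Sigma$ is finitely checkable'' does not follow; you need both an analogue of the paper's Lemma \ref{embfinite} (every self-embedding omits only finitely many vertices, so every sibling is a cofinite subtree differing only in finitely many type assignments) and, crucially, the diagonal step of the construction: at stage $k$ the paper enumerates representatives $S_{k,\ell}$ of all siblings of the finite-stage tree $\ST(k)$ and amalgamates them into all later stages, which is exactly what forces every sibling of the limit tree to be isomorphic to one of the prescribed $\ST_s$ (Proposition \ref{atmost2sib}). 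Your proposal has no counterpart of this coding, and without it ``at most $\sss$'' is not a finite verification over eventually periodic sequences but the main difficulty of the whole theorem. The parts of your sketch that do align with the paper are peripheral: the use of mutually non-embeddable finite gadgets to make invariants graph-theoretic, and the Hahn--Pouzet--Woodrow criterion (showing $\ST\oplus 1$ does not embed) to transfer the count from trees to relational structures.
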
 

This result has been a long time coming, but counterexamples had already been produced by Pouzet (see \cite{hahn}, \cite{lpw}) in the categories of directed graphs and simple graphs with loops:

\begin{figure}[ht]
\includegraphics[width=.8\textwidth]{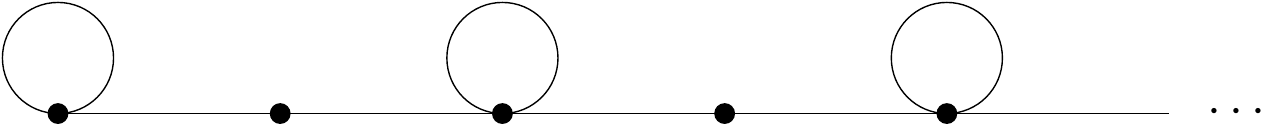}
\caption{In the category of {\em connected graphs with loops}, the above structure has exactly 2 siblings.}
\end{figure}

\begin{figure}[ht]
\includegraphics[width=.6\textwidth]{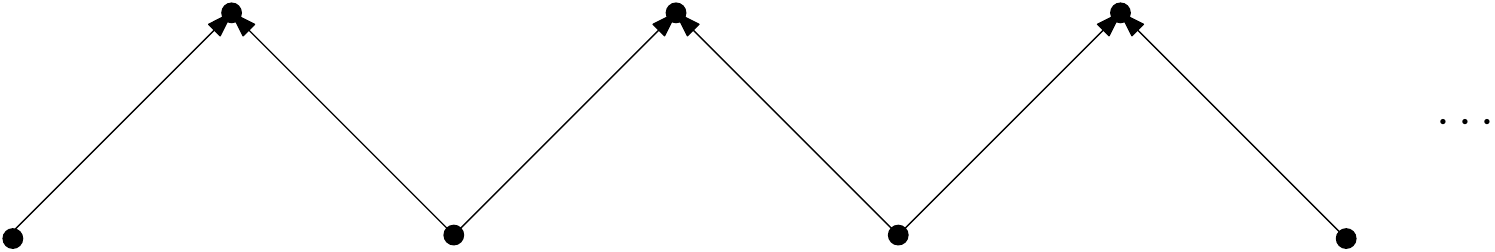}
\caption{Similarly in the category of {\em connected posets}, the one way infinite fence has exactly two siblings.}
\end{figure}

Indeed the above trees can be adapted to also provide partial orders with an arbitrary finite number of siblings.

\begin{theorem}\label{thm:main2}
For each non-zero $\sss \in \N$, there is a partial order $\SP$ with exactly $\sss$ siblings (up to isomorphy).
\end{theorem}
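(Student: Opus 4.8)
The plan is to transport the tree construction of Theorem~\ref{thm:main1} into the category of partial orders by way of a careful encoding. Given the locally finite tree $\ST_\sss$, one natural first attempt is to root it and read the tree order itself as a partial order; but this destroys the information that makes the sibling count finite, since embeddings of posets are far more flexible than embeddings of trees. Instead I would encode each edge of $\ST_\sss$ by a small rigid ``gadget'' poset — in the spirit of the infinite fence example pictured above, where an edge becomes a two-element zigzag — chosen so that (i) the only way a gadget embeds into the encoded structure is as a copy of a gadget sitting on an edge, and (ii) adjacency of edges in $\ST_\sss$ is faithfully recorded by the comparabilities between the corresponding gadgets. The height-2 fence pattern is the obvious candidate: vertices of $\ST_\sss$ become minimal elements, edges become maximal elements lying above exactly their two endpoints. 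One must check that in a locally finite tree this height-2 poset $\SP=\SP_\sss$ is itself locally finite and that its comparability graph essentially recovers $\ST_\sss$ (the line graph of a tree determines the tree, with the usual small exceptions, so this is where the ``tree'' hypothesis is genuinely used).

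The second step is to set up a bijection, or at least a surjection with controlled fibers, between the siblings of $\ST_\sss$ as a tree and the siblings of $\SP_\sss$ as a poset. For the easy direction, any tree-embedding $\ST_\sss \hookrightarrow \ST_\sss'$ induces, edge by edge, a poset-embedding $\SP_\sss \hookrightarrow \SP_{\ST_\sss'}$; so equimorphic trees give equimorphic encoded posets, and the encoding is injective on isomorphism types because $\SP_\sss$ remembers $\ST_\sss$. The harder direction is to show that \emph{every} sibling $\SQ$ of $\SP_\sss$ is isomorphic to $\SP_{\ST'}$ for some tree $\ST'$ equimorphic to $\ST_\sss$: one analyzes an arbitrary self-embedding and an arbitrary embedding $\SP_\sss\hookrightarrow\SQ\hookrightarrow\SP_\sss$, uses local finiteness together with the rigidity of the gadget to argue that minimal elements must go to minimal elements and maximal to maximal (away from a boundary that local finiteness controls), and thereby reads off a tree equimorphism. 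This reduces Theorem~\ref{thm:main2} to Theorem~\ref{thm:main1}, with the sibling count preserved exactly.

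I expect the main obstacle to be precisely this second, ``reflection'' step: ruling out pathological poset-embeddings that do not respect the height stratification — for instance an embedding that sends some maximal element of $\SP_\sss$ into a minimal element of $\SQ$, or that exploits long comparability chains the encoding did not intend to create. Handling this cleanly will likely require either strengthening the gadget (adding a distinguishing element of a third height, or a loop-free analogue of the ``spin''/``sign'' decorations already used on the trees) so that the three layers are pinned down combinatorially, or a direct induction on an exhaustion of $\SP_\sss$ by finite pieces, using local finiteness at each stage to force the embedding into the intended form. Once the stratification is secured, translating back and forth between tree-language and poset-language is routine, and the count ``exactly $\sss$'' carries over verbatim from Theorem~\ref{thm:main1}. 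For $\sss=1$ one additionally notes that the non-surjective tree embedding encodes to a non-surjective poset self-embedding, so the rigidity-type conjectures fail in the poset category as well.
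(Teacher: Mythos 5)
Your encoding is where the argument breaks, and the failure is not the one you anticipated. You propose the height-2 ``incidence'' poset: vertices of $\ST_\sss$ as minimal elements, one maximal element per edge sitting above its two endpoints. The stratification worry you raise is actually not a problem (an element above two distinct points must go to an edge element, and an element below an edge element must go to a vertex element, so poset embeddings of this structure do induce tree embeddings). The real problem is with \emph{siblings}, not embeddings: a sibling of $\SP_{\ST}$ is any poset sandwiched between $\psi(\SP_{\ST})$ and $\SP_{\ST}$ for a self-embedding $\psi$, and such a sandwich may contain an edge element while omitting one of its endpoint vertices --- a ``half edge'', i.e.\ a maximal element covering exactly one point. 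Such a poset is not the incidence poset of any tree, so your claim that every sibling of $\SP_\sss$ is of the form $\SP_{\ST'}$ is false, and worse, the count explodes: in the paper's tree $\ST$ the translation by $n$ along the central double ray is a self-embedding missing exactly $n$ gadget leaves, and including the $n$ corresponding edge elements (but not the missing leaves) produces, for every $n$, a sibling with exactly $n$ maximal elements covering a single point. The number of such elements is an isomorphism invariant, so these siblings are pairwise non-isomorphic, and $\sib(\SP_\sss)=\aleph_0$ no matter what $\sss$ is. Any encoding that represents an edge or a relation by an \emph{extra} element is exposed to this half-gadget phenomenon, so ``strengthening the gadget'' with a third level does not repair it.

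The paper avoids this by never adding elements at all: the partial order is placed directly on the vertex set of a (slightly modified) tree $\ST'$ --- fences on the label/type gadgets $PK(2n,m)$, a fence ordering of the double rays (with type assignments moved to even-indexed ray vertices and a blocking gadget on odd ones), and an orientation of edges inside each copy of $(R,r)$ dictated by the sign function. One then proves that order embeddings and graph embeddings are literally the same maps on the same ground set (Lemmas \ref{lem:graphposetmonoidR} and \ref{lem:graphposetmonoidD} and the argument following them), so the sandwiched substructures, and hence the siblings, correspond exactly and the count $\sss$ carries over. If you want to salvage your two-step plan, you would have to replace the incidence encoding by one in which the poset has the same underlying set as the tree, which is essentially the paper's construction.
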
 

We warmly thank Maurice Pouzet for bringing these problems to our attention, and for his generosity sharing his insight and expertise over the years on the subject. 
We also thank Mykhaylo Tyomkyn for making us aware of the claimed counterexample,  and Atsuhi Tateno for making his mansucript available to us and becoming a co-author. 

\section{Construction of the Locally Finite Trees} 
The strategy is  to build locally finite trees as a finite set of pairwise non-isomorphic siblings $\langle \ST_s: s<\sss \rangle$ for any fixed non-zero $\sss \in \N$, such that any sibling of $\ST=\ST_0$ (as a binary relational structure) is isomorphic to some $\ST_s$.  This yields a locally finite tree $\ST$ such that $\sib(\ST)=\sss$. The case $\sss=2$ will already disprove the conjectures of Bonato-Tardif (and hence Tyomkyn's first conjecture) and of Thomass\'e since we will show that $T \oplus 1$ does not embed in $T$. The special case $\sss=1$ will disprove Tyomkyn's second conjecture. The case $\sss>2$ is only for additional information, showing that any finite number can be the sibling number of some locally finite tree.
These trees will later be adapted to provide similar results for partial orders. 

The construction of each $\ST_s$ will be done in a similar manner as a countable union of trees, coding the countably many potential siblings within the trees  along the way. Moreover  $\ST_s \setminus \phi(\ST_s)$ will be finite for every embedding $\phi$, and hence all such differences will be captured after a finite stage of the construction, allowing to eventually show that all siblings have been accounted for. To facilitate the exposition, the construction will initially make use of several non graph properties  (labels, type assignments, sign and spin for example), but all will be eventually replaced by genuine graph properties. This means for example that embeddings will first be assumed to preserve the non graph properties, and to be clear will note those as \sembs; eventually it will be shown that these non graph properties are actually preserved by graph embeddings (or just ``embeddings'') alone; the most delicate case being through the Main Lemma \ref{similarityemb}.

\subsection{Rooted tree $\SR=(R, r)$}\label{rootedtree}

We begin by constructing a rooted tree $\SR=(R, r)$ that will be used repeatedly throughout the construction. We will first develop local properties of that tree, and then later extend them to each $\ST_s$.

The tree $\SR$ is built using a labelling on the vertices $\lab:R\rightarrow \N$ to guide the construction.  We first declare $\lab(r) = 0$, and then we construct the tree inductively under the following rules:
\begin{itemize}
\item If $\lab(v) = 0$, then $v$ has exactly two neighbours of label 1. 
\item If $\lab(v) \neq 0$, then $v$ has exactly three neighbours labelled $\lab(v)-1, \lab(v), \lab(v)+1$.
\end{itemize}

\begin{figure}[ht]
\includegraphics[width=\textwidth]{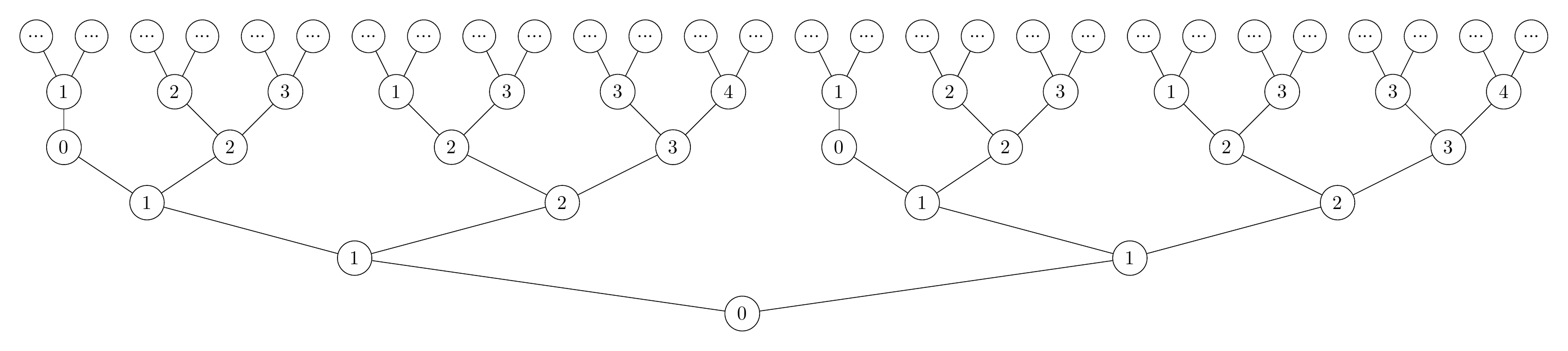}\label{rootedr}
\caption{Rooted Tree $\SR=(R,r)$ and the vertex labelling}
\end{figure}

We denote by $R_0$ the 0-labelled vertices of $\SR=(R,r)$ and often call them {\em tree vertices}. Note that these are exactly the vertices of degree 2 in $\SR$. Now that the tree has been constructed, one notices that the labelling of a vertex can be recovered  from the tree itself as the (graph)  distance to the nearest tree vertex (vertex of degree 2).

\begin{observation}\label{reconstructlabels}
For any vertex $v \in R$,  $\lab(v)$ is the distance to the nearest vertex of degree 2. 
\end{observation}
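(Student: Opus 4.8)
The plan is to prove the two inequalities $\lab(v) \le d(v, R_0)$ and $\lab(v) \ge d(v, R_0)$ separately, where $d$ denotes graph distance and $R_0$ is the set of degree-$2$ vertices (equivalently, by the construction rules, exactly the $0$-labelled vertices). For the upper bound, I would argue by induction on $\lab(v)$: if $\lab(v)=0$ there is nothing to prove, and if $\lab(v)=n>0$, then by the second construction rule $v$ has a neighbour $w$ with $\lab(w)=n-1$, so by induction $d(w,R_0)\le n-1$ and hence $d(v,R_0)\le n$. This shows every vertex is within distance $\lab(v)$ of a tree vertex.

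For the lower bound, the key point is that the labelling changes by at most $1$ along any edge: inspecting both construction rules, adjacent vertices have labels differing by at most $1$ (a $0$-labelled vertex has neighbours of label $1$; an $n$-labelled vertex with $n>0$ has neighbours of labels $n-1,n,n+1$). Consequently, along any path from $v$ to a vertex $u\in R_0$ of length $d(v,R_0)$, the label drops from $\lab(v)$ to $0$ in steps of size at most $1$, forcing the path to have length at least $\lab(v)$; that is, $d(v,R_0)\ge \lab(v)$. Combining the two inequalities gives $\lab(v)=d(v,R_0)$, the distance to the nearest degree-$2$ vertex.

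The only mild subtlety — and the step I would be most careful about — is the identification of $R_0$ with the set of degree-$2$ vertices, which is needed so that "nearest tree vertex" and "nearest degree-$2$ vertex" really coincide; this is immediate from the construction rules, since a $0$-labelled vertex has exactly two neighbours while every other vertex has exactly three, but it should be stated explicitly. Everything else is a routine induction together with the Lipschitz property of the labelling along edges, so no serious obstacle is anticipated.
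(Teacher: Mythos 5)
Your proposal is correct and follows essentially the same route as the paper: the paper also obtains the upper bound by following a path of strictly decreasing labels from $v$ to a tree vertex (your induction is just a formalization of this) and the lower bound from the fact that labels change by at most $1$ along an edge, together with the identification of tree vertices as exactly the degree-$2$ vertices.
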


\begin{proof}
Tree vertices are exactly those of degree 2, all other vertices of label $\ell>0$ have  degree $3$. Now any vertex of label $\ell$ has a path of length $\ell$ with decreasing labels to a tree vertex, hence its distance to the nearest tree vertex is at most $\ell$. On the other hand labels decrease by at most $1$, therefore no tree vertex can be any closer. 
\end{proof}

Thus the labels are a graph property and any (graph) embedding of $\SR$ preserves labels. Yet, we will be joining several copies of $\SR$ and adding vertices to the tree, so we want to ensure that these labels can be recovered from the eventual graph structure. We do so  by encoding these labelled vertices using finite trees (gadgets) rooted at those vertices as follows. First consider the bipartite graph $K_{1,m}=(u,V)$ and call $PK(n,m)$ the finite tree formed by connecting the vertex $u$ to the end leaf of a path of length $n$; then attach the root of a copy of $PK(2\ell+6,2)$ to any vertex $v$ with $\lab(v)=\ell$. 
\begin{figure}[ht]
\includegraphics[width=.4\textwidth]{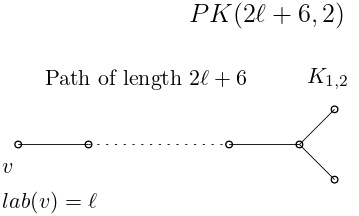}\label{pkn}
\caption{The gadget $PK(2\ell+6,2)$: finite rooted tree used to encode label $\ell$.}
\end{figure}

What is important here is that these gadgets are pairwise non-embeddable as {\em rooted trees} (mapping roots to roots) for different values of $\ell$, and thus play the graph theoretic role of the labels. From this point we want to make the label a graph property by attaching these finite gadgets to vertices.   Technically, this results in a tree extension $(R^a,r)$.   Note that any graph embedding $\phi$ of the resulting tree must take vertices in $R$ to vertices in $R$ and it must map the label gadget attached at a vertex $v$ of $R$ to the gadget at $\phi(v)$.  Since distinct gadgets do not embed in each other they must be the same.   So in fact $\phi$ must map $R^a$ onto itself, mapping the label gadgets to label gadgets.    Having noted this important property we abuse the notation using $(R,r)$ for $(R^a,r)$.

We will henceforth for notaional simplicity continue to use the labels $\lab(v)$ themselves, with the understanding that the labels are a graph property and preserved by embeddings. 

In anticipation of the construction of $\ST$ (and each $\ST_s$), we note that we will eventually associate a double ray to each tree vertex in a copy of $(R,r)$, and amalgamate the ray and the copy by identifying a single vertex of the ray and the tree vertex. The reason for $\ell +3$ above is simply that we will later use a small versions to code type assignments on those ray vertices.

Another important observation is that there is a (label preserving) embedding $\phi:\SR=(R,r) \rightarrow (R,v)$ sending $r$ to $v$ for any tree vertex $v$; moreover this is an automorphism since all embeddings of $\SR$ are surjective. This remains true even when we make the labels explicitly a graph property by adding the finite tree labels.

\begin{observation}\label{symmetry}
All embeddings of $\SR$ are surjective, and $(R,r)$ is  isomorphic to $(R,v)$ (as rooted trees) for any tree vertex $v$.
\end{observation}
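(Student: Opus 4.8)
The plan is to establish both claims at once by exhibiting a single map. First I would define, for a fixed tree vertex $v$ (that is, $\lab(v)=0$), a map $\phi\colon R\to R$ by sending $r\mapsto v$ and then extending recursively: since $r$ has exactly two neighbours, both of label $1$, and $v$ likewise has exactly two neighbours of label $1$, we may match them up by any bijection; and once a vertex $w$ of label $\ell$ is matched to a vertex $\phi(w)$ of the same label $\ell$, the local rules guarantee that $w$ and $\phi(w)$ have neighbourhoods with identical label-multisets (namely $\{1,1\}$ if $\ell=0$, and $\{\ell-1,\ell,\ell+1\}$ if $\ell>0$), so we can extend $\phi$ bijectively on neighbourhoods, matching labels. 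Working outward by distance from $r$, this yields a label-preserving graph isomorphism $R\to R$ with $r\mapsto v$; adding the finite label gadgets changes nothing, since the gadget at any vertex is determined by its label and we are matching labels. This proves $(R,r)\cong(R,v)$ as rooted trees.

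Next I would prove the surjectivity claim. Let $\psi\colon\SR\to\SR$ be any embedding. By Observation \ref{reconstructlabels} (together with the gadget discussion identifying labels as a graph property, so that $\psi$ maps $R$ into $R$ preserving labels and carries gadgets to gadgets), $\psi$ preserves labels. In particular $\psi(r)$ is a vertex of label $0$, hence a tree vertex $v$. Composing with the isomorphism $(R,v)\cong(R,r)$ built in the previous paragraph, we may assume $\psi(r)=r$. Now I would argue $\psi$ is surjective by induction on distance from $r$: $\psi$ restricted to the ball of radius $k$ around $r$ is already forced to be onto the ball of radius $k$, because $\psi$ preserves labels and each vertex $w$ in that ball has, by the construction rules, a fully prescribed label-multiset on its neighbours, all of which $\psi$ must hit (an injection from a finite set to itself respecting the partition by labels, where the relevant blocks have prescribed finite sizes $1$ or $2$, is a bijection). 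Letting $k\to\infty$ gives $\psi(R)=R$.

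The only subtlety — and the step I would be most careful about — is the inductive surjectivity argument: one must check that $\psi$ mapping the radius-$k$ ball into the radius-$k$ ball is genuinely forced, i.e.\ that $\psi$ cannot "escape outward" and thereby fail to cover some vertex near $r$. This is exactly where label preservation does the work: a vertex at distance $k$ from $r$ carries label $\le k$ (labels decrease by at most $1$ along the decreasing path to $r$, and $r$ has label $0$), and conversely the unique path realizing the label is the path toward $r$; since $\psi$ fixes $r$ and preserves both adjacency and labels, it maps that path to the analogous path, keeping the image within the radius-$k$ ball. Given that, the finite counting argument closes the induction, and letting the gadgets ride along is automatic. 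Everything else is routine bookkeeping.
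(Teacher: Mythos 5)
Your proof is correct: the paper states Observation \ref{symmetry} without proof, immediately after asserting that graph embeddings of $\SR$ preserve labels, and your level-by-level construction of the rooted isomorphism $(R,r)\cong(R,v)$ (matching neighbours by label, which determines degree and the gadget) together with the sphere-by-sphere counting argument for surjectivity after normalizing $\psi(r)=r$ is exactly the natural argument that fills this in. One clause is inaccurate but harmless: the label of a vertex need not be realized along the path toward $r$; what you actually need (and implicitly use) is only that tree embeddings preserve distances, so once $\psi(r)=r$ the image of the $r$--$w$ path keeps $\psi(w)$ inside the radius-$k$ ball, after which your counting closes the induction.
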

In fact $(R,r)$ is so symmetric that one main point of the construction of $\ST$ is to insert obstructions to control its graph embeddings and as a result the number of siblings. 

The following notions of colour and height for tree vertices will be used in the inductive construction of each $\ST_s$. We call a pair of adjacent vertices in $\SR$ {\em consecutive} if they have the same label. Note that if a tree vertex $v \in R_0$ is different from $r$, then the path $P_{r,v}$ from $r$ to $v$ must contain at least one such a consecutive pair. 

\begin{definition}\label{colour}
For any tree vertex  $v \in R_0$, define $\col_v:R_0 \rightarrow \N$, the colour with respect to $v$,  by 
\[\col_v(u)=\lab(w), \mbox{ for } u \neq v,\] 
where $w$ is a vertex from the last consecutive pair in $P_{v,u}$. \\
For convenience let $\col_v(v)=0$. 
\end{definition}
Thus, since labels are preserved by graph embeddings, we have that $ \col_r(v)=\col_{\phi(r)}(\phi(v))$ for any graph embedding $\phi:R \rightarrow R$ and tree vertex $v$.
However more is true, and this kind of argument will play a central role throughout. 

\begin{lemma}\label{colpreserv}
For any  tree vertices $u, v \in R_0$, $\col_u(w)=\col_v(w)$ for all but finitely many $w \in R_0$.  \\
The only possible exceptions are tree vertices on paths starting from a vertex in $P_{u,v}$ and with strictly decreasing labels. 
\end{lemma}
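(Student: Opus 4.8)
The plan is to analyze how the colour function $\col_v$ depends on the base tree vertex $v$ by comparing the witnessing paths $P_{v,w}$ and $P_{u,w}$ for an arbitrary tree vertex $w$. The key structural fact is that in a tree the three paths $P_{u,w}$, $P_{v,w}$, and $P_{u,v}$ share a common ``median'' vertex $m$, and the paths $P_{u,w}$ and $P_{v,w}$ agree from $m$ onward to $w$. Consequently, the last consecutive pair on $P_{u,w}$ and the last consecutive pair on $P_{v,w}$ coincide --- and hence $\col_u(w) = \col_v(w)$ --- as soon as there is at least one consecutive pair on the common final segment of the path strictly between $m$ and $w$; the only way the two colours can differ is if all consecutive pairs on $P_{u,w}$ (respectively $P_{v,w}$) lie on the portion before $m$, i.e. inside $P_{u,v}$ or on the branch from $P_{u,v}$ to $m$.

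First I would reduce to the case where $u$ and $v$ are adjacent tree vertices: since $R_0$ with its induced path structure is connected, $u$ and $v$ are joined by a finite path through finitely many intermediate tree vertices, and ``equal for all but finitely many $w$'' is transitive, so it suffices to handle one step. Next, fixing adjacent $u,v \in R_0$ and an arbitrary $w \in R_0$, I would locate the median $m$ of $u, v, w$ and write $P_{u,w} = P_{u,m} \cup P_{m,w}$ and similarly for $v$. If the final segment $P_{m,w}$ contains a consecutive pair, then the last consecutive pair on both $P_{u,w}$ and $P_{v,w}$ lies on $P_{m,w}$ and is the same pair, giving $\col_u(w) = \col_v(w)$. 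So the exceptional $w$ are exactly those for which $P_{m,w}$ has \emph{no} consecutive pair, which by the labelling rules (labels change by exactly $1$ along every edge, except the two edges at a $0$-labelled vertex which also change by $1$ in absolute value) forces the labels along $P_{m,w}$ to be strictly monotone --- and since they cannot decrease past a tree vertex, strictly \emph{decreasing} from $m$'s side, meaning $w$ is reached from $m$ by a path of strictly decreasing labels. Moreover $m$ itself lies on $P_{u,v}$ or on a branch attached to it; when $u,v$ are adjacent, $m \in \{u,v\}$ or is a vertex on a short detour, so $m$ ranges over a bounded set. Finally I would observe that from any fixed vertex $m$ there are only finitely many tree vertices reachable by a strictly label-decreasing path (each such path has length at most $\lab(m)$, and the tree is locally finite), so altogether only finitely many $w$ are exceptional; reverting to general $u,v$, the exceptional set is a finite union of such sets, hence finite, and lies on paths of strictly decreasing labels emanating from vertices of $P_{u,v}$, as claimed.

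The main obstacle I expect is the careful bookkeeping around the median vertex and the precise identification of the exceptional set: one must check that a consecutive pair on the shared terminal segment really does override any consecutive pairs occurring earlier (this is immediate from the definition, which takes the \emph{last} consecutive pair), and one must rule out subtle cases where the median coincides with $w$ or with $u$ or $v$, or where the ``strictly decreasing'' path is trivial. The labelling rules do all the real work here --- because labels move by exactly one along each edge and bottom out only at tree vertices --- so the argument is essentially a finiteness count once the median decomposition is set up correctly.
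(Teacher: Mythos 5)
Your proposal is correct and follows essentially the same route as the paper's proof: decompose via the median $w'\in P_{u,v}$ so that $P_{u,w}$ and $P_{v,w}$ share the terminal segment $P_{w',w}$, note that any consecutive pair on that shared segment forces $\col_u(w)=\col_v(w)$, and observe that otherwise $P_{w',w}$ has strictly decreasing labels, leaving only finitely many exceptional $w$ attached to the finite path $P_{u,v}$. Your preliminary reduction to ``adjacent'' tree vertices is unnecessary (the paper runs the median argument directly for arbitrary $u,v$), but it is harmless and does not change the substance of the argument.
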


\begin{proof}
We may assume that $w \notin P_{u,v}$ since those are part of the exceptional vertices.

\begin{figure}[ht]
\includegraphics[width=.6\textwidth]{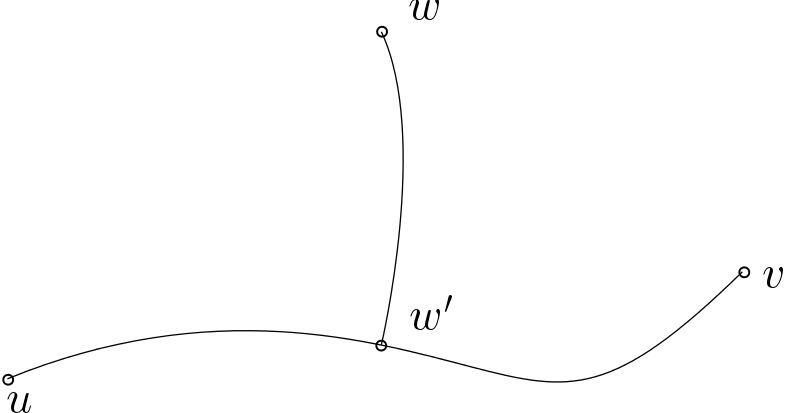}
\caption{Lemma \ref{colpreserv}: $\col_u(w)=\col_v(w)$ for all but finitely many $w \in R_0$.}
\end{figure}

Since $R$ is a tree, $P_{u,w} \cap P_{v,w} = P_{w',w}$ for some unique $w' \in P_{u,v}$.  Now if $P_{w',w}$ contains a consecutive pair, then  $\col_u(w)=\col_v(w)$ as desired. Otherwise $P_{w',w}$ must be a path with strictly decreasing labels starting with $\lab(w')$. But then there is only one such possible tree vertex $w$ for each (not necessarily tree) vertex $w'$ in the finite path $P_{u,v}$. 
\end{proof}

\begin{corollary}\label{colembpreserv}
Let $\phi$ be an embedding of $(R,r)$. Then $\col_r(v)=\col_{\phi(r)}(v) = \col_{\phi(r)}(\phi(v))$ for all but finitely many $v \in R_0$. \\
The only possible exceptions are vertices originating from a path starting from $P_{r,\phi(r)}$ with strictly decreasing labels.
\end{corollary}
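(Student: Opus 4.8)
The plan is to derive Corollary \ref{colembpreserv} directly from Lemma \ref{colpreserv} together with Observation \ref{symmetry} and the fact that labels (and hence colours, by Definition \ref{colour}) are preserved by graph embeddings. The key point is that an embedding $\phi$ of $(R,r)$ is by Observation \ref{symmetry} an automorphism of $R$ with $\phi(r)$ a tree vertex, so each of the three quantities $\col_r(v)$, $\col_{\phi(r)}(v)$, $\col_{\phi(r)}(\phi(v))$ is well-defined on $R_0$, and we need to compare them for all but finitely many $v\in R_0$.

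First I would establish $\col_{\phi(r)}(\phi(v)) = \col_r(v)$ for \emph{every} tree vertex $v$. This is the already-noted observation preceding Lemma \ref{colpreserv}: since $\phi$ preserves labels, it maps the path $P_{r,v}$ isometrically onto $P_{\phi(r),\phi(v)}$ preserving labels, so it carries the last consecutive pair in $P_{r,v}$ to the last consecutive pair in $P_{\phi(r),\phi(v)}$; applying $\lab(\phi(w))=\lab(w)$ to the chosen vertex $w$ of that pair gives the equality. (One should also note the degenerate case $v=r$, where both sides are $0$ by the convention $\col_v(v)=0$, using that $\phi$ maps $r$ to $\phi(r)$.)

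Next I would apply Lemma \ref{colpreserv} with the two tree vertices $u := r$ and $v := \phi(r)$: it yields $\col_r(w) = \col_{\phi(r)}(w)$ for all but finitely many $w\in R_0$, with the only possible exceptions being tree vertices lying on paths that start from a vertex of $P_{r,\phi(r)}$ and have strictly decreasing labels. Combining this with the equality $\col_{\phi(r)}(\phi(v))=\col_r(v)$ from the previous step gives $\col_r(v) = \col_{\phi(r)}(v) = \col_{\phi(r)}(\phi(v))$ for all but finitely many $v\in R_0$, with exceptions as described. I do not anticipate a genuine obstacle here; the only thing requiring a little care is to make sure the exceptional set is stated correctly — it is finite because $P_{r,\phi(r)}$ is a finite path (as $\SR$ is locally finite and $r,\phi(r)$ are at finite distance) and, as in the proof of Lemma \ref{colpreserv}, each vertex on that path spawns at most one strictly-decreasing-label branch contributing a single exceptional tree vertex. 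I would also remark that, since $\phi$ restricted to $R_0$ is a bijection, one may equivalently phrase the exceptional set in terms of $v$ or $\phi(v)$, which is why the statement lists the exceptions as "vertices originating from" such a path.
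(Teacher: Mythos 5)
Your proposal is correct and follows essentially the same route as the paper: apply Lemma \ref{colpreserv} with the pair $r,\phi(r)$ to get $\col_r(v)=\col_{\phi(r)}(v)$ off the exceptional decreasing-label paths, and combine it with the label-preservation remark $\col_r(v)=\col_{\phi(r)}(\phi(v))$. The extra detail you supply (the image of the last consecutive pair, the finiteness of $P_{r,\phi(r)}$) is just an expansion of what the paper cites implicitly.
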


\begin{proof}
By Lemma \ref{colpreserv}, $\col_r(v)=\col_{\phi(r)}(v)$, with only possible exceptions being vertices originating from a path starting from $P_{r,\phi(r)}$ with strictly decreasing labels. 
In addition, as remarked above,  $\col_r(v)=\col_{\phi(r)}(\phi(v))$ for any embedding $\phi$ and tree vertex $v$.
\end{proof}

The height of any (not necessarily tree) vertex $w$ with respect to another arbitrary vertex $v$ is the maximum label encountered in the path from $v$ to $w$. Again the construction will be done in stages determined by such maximum height. 

\begin{definition}\label{height}
For $v \in R$, define $\hth_v: R \rightarrow \N$, the height with respect to $v$,  by \[\hth_v(w)=max\{ \lab(w'): w' \in P_{v,w}\}.\] 
\end{definition}

Note that $\col_v(u) \leq \hth_v(u)$ for any tree vertices $v$ and $u$. Moreover $\col_v(u) = \hth_v(u)$ means that the label of the last consecutive pair in $P_{v,u}$ is the maximum label appearing among all vertices of $P_{v,u}$; this is actually the situation that will be used in the inductive construction, and for a given fixed vertex $v$ such a vertex $u$ will be called a {\em target vertex}.

It may also be worth noting that the corresponding Proposition  \ref{colpreserv} is clearly not true for the height value, meaning that there can be infinitely many (even tree) vertices $w$ such that $\hth_v(w) \neq \hth_u(w)$. However the following observation will be useful.

\begin{observation}\label{hthpreserv}
For any  vertices $u, v \in R$, $\hth_u(w)=\hth_v(w)$ for all $w \in R$ such that $min\{\hth_u(w), \hth_v(w)\}  \geq \hth_u(v)$.
\end{observation}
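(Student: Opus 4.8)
The plan is to exploit the tree structure of $R$ through the median (Steiner point) of the three vertices $u$, $v$, $w$, and then to reduce the claim to a one-line case distinction according to where the maximal label along $P_{v,w}$ is attained.

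First I would record that the hypothesis is symmetric in $u$ and $v$: the quantity $\min\{\hth_u(w),\hth_v(w)\}$ is manifestly symmetric, and $\hth_u(v)=\hth_v(u)$ because $P_{u,v}=P_{v,u}$. Hence, after possibly interchanging $u$ and $v$, we may assume $\hth_u(w)\le \hth_v(w)$, and it suffices to prove the reverse inequality $\hth_v(w)\le \hth_u(w)$.

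Next, let $z$ be the unique vertex of $R$ lying on all three of the paths $P_{u,v}$, $P_{u,w}$, $P_{v,w}$ (the median of $u,v,w$ in the tree $R$), so that $P_{u,w}=P_{u,z}\cup P_{z,w}$, $P_{v,w}=P_{v,z}\cup P_{z,w}$, and $P_{u,v}=P_{u,z}\cup P_{z,v}$. Choose $p\in P_{v,w}$ with $\lab(p)=\hth_v(w)$, and split into two cases. If $p\in P_{z,w}$, then $p\in P_{u,z}\cup P_{z,w}=P_{u,w}$, so $\hth_v(w)=\lab(p)\le\hth_u(w)$, which together with $\hth_u(w)\le\hth_v(w)$ forces equality. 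If instead $p\in P_{v,z}=P_{z,v}\subseteq P_{u,v}$, then $\hth_v(w)=\lab(p)\le\hth_u(v)\le\min\{\hth_u(w),\hth_v(w)\}\le\hth_u(w)$, and again $\hth_u(w)=\hth_v(w)$.

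The only step that uses the hypothesis $\min\{\hth_u(w),\hth_v(w)\}\ge\hth_u(v)$ is the second case, and that is precisely the point where some care is needed; I do not expect a genuine obstacle. Degenerate configurations (for instance $z\in\{u,v,w\}$, or $w$ lying on $P_{u,v}$) are handled by the same argument, since the relevant subpaths are then either trivial or contained one in the other.
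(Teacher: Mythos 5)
Your proof is correct and follows essentially the same route as the paper: the key fact is the ultrametric-style inequality $\hth_v(w)\le\max\{\hth_u(w),\hth_u(v)\}$, which the hypothesis collapses to $\hth_v(w)\le\hth_u(w)$, with symmetry in $u,v$ giving the converse. Your median/case analysis simply makes explicit the tree-geometric justification of that inequality, which the paper leaves implicit.
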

 
\begin{proof}
Under the given hypothesis we have 
\[ \hth_u(w) \leq max\{\hth_v(w),\hth_u(v)\} \leq  \hth_v(w). \]
By symmetry we have $\hth_v(w) \leq \hth_u(w)$ as well.
\end{proof}

Now for any tree vertex $v$, there is also an automorphism of $\SR=(R,v)$ fixing $v$ and interchanging its two neighbourhoods. In order to keep the number of siblings under control, we will want to prevent not only such automorphisms, but also embeddings mapping one neighbourhood into the other.   For this we will define a {\em sign function} (with respect to $v$) $\sign_v$ which takes values $+1$ on one neighbourhood of $v$, and $-1$ on the other neighbourhood, and eventually code these values as graph properties so they are preserved by embeddings. Moreover, within a neighbourhood, we will similarly want to control the neighbourhoods of each vertex and we will define a {\em spin function}  (with respect to $v$) for that purpose. The following will define these functions simultaneously, first by defining $\sign_r$, $\spin_r$, then $\sign_v$ and finally $\spin_v$ for all other tree vertices $v$.

We write $R^v_0= R_0\setminus\{v\}$ since these functions are not defined on the vertex they are based on. We will first define 

\begin{definition}\label{spinsign}
First arbitrarily assign  $\sign_r(u)=+1$ to every vertex $u$ in one neighbourhood of $r$, and $\sign_r(u)=-1$ to every vertex $u$ in the other neighbourhood of $r$.
\begin{itemize}
\item Let $v$ be a tree vertex and $\sign_v$ a sign function which assigns values $\pm 1$ to each neighbourhood of $v$. \\Then 
define $\spin_v: R^v_0 \rightarrow \pm 1$,  the spin with respect to $v$,  by:
\[ \spin_v(u)=\sign_v(u)(-1)^{cp+tv} \]
where 
\begin{itemize}
\item$cp=P^{cp}_{v,u}$ is the number of consecutive pairs in $P_{v,u}$
\item$tv=P^{tv}_{v,u}$ is the number of tree vertices in $P_{v,u}$. 
\end{itemize}

\item Let $v$ be a tree vertex, then define $\sign_v: R^v_0 \rightarrow \pm 1$ by:
\begin{itemize}
\item $\sign_v(u)= \spin_r(v)$ if $u$ and $r$ belong to the same neighbourhood of $v$, and 
\item $\sign_v(u)= -\spin_r(v)$  otherwise.
\end{itemize}
\end{itemize}
\end{definition}

\begin{figure}[ht]
\fbox{\includegraphics[width=\textwidth]{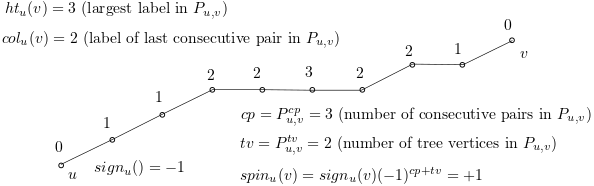}}
\caption{Definitions \ref{colour}, \ref{height} and \ref{spinsign}: Label, colour, height, sign and spin example.}
\end{figure}

Note that indeed  $\sign_v$ assigns the value $+1$ on one of its neighbourhood and $-1$ on the other. Moreover, the spin with respect to the root $r$ can be recovered from the sign at any other tree vertex, and hence also the spin at any other tree vertex as follows.

\begin{lemma}\label{spinpreserv}
Let $v \in R_0$. Then:
\begin{enumerate}
\item $\spin_v(r) = \sign_r(v)$ for any $v \in  R^r_0$. 
\item $\spin_v(w) = \spin_r(w)$ for all  $w \in (R^r_0 \cap R^v_0)  \setminus P_{r,v}$, and 
\item $\spin_v(w) = -\spin_r(w)$ for all  $w \in  P_{r,v} \setminus \{r,v\}$. 
\end{enumerate}
\end{lemma}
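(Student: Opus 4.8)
The plan is to verify the three clauses directly from Definition \ref{spinsign}, exploiting the multiplicative structure of the parity factor $(-1)^{cp+tv}$ along paths in the tree. The basic tool is that for three tree vertices, the path counts are additive up to the overlap: if $w', w'' \in P_{v,u}$ lie in that order, then the number of consecutive pairs and the number of tree vertices in $P_{v,u}$ split as a sum of the corresponding counts on the two sub-paths, with the shared vertex $w''$ (or shared edge) counted in a controlled way. I would isolate this bookkeeping once at the start so that each clause becomes a short parity computation.

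First I would prove clause (1). By definition $\spin_v(r) = \sign_v(r)\,(-1)^{cp+tv}$ where $cp, tv$ are the consecutive-pair and tree-vertex counts of $P_{v,r}$. Since $r$ is trivially in the same neighbourhood of $v$ as itself, $\sign_v(r) = \spin_r(v)$, and $\spin_r(v) = \sign_r(v)\,(-1)^{cp'+tv'}$ where $cp', tv'$ are the counts for $P_{r,v}$. But $P_{v,r} = P_{r,v}$, so $cp = cp'$ and $tv = tv'$, and the two parity factors cancel, leaving $\spin_v(r) = \sign_r(v)$, as claimed. (One must double check the edge convention: $P_{v,u}$ is a path between two tree vertices so it contains at least two tree vertices, namely its endpoints, and the count $tv$ includes both; this is symmetric in $v$ and $r$, so nothing changes.)

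Next, clauses (2) and (3) are handled together. Fix $w$ and let $w'$ be the unique vertex of $P_{r,v}$ with $P_{r,w} \cap P_{v,w} = P_{w',w}$ (as in the proof of Lemma \ref{colpreserv}). Then $P_{r,w}$ is the concatenation of $P_{r,w'}$ and $P_{w',w}$, and similarly $P_{v,w}$ is the concatenation of $P_{v,w'}$ and $P_{w',w}$; also $P_{r,v}$ is the concatenation of $P_{r,w'}$ and $P_{w',v}$. Writing $(a,b)$ for the (consecutive-pair, tree-vertex) count of a path and accounting for the shared endpoint $w'$, one gets that the parity of $cp+tv$ for $P_{r,w}$ differs from that for $P_{v,w}$ exactly by the parity of $cp+tv$ for the "detour" $P_{r,w'} \cup P_{w',v} = P_{r,v}$, i.e. by $(-1)^{P^{cp}_{r,v} + P^{tv}_{r,v}}$, up to a correction coming from whether $w'$ is a tree vertex and whether the two edges at $w'$ on the detour form consecutive pairs. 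Combining this with $\sign_v(w) = \pm\spin_r(v)$ according to whether $w$ is on the $r$-side of $v$ or not — equivalently, whether $w' = v$ or $w' \ne v$, i.e. whether $w \in P_{r,v}$ or not — I expect the two cases to fall out: when $w \notin P_{r,v}$ (so $w' \ne v$) the corrections combine to give $\spin_v(w) = \spin_r(w)$, and when $w \in P_{r,v}\setminus\{r,v\}$ (so $w' = w$) an extra sign appears, giving $\spin_v(w) = -\spin_r(w)$.

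The main obstacle will be getting the parity bookkeeping at the branch vertex $w'$ exactly right: one must be careful about whether $w'$ itself is a tree vertex, whether the edge of $P_{r,v}$ at $w'$ and the edge of $P_{w',w}$ at $w'$ are labelled equally (forming a consecutive pair counted in one path but not another), and the off-by-one in how shared endpoints are tallied in $cp$ versus $tv$. I would manage this by treating $w'$ as a tree vertex versus non-tree vertex in two sub-cases, or better, by reducing everything to the already-proved clause (1) applied at $w'$ together with the observation that $\spin$ restricted to either side of a tree vertex depends only on the path beyond the branch point — effectively a cocycle identity $\spin_v(w) = \spin_v(w')\cdot\spin_{w'}(w)\cdot(\text{correction})$ that lets the shared segment $P_{w',w}$ cancel out cleanly. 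Once that identity is in hand, clauses (2) and (3) are immediate from clause (1) and the definition of $\sign_v$.
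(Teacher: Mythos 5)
Your clause (1) is exactly the paper's argument and is fine. For clauses (2) and (3), however, what you give is a plan rather than a proof: the parity bookkeeping at the branch vertex $w'$, which you yourself flag as ``the main obstacle'', is precisely the content of the paper's proof, and you leave it unresolved (``I expect the two cases to fall out''). Moreover, the case analysis you sketch rests on a false equivalence: you assert that $w \notin P_{r,v}$ forces $w' \neq v$, but it does not --- if $v$ lies on $P_{r,w}$ and $w$ lies beyond $v$, then $w \notin P_{r,v}$ while $w' = v$. This is a genuine (indeed the first) sub-case of clause (2) in the paper, and it is exactly where the two corrections you must track --- the flip $\sign_v(w) = -\spin_r(v)$ because $w$ and $r$ lie in opposite neighbourhoods of $v$, and the $-1$ in the tree-vertex count because $v$ is counted in both $P_{r,v}$ and $P_{v,w}$ --- have to cancel against each other. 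A case split pinned to ``whether $w \in P_{r,v}$ or not'' never sees this configuration and so cannot produce that cancellation.

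Your fallback, the ``cocycle identity'' $\spin_v(w) = \spin_v(w')\cdot\spin_{w'}(w)\cdot(\text{correction})$, has a definedness problem: $\spin_{w'}$ only makes sense for $w' \in R_0$, but in the generic configuration of clause (2) the branch vertex $w'$ is an interior vertex of $P_{r,v}$ of positive label (tree vertices have degree $2$ and hence cannot be a branch point), exactly as the paper notes in its third sub-case. So the proposed reduction to clause (1) applied at $w'$ is unavailable where you most need it. The paper instead splits into the three sub-cases $w' = v$, $w' = r$, and $w' \in P_{r,v}\setminus\{r,v\}$ (with $w' \notin R_0$), and in each writes out the additivity of the counts $P^{cp}$ and $P^{tv}$ together with the relevant values of $\sign_r(w)$ and $\sign_v(w)$ from Definition \ref{spinsign}; clause (3) is then a separate short computation using $w' = w \in R_0$ and $\sign_v(w) = \spin_r(v)$. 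To repair your proposal you would need to carry out that explicit three-way analysis (or an equivalent complete parity computation) rather than defer it.
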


\begin{proof}
By Definition \ref{spinsign}, $\sign_v(r)= \spin_r(v)$ since $r$ and $r$ trivially belong to the same neighbourhood of $v$.
Thus, writing $cp=P^{cp}_{v,r}=P^{cp}_{r,v}$ and $tv=P^{tv}_{v,r}=P^{tv}_{r,v}$, we have 
 $\spin_v(r) = \sign_v(r) (-1)^{cp+tv} =
  \spin_r(v)(-1)^{cp+tv} = \left[ \sign_r(v) (-1)^{cp+tv} \right] (-1)^{cp+tv}
  = \sign_r(v) (-1)^{2cp+2tv} = \sign_r(v)$.
 
For (2), consider $w \in (R^r_0 \cap R^v_0)  \setminus P_{r,v}$. Then $P_{r,w} \cap P_{v,w} = P_{w',w}$ for some $w' \in P_{r,v}$. 

Assume first that $w'=v$, which means that $v$ is on the path from $r$ to $w$. Hence $P^{cp}_{r,w}= P^{cp}_{r,v}+P^{cp}_{v,w}$, and $P^{tv}_{r,w}=P^{tv}_{r,v}+P^{tv}_{v,w}-1$ because $v$ is counted twice as a tree vertex.  Moreover $\sign_r(w) = \sign_r(v)$ because $w$ and $v$ are in the same neighbourhood of $r$, and on the other hand $\sign_v(w) = -\spin_r(v)$ because $w$ and $r$ are in opposite neighbourhoods of $v$. Thus we get: 

\[ \begin{array}{ll}
\spin_r(w) & =  \sign_r(w)(-1)^{P^{cp}_{r,w}+P^{tv}_{r,w}} \\
 & = \sign_r(w)(-1)^{P^{cp}_{r,v}+P^{cp}_{v,w}+ P^{tv}_{r,v}+P^{tv}_{v,w}-1}  \\ 
 & = \left[ \sign_r(v)(-1)^{P^{cp}_{r,v} + P^{tv}_{r,v} -1} \right] (-1)^{P^{cp}_{v,w}+ P^{tv}_{v,w}} \\
 & = -\spin_r(v) (-1)^{P^{cp}_{v,w} +  P^{tv}_{v,w}} \\
 & = \sign_v(w) (-1)^{P^{cp}_{v,w} + P^{tv}_{v,w}} = \spin_v(w) 
 \end{array} \]

Similarly, $w'=r$ means that $r$ is on the path from $v$ to $w$. Hence $P^{cp}_{v,w}= P^{cp}_{v,r}+P^{cp}_{r,w}$, and $P^{tv}_{v,w}=P^{tv}_{v,r}+P^{tv}_{r,w}-1$ because here $r$ is counted twice as a tree vertex.  Moreover $\sign_v(w) = \spin_r(v)$ because $w$ and $r$ are in the same neighbourhood of $v$, and on the other hand $\sign_r(w) = -\spin_r(v)$ because $v$ and $w$ are in opposite neighbourhoods of $r$. Thus, also observing that  $P^{cp}_{r,v} = P^{cp}_{v,r}$ and $P^{tvp}_{r,v} = P^{tv}_{v,r}$,   we get: 
\[ \begin{array}{ll}
\spin_v(w) & =  \sign_v(w)(-1)^{P^{cp}_{v,w}+P^{tv}_{v,w}} \\
&  = \sign_v(w)(-1)^{P^{cp}_{v,r}+P^{cp}_{r,w}+ P^{tv}_{v,r}+P^{tv}_{r,w}-1}  \\
&  = \spin_r(v)(-1)^{P^{cp}_{v,r}+P^{cp}_{r,w}+ P^{tv}_{v,r}+P^{tv}_{r,w}-1}  \\
&  = \left[\sign_r(v)(-1)^{P^{cp}_{r,v}+ P^{tv}_{r,v}}\right] (-1)^{P^{cp}_{v,r}+P^{cp}_{r,w}+ P^{tv}_{v,r}+P^{tv}_{r,w}-1} \\
& =  \sign_r(v)(-1)^{2P^{cp}_{r,v}+ 2P^{tv}_{r,v}+P^{cp}_{r,w}+P^{tv}_{r,w}-1}  \\
& = - \sign_r(v)(-1)^{P^{cp}_{r,w}+P^{tv}_{r,w}}  \\
& =  \sign_r(w)(-1)^{P^{cp}_{r,w}+P^{tv}_{r,w}}   = \spin_r(w) 
\end{array} \]

\begin{figure}[ht]
\includegraphics[width=\textwidth]{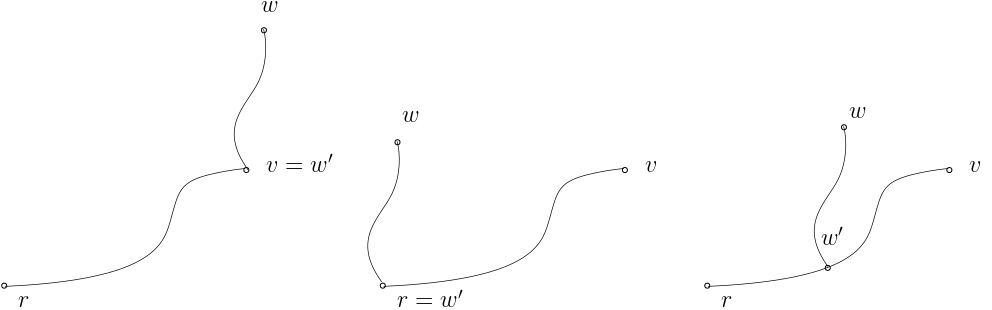}
\caption{Lemma \ref{spinpreserv}: The three cases: $w'=v$, $w'=r$, and $w' \in  P_{r,v} \setminus \{r,v\}$.}
\end{figure}

Finally assume that $w' \in  P_{r,v} \setminus \{r,v\}$. Note that in this case $w' \notin R_0$ since tree vertices have degree 2; this yields  $P^{cp}_{r,w}=P^{cp}_{r,w'}+P^{cp}_{w',w}$ and $P^{tv}_{r,w}=P^{tv}_{r,w'}+P^{tv}_{w',w}$, and similarly $P^{cp}_{v,w}=P^{cp}_{v,w'}+P^{cp}_{w',w}$ and $P^{tv}_{v,w}=P^{tv}_{v,w'}+P^{tv}_{w',w}$. 
If $P^{cp}_{r,w'}+P^{tv}_{r,w'}$ and $P^{cp}_{v,w'}+P^{tv}_{v,w'}$ have the same parity, this immediately implies $\spin_r(w)=\spin_v(w)$.
Otherwise,  $P^{cp}_{r,v}+P^{tv}_{r,v}$ is odd, and together with the fact that $r$ and $w$ are in the same neighbourhood of $v$, $v$ and $w$ are in the same neighbourhood of $r$,  all imply that $\sign_v(w)=\spin_r(v)=\sign_r(v)(-1)^{P^{cp}_{r,v}+P^{tv}_{r,v}}=-\sign_r(v)=-\sign_r(w)$; and 
since $P^{cp}_{r,w}+P^{tv}_{r,w}$ and $P^{cp}_{v,w}+P^{tv}_{v,w}$ have different parity we conclude that $\spin_r(w)=\spin_v(w)$.

The proof of (3) follows a similar analysis. Note here that $w \in R_0\setminus\{r,v\}$, and since $w$ and $r$ are in the same neighbourhood of $v$, we have:
\[ \begin{array}{ll}
\spin_v(w) & =  \sign_v(w)(-1)^{P^{cp}_{v,w}+P^{tv}_{v,w}} \\
&  = \spin_r(v)(-1)^{P^{cp}_{v,w}+P^{tv}_{v,w}} \\
&  = \left[ \sign_r(v)(-1)^{P^{cp}_{r,v}+P^{tv}_{r,v} }\right] (-1)^{P^{cp}_{v,w}+P^{tv}_{v,w}} \\
&  = \left[ \sign_r(v)(-1)^{P^{cp}_{r,w}+P^{cp}_{w,v}+ P^{tv}_{r,w} + P^{tv}_{w,v}-1 }\right] (-1)^{P^{cp}_{v,w}+P^{tv}_{v,w}} \\
&  = -\sign_r(v)(-1)^{2 P^{cp}_{v,w}+  2P^{tv}_{v,w}+  P^{cp}_{r,w}+  P^{tv}_{r,w}   } \\
&  = -\sign_r(w)(-1)^{P^{cp}_{r,w}+  P^{tv}_{r,w}   } \\
&  = -\spin_r(w)
\end{array} \]
This completes the proof of the lemma. 
\end{proof}

We can further correlate the spin between any two vertices. 

\begin{corollary}\label{spinpreserv3}
Let $u,v \in R_0$. Then 
\begin{enumerate}
\item $\spin_u(w)=\spin_v(w)$ for all  $w \in (R^u_0 \cap R^v_0)  \setminus P_{u,v}$, and
\item $\spin_u(w)=-\spin_v(w)$ for all  $w \in (R^u_0 \cap R^v_0)  \cap P_{u,v}$, with the only exception of $w \neq r$ and $P_{r,u}\cap P_{r,v}=P_{r,w}$, in which case $\spin_u(w)=\spin_v(w)$.
\end{enumerate}
\end{corollary}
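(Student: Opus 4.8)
The plan is to reduce Corollary \ref{spinpreserv3} to Lemma \ref{spinpreserv} by comparing both $\spin_u$ and $\spin_v$ against the reference spin $\spin_r$, using a case analysis on the relative position of $u$, $v$, $w$ and $r$ in the tree $R$. Since $R$ is a tree, the three pairwise paths $P_{u,v}$, $P_{u,w}$, $P_{v,w}$ meet in a ``median'' vertex, and likewise the position of $r$ relative to the triangle $\{u,v,w\}$ is governed by where the branch $P_{r,\cdot}$ attaches. I would organize the argument around which arc of the $u$-$v$-$w$ configuration (and of $P_{r,u}$, $P_{r,v}$) contains the relevant junction points, since Lemma \ref{spinpreserv} already tells us exactly how $\spin_u$ and $\spin_v$ differ from $\spin_r$ depending on whether $w$ lies on the path $P_{r,u}$ (resp.\ $P_{r,v}$) or not.

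For part (1), take $w \in (R^u_0 \cap R^v_0) \setminus P_{u,v}$. By Lemma \ref{spinpreserv} applied with base $u$ we have $\spin_u(w) = \pm \spin_r(w)$ according to whether $w \in P_{r,u}\setminus\{r,u\}$ or not, and similarly $\spin_v(w) = \pm\spin_r(w)$ according to whether $w \in P_{r,v}\setminus\{r,v\}$. So it suffices to show that $w$ lies on $P_{r,u}$ if and only if it lies on $P_{r,v}$. This is where I would use that $w \notin P_{u,v}$: if $w$ were on exactly one of the two paths, say $w \in P_{r,u}$ but $w \notin P_{r,v}$, then $w$ separates $r$ from $u$ but not from $v$, which forces $w$ onto $P_{u,v}$ (the path from $u$ to $v$ must pass through $w$), a contradiction. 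One must also dispose of the degenerate sub-cases where $r \in \{u,v\}$ or $r$ lies on $P_{u,w}$ or $P_{v,w}$ outside $P_{u,v}$; these are handled directly from the definitions and from Lemma \ref{spinpreserv}(1).

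For part (2), take $w \in (R^u_0 \cap R^v_0) \cap P_{u,v}$, so $w$ is an interior tree vertex of $P_{u,v}$. Again write $\spin_u(w)$ and $\spin_v(w)$ in terms of $\spin_r(w)$ via Lemma \ref{spinpreserv}. The generic situation is that $r$ hangs off $P_{u,v}$ at some vertex $w'$, and one checks from the tree structure that $w$ lies on exactly one of $P_{r,u}$, $P_{r,v}$ — indeed, $w' \in P_{u,v}$ splits $P_{u,v}$ into a $u$-side and a $v$-side, and $w$ being on, say, the $u$-side puts $w$ on $P_{r,u}$ but not $P_{r,v}$ — so the two signs coming from Lemma \ref{spinpreserv} are opposite, giving $\spin_u(w) = -\spin_v(w)$. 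The stated exceptional case is exactly when $w = w'$ is the attachment point of $r$, i.e.\ $P_{r,u}\cap P_{r,v} = P_{r,w}$ with $w\neq r$: then $w \notin P_{r,u}\setminus\{r,u\}$ and $w\notin P_{r,v}\setminus\{r,v\}$ (it is the \emph{endpoint} $w'$ of both descending branches, not an interior vertex), so Lemma \ref{spinpreserv}(2) applies on both sides with the \emph{same} sign, yielding $\spin_u(w)=\spin_v(w)$. I would also separately note the trivial case $r \in P_{u,v}$ interior with $r\neq w$ — then $r \in P_{r,u}\cap P_{r,v}$ and the roles are symmetric — and the case $w$ adjacent to $u$ or $v$ where one of $\spin_u(w)$, $\spin_v(w)$ may be undefined, which is excluded by the hypothesis $w \in R^u_0 \cap R^v_0$.

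**The main obstacle** I anticipate is bookkeeping the boundary/degenerate configurations cleanly: the vertex $r$ may coincide with $u$ or $v$, may lie on $P_{u,v}$, or may attach at an endpoint of one of the relevant subpaths, and in each of these the phrases ``$w \in P_{r,u}\setminus\{r,u\}$'' used in Lemma \ref{spinpreserv} behave differently. The clean way through is to first fix the median vertex $m$ of $\{u,v,w\}$ (which equals $w$ here in part (2), and is some vertex of $P_{u,v}$ in part (1)) and the attachment vertex $w'$ of $r$, reduce everything to the mutual position of $m$, $w'$, $w$ along $P_{u,v}$, and only then invoke Lemma \ref{spinpreserv}; the parity cancellations are then immediate and no fresh computation with the $(-1)^{cp+tv}$ formula is needed beyond what Lemma \ref{spinpreserv} already packaged.
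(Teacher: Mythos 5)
Your overall strategy---comparing both $\spin_u$ and $\spin_v$ to the reference $\spin_r$ via Lemma \ref{spinpreserv}, with a case analysis on whether $w$ lies on $P_{r,u}$ and/or $P_{r,v}$---is exactly the paper's proof, and your part (1) and the generic case of part (2) are sound. However, your treatment of the exceptional case of (2) contains a step that fails. If $P_{r,u}\cap P_{r,v}=P_{r,w}$ with $w\neq r$, then $w$ is the branch vertex of the two paths, so $w\in P_{r,u}$ and $w\in P_{r,v}$; since moreover $w\neq u,v$ (because $w\in R^u_0\cap R^v_0$), $w$ is an \emph{interior} vertex of both $P_{r,u}$ and $P_{r,v}$. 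Your claim that $w\notin P_{r,u}\setminus\{r,u\}$ and $w\notin P_{r,v}\setminus\{r,v\}$ is therefore false, and Lemma \ref{spinpreserv}(2) cannot be invoked on either side, since its hypothesis ($w\notin P_{r,v}$, resp.\ $w\notin P_{r,u}$) is violated. The correct route, and the one the paper takes, is to apply Lemma \ref{spinpreserv}(3) on \emph{both} sides: $\spin_u(w)=-\spin_r(w)$ and $\spin_v(w)=-\spin_r(w)$, so the two minus signs cancel and $\spin_u(w)=\spin_v(w)$ still follows. Thus your stated conclusion is right, but the justification as written is not.

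A smaller omission: in part (2) you never address the sub-case $w=r\in P_{u,v}$, where $\spin_r(w)$ is undefined and the comparison-to-$\spin_r$ scheme breaks down; there one must use Lemma \ref{spinpreserv}(1) directly, noting that $u$ and $v$ lie in opposite neighbourhoods of $r$, so $\spin_u(r)=\sign_r(u)=-\sign_r(v)=-\spin_v(r)$, which is consistent with the minus sign asserted in (2) (this is not the stated exception, which requires $w\neq r$). With these two repairs your argument coincides with the paper's.
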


\begin{proof}
The proof follows by carefully using Lemma \ref{spinpreserv}. Consider $w \in (R^u_0 \cap R^v_0)$.

If for a first case  $w \notin P_{u,r} \cup P_{v,r}$ (and hence $w \notin P_{u,v}$), then $\spin_u(w)=\spin_r(w)=\spin_v(w)$ by Lemma \ref{spinpreserv} (2). 

Next assume that $w \in P_{u,r} \cap P_{v,r}$. If $w \neq r$, then $w \in  P_{r,u} \setminus \{r,u\}$ and $w \in P_{r,v} \setminus \{r,v\}$, hence
$\spin_u(w)=-\spin_r(w)=\spin_v(w)$ by Lemma \ref{spinpreserv} (3). Note that this includes the case that $P_{r,u}\cap P_{r,v}=P_{r,w}$ with $w \in P_{u,v}$.
Now if $w=r$, then either $w \notin P_{u,v}$ and  thus $u$ and $v$ are in the same neighbourhood of $w=r$, so by Lemma \ref{spinpreserv} (1),
$\spin_u(w) = \spin_u(r)= \sign_r(u) = \sign_r(v) = \spin_v(r)= \spin_v(w)$; or else  $w=r \in P_{u,v}$, then $u$ and $v$ are in opposite neighbourhood of $w=r$, so again by Lemma \ref{spinpreserv} (1), 
$\spin_u(w) = \spin_u(r)= \sign_r(u) = -\sign_r(v) = -\spin_v(r)= -\spin_v(w)$. 

So finally assume without loss of generality that $w \in P_{u,r} \setminus P_{v,r}$, and hence $w \in P_{u,v}$.
Then $\spin_u(w) =-\spin_r(w)$ by Lemma \ref{spinpreserv} (3) since  $w \in P_{r,u} \setminus \{r,u\}$.
Also $\spin_v(w) =\spin_r(w)$ by Lemma \ref{spinpreserv} (2) since  $w \in (R^r_0 \cap R^v_0) \setminus P_{r,v}$.
Thus $\spin_u(w)=-\spin_v(w)$.
\end{proof}

Recall that labels have been encoded by graph propertie (with teh gadgets)s, and thus are preserved by any (rooted tree) embedding $\phi: (R,r) \rightarrow (R,v)$. Hence the height and colour functions are also preserved. If we further ask to preserve the spin (and hence sign) function, then such a \semb 
  as we denote it is unique (except for possibly interchanging the leaves of gadgets, which is immaterial for our purpose); we will see later how the construction will yield a tree actually preserving the spin through graph properties. 

\subsection{Ray vertices}\label{rayvertices}
The second type of structures that will be used in the construction are double rays (two-way infinite rays) with two kinds of vertex types.
Eventually, each tree vertex in all copies of the graph $R$ will be amalgamated with a vertex on a double ray. 

Let $D$ be a double ray, which we will normally identify as $D=\{v_i: i \in \Z \}$ with edges $(v_i,v_{i+1})$ for $i \in \Z$. We will equip $D$ with {\em type assignments} of the form $\type : V(D) \rightarrow \{0,1\}$, and we write $\SD_{\type}=(D,\type)$  for the resulting structure. A {\type}-embedding  (we could also simply use \semb ) $\phi:\SD_{\type} \rightarrow \SD_{\type'}$ is a graph embedding of $P$ such that $\type(v_i) \leq \type'(\phi(v_i))$ for each $i \in \Z$. 
What will be important is that an  embedding of $\SD_{\type}$ can send type $0$ vertices into type $1$ vertices, but not the other way around.  
Again we encode these structures as graph properties, and we do so by identifying every vertex $v \in D$ with the root of a copy of $PK(2,2)$ if $\type(v)=0$, and with the root of $PK(2,3)$ if $\type(v)=1$. We remark that eventually every tree vertex $v$ will be a vertex in a double ray equipped with a type assignment, and so will have two finite gadgets attached to it:  one that encodes its label of zero as a tree vertex, that is a copy of $PK(2,2)$,  and the other identified with the root of a copy of either $PK(2,2)$ or $PK(2,3)$  matching its type assignment as a ray vertex. Graph embeddings will necessarily send label attachments and type attachments to attachments of the same kind.   Since $PK(2,2)$ embeds in $PK(2,3)$ but not vice versa the type property is preserved as well as the label.

We will loosely call $\SD_{\type}$ a double ray even though it comes equipped with vertices of type $0$ or $1$, and we will continue to use the symbol $P$ to denote a regular double ray (without any type assignment).
Now consider the special type assignment $\type_0$ such that:
\[ \type_0(v_i) = \left\{ \begin{array}{l}0 \mbox{ for }i \leq 0 \\ 1 \mbox{ for  } i > 0. \end{array} \right. \]
and let $\SD_0=\SD_{\type_0}$ be the resulting double ray. Note that $v_0$ is the first (only in this case) vertex of type $0$ followed by a type $1$ vertex, and we call it the center $z=z_0$ of $\SD_0$. Observe  that all siblings of $\SD_0$ are of the form $\SD_{\type}$ for some type assignment $\type$ consisting of a  finite modification of the  above type  $\type_0$. Hence there are exactly countably many  (up to isomorphy) pairwise non-isomorphic siblings of $\SD_0$ not isomorphic to $\SD_0$, and all will have a vertex of type  $1$ followed by a type $0$ vertex. We select  $\langle \SD_s: 0<s < \sss \rangle$ with pairwise non-isomorphic type  assignments $\langle \type_s: 0<s < \sss \rangle$. For example we can select $\type_s$ as follows: 
\[ \type_s(v_j) = \left\{ \begin{array}{l}0 \mbox{ for }j < 0 \mbox { or } 1 \leq j \leq s \\ 1 \mbox{ for  } j = 0 \mbox{ or } j \geq s+1. \end{array} \right. \]
We again let $z_s=v_0$ be the centre of $\SD_s$.

\begin{figure}[ht]
\includegraphics[width=.9\textwidth]{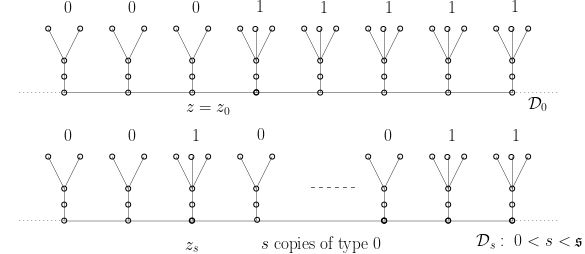}\label{doublerays}
\caption{Double rays $\SD_{\type_s}: s < \sss$ with their (graph version) gadget 0-1 type assignments}
\end{figure}

This indexing is designed specifically  so no embedding from $\SD_s$ to $\SD_{s'}$ can send $z_s$ to $z_{s'}$ for any $s \neq s'$. These will also form  the centres of the resulting trees $\ST_s$ and will be used to ensure embeddings preserve the sign and spin functions as promised earlier. 
  
We will call {\em ray vertices} those vertices $v_i$'s on the double rays. Because ray vertices of type $1$ cannot embed in ray vertices of type $0$, then these double rays are equipped with a natural direction dictated by embeddings, reflecting the positive direction of the indexing along  $\Z$. 
  
Finally if $D$ is a double ray in a tree $T$ and $v \in D$, we define for later convenience  $T^D_v$ the connected component of $T$ containing $v$ without its two neighbours on the double ray $D$.

Before we move on to other required properties of these trees, we note that the case of partial orders will require that the double ray type assignments above be done on even indexed vertices only, and that the odd indexed vertices all be equipped with a gadget that does not embed in any the type assignments gadgets, and vice-versa: using $PK(4,2)$ on odd indexed vertices for example will do (which is why we left that gadget available). 

\begin{figure}[ht]
\includegraphics[width=.9\textwidth]{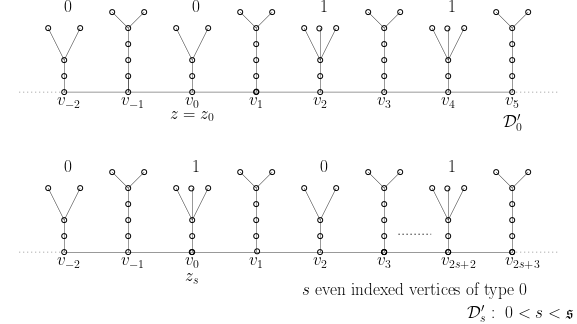}\label{doubleraysposets}
\caption{Double rays $\SD'_{\type_s}: s < \sss$  for Posets with gadget 0-1 type assignments on even index vertices.}
\end{figure}

We urge the reader to take note that the construction below can be done using either kind of type assignments on the double rays with similar results; this will be used for the case of partial orders. 

\subsection{Global colouring, spin and height}
Each of the trees $\langle \ST_s : s <\sss \rangle$ will be built in a similar manner, and we will do so by first constructing a common ``spine'' $\Sp$. 
This will be done by first assembling disjoint copies of $(R,r)$ along a double ray $\SD$, identifying each ray vertex of $\SD$ with the root of a disjoint copy of $(R,r)$. 
Then each existing tree vertex (other than the root, r,  in the copies of $(R,r)$) will be identified with a vertex of a disjoint copy of a double ray which we will ensure is the centre of that double ray, and each new ray vertex will be identified with the root of a disjoint copy of $(R,r)$, etc. Then later all we will need to do is for each $ s <\sss$  add judiciously chosen type assignments on each of those double rays to obtain $\ST_s$. 

Every copy of $(R,r)$ inherits its own local labelling, colour, height, sign and spin functions, and we will wish to extend these notions globally to $\Sp$ (and eventually $\ST_s$); we will introduce a centre of $\Sp$ when the need arises. The labelling of a vertex in $\Sp$ will simply be its labelling when considered within its own copy of $(R,r)$, but we wish to extend the other notions globally to $\Sp$ so they apply across copies of $(R,r)$. 

We will proceed to build $\Sp$ in stages $\Sp(k)$, but first we define a global height  $\whth$ in such trees formed by assembling disjoint copies of $(R,r)$. Recall that each such copy comes equipped with its corresponding label function, and hence the global height is simply the maximum label encountered in a path. This will be used in particular to determine the stages of the construction. 
 
\begin{definition}\label{globalheight}
Let $\SSS$ be a tree formed by assembling disjoint copies of $(R,r)$ along double rays. Then for $v,w \in \SSS$, define 
\begin{center}
\begin{tabular}{ll}
$\whth_v(w) =$ & $max \{\lab^{R'}(w'): w' \in P_{v,w}$, and \\
& $w'$ is in a copy $(R',r')$ of  $(R,r) \}.$ \\
\end{tabular}
\end{center}
\end{definition} 

Note that $\whth_v(w) = \hth_v(w)$ in case $v$ and $w$ belong to the same copy of $(R,r)$. We are now ready to  define the spine $\Sp$.

\begin{definition}\label{spine}
\begin{itemize}
\item We \underline{activate} a ray vertex by identifying that vertex with the root $r'$ of a disjoint copy $(R',r')$ of $(R,r)$.
\item We \underline{amalgamate} a tree vertex by identifying that vertex with one of the vertices of a disjoint copy of a double ray.
\item Define $\Sp(0)$ by activating every ray vertex of a double ray.  \\
 Given $\Sp(k-1)$, let $\Sp_0(k)= \Sp(k-1)$, and $\Sp_{\ell+1}(k)$ be obtained from  $\Sp_\ell(k)$ by amalgamating every non-amalgamated tree vertex of global height at most $k$, followed by activating every new ray vertex. \\
Define  $\Sp(k) = \bigcup_\ell \Sp_\ell(k)$.
\item
Finally let $\Sp =\bigcup_k \Sp(k)$.
\end{itemize}
\end{definition} 

Observe that every ray vertex of  $\Sp$  is {\em activated} (with the root of a disjoint copy of $(R,r)$), thus we can think of every vertex of  $\Sp$ as being in a copy of $(R,r)$. Moreover every tree vertex is {\em amalgamated} with a vertex of a double ray, and this is through those double rays that one navigates from one copy of $(R,r)$ to another. 

Now as before an embedding $\phi$ of $\Sp$ must preserve labels, that is $\lab^{R'}(v)=\lab^{R''}(\phi(v))$ where $v$ belongs to a copy $(R',r')$ and $\phi(v)$ to a copy $(R'',r'')$; this is simply because the finite trees attached to tree vertices do not embed into each other for different labels. 
But this means that $\phi$ actually preserves copies since moving from one copy to another requires a path through at least two consecutive ray vertices, which are activated to tree vertices of label 0. We take note of this in the following  important observation. 

\begin{observation}\label{spinepreservecopies}
Any embedding of $\Sp(k)$ or $\Sp$ is surjective, preserves labels, ray and tree vertices, and copies of $(R,r)$. 
\end{observation}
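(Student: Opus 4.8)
The plan is to verify the four claims of Observation~\ref{spinepreservecopies} in sequence, leaning on Observation~\ref{symmetry} (all embeddings of $\SR$ are surjective) and on the gadget-encoding of labels established in Subsection~\ref{rootedtree}. First I would argue that labels are preserved. Every tree vertex and every ray vertex of $\Sp$ carries finite rooted-tree gadgets (the $PK$-trees) encoding its label and, for tree vertices, its type; since distinct gadgets are pairwise non-embeddable as rooted trees, any graph embedding $\phi$ must send a gadget attached at $v$ to a gadget of the same kind at $\phi(v)$, forcing $\lab^{R'}(v)=\lab^{R''}(\phi(v))$. In particular $\phi$ sends degree-$2$ vertices to degree-$2$ vertices, i.e.\ tree vertices to tree vertices, and label-$0$ vertices (the ray vertices, once activated) to label-$0$ vertices; but I should be careful here, since ray vertices \emph{are} label-$0$ tree vertices after activation, so ``ray vs.\ tree'' is not visible from the label alone --- see below.

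Next I would establish preservation of copies of $(R,r)$. The key structural point is that in $\Sp$ two distinct copies of $(R,r)$ meet only along double rays, and to pass from one copy to a neighbouring copy one must traverse a segment of a double ray containing at least two consecutive ray vertices. Since $\phi$ preserves labels and the local structure of $(R,r)$ (each label-$\ell>0$ vertex has exactly three neighbours labelled $\ell-1,\ell,\ell+1$, each label-$0$ vertex has exactly two), a connected label-preserving injection restricted to a single copy $(R',r')$ must land inside a single copy $(R'',r'')$: I would show that the image of $(R',r')$ is connected, contains no ``double-ray crossing'' pattern that isn't already present inside a single copy, and hence is contained in one copy; then apply Observation~\ref{symmetry} to conclude $\phi$ maps $(R',r')$ isomorphically onto $(R'',r'')$. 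This simultaneously handles ``preserves ray and tree vertices'': ray vertices are precisely the label-$0$ vertices that are amalgamated (i.e.\ lie on a double ray within $\Sp$, equivalently carry a type gadget in addition to the label gadget), and since $\phi$ respects the extra type gadgets it distinguishes ray vertices from the root vertices $r'$, and more generally sends double rays to double rays.

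Finally, surjectivity. Since $\phi$ maps each copy of $(R,r)$ into a single copy and restricts there to an embedding of $\SR$, Observation~\ref{symmetry} gives that each such restriction is onto its target copy. It remains to see that no copy of $(R,r)$ in $\Sp$ is ``missed'': because every ray vertex of every double ray in $\Sp$ is activated and every tree vertex amalgamated, the copies are connected to one another in a rigid pattern, and $\phi$ being label-preserving and copy-preserving means the induced map on the index set of copies is itself a graph embedding of the (locally finite) ``quotient'' incidence structure; a short connectivity argument, together with the fact that every individual copy-restriction is surjective, forces the global map to be onto all of $\Sp$. The same argument applies verbatim to each finite-depth approximation $\Sp(k)$, noting that within $\Sp(k)$ the same activation/amalgamation completeness holds for tree vertices of global height at most $k$.

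I expect the main obstacle to be the copy-preservation step: one must rule out an embedding that ``folds'' part of one copy of $(R,r)$ across a double ray into a genuinely different copy, exploiting the high symmetry of $(R,r)$ noted after Observation~\ref{symmetry}. The resolution is that such a fold would require the image of a connected label-preserving neighbourhood to realize a double-ray crossing --- two consecutive label-$0$ vertices each of degree $>2$ within $\SR$ --- which cannot happen inside a single copy of $(R,r)$, where consecutive label-$0$ vertices never occur (label-$0$ vertices have only label-$1$ neighbours). Making this combinatorial incompatibility precise, using the degree/label constraints and Observation~\ref{reconstructlabels}, is the crux; everything else is bookkeeping with Observation~\ref{symmetry}.
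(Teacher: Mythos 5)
Your proposal follows essentially the same route as the paper's own (brief) justification: labels are forced by the pairwise non-embeddable $PK$-gadgets, copies of $(R,r)$ are preserved because passing between copies requires two adjacent label-$0$ vertices, a pattern that never occurs inside a single copy, and surjectivity comes from Observation~\ref{symmetry} applied copy-by-copy together with a connectivity argument. One small slip worth noting: the spine $\Sp$ carries no type gadgets (those are added only in the trees $\ST_s$), but your fallback identification of ray vertices as the amalgamated label-$0$ vertices (distinguishable by degree) is what actually does the work, so the argument is unaffected.
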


We next extend the colour function globally to $\Sp$, and this is a bit more delicate. First note that new copies of $(R,r)$ are created by 
 identifying a ray vertex with the root $r'$ of a disjoint copy $(R',r')$ of $(R,r)$. But once that copy is created, a path originating from outside it could enter that copy through a different tree vertex $v' \neq r'$ that was later activated. 

\begin{definition}\label{globalcolour}
\begin{enumerate}
\item Let $\wR_0 = \{v \in \Sp: v $ belongs to a copy $(R',r')$ of $(R,r)$ and $\lab^{R'}(v)=0\}$. 
\item For $v \in \wR_0$, define $\wcol_v: \wR_0  \rightarrow \N$ by $\wcol_v(w) = \col^{R'}_{v'}(w)$ where $w$ belongs to a copy $(R',r')$ of $(R,r)$ and  $v'$ is the first vertex of $P_{v,w} \cap R'$.
\end{enumerate}
\end{definition}

\begin{figure}[ht]
\includegraphics[width=.5\textwidth]{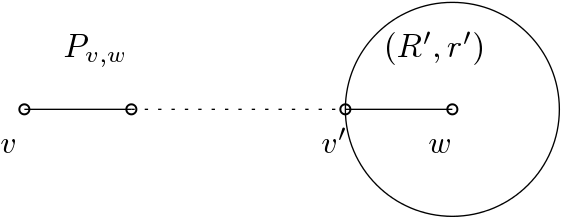}
\caption{Definition \ref{globalcolour}: The global colour function:  $\wcol_v(w) = \col^{R'}_{v'}(w)$. }
\end{figure}

Thus $v'=v$ and $\wcol_v(w) = \col^{R'}_v(w)$ in case $v$ and $w$ belong to the same copy $(R',r')$ of $(R,r)$. We now show that Lemma \ref{colpreserv} generalizes globally as follows. 

\begin{lemma}\label{globcolpreserv}
For any  two tree vertices  $u,v \in \wR_0$,  $\wcol_u(w)=\wcol_v(w)$ for all but finitely many $w \in \wR_0$.  \\
The only possible exceptions are vertices originating from a path starting from $P_{u,v}$ with strictly decreasing labels. 
\end{lemma}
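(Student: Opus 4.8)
The plan is to reduce the global statement to the already-established local Lemma~\ref{colpreserv} by following the finite path $P_{u,v}$ through the copies of $(R,r)$ it visits. First I would set up the combinatorial picture: since $\Sp$ is a tree, for any $w \in \wR_0$ the paths $P_{u,w}$ and $P_{v,w}$ meet in a path $P_{w'',w}$ for a unique $w'' \in P_{u,v}$, exactly as in the proof of Lemma~\ref{colpreserv}. The point is that $\wcol_u(w)$ and $\wcol_v(w)$ are both computed, by Definition~\ref{globalcolour}, as $\col^{R'}_{v'}(w)$ where $(R',r')$ is the copy containing $w$ and $v'$ is the first vertex of the relevant path inside $R'$; so they can only differ if the two paths $P_{u,w}$ and $P_{v,w}$ enter the copy $R'$ at different vertices, or — when they enter at the same vertex — if within $R'$ the local colours computed from that entry vertex already disagree (which by Lemma~\ref{colpreserv} happens only for finitely many $w$, on strictly-decreasing-label paths from $P_{u,v}\cap R'$).

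Next I would handle the two ways the entry vertices can differ. If $w'' \in P_{u,v}$ lies strictly between $u$ and $v$ and $P_{w'',w}$ contains a ray-vertex-to-ray-vertex crossing (equivalently, passes through a consecutive pair of $0$-labelled vertices, i.e. leaves the copy of $R$ at $w''$), then $\wcol_u(w)$ and $\wcol_v(w)$ are both recomputed starting from the same vertex $w''$ onward, hence are equal; this is the generalization of ``if $P_{w',w}$ contains a consecutive pair then the colours agree.'' The exceptional $w$ are precisely those for which $P_{w'',w}$ never leaves the copy of $R$ containing $w''$ and moreover has strictly decreasing labels from $w''$ on — but for each (not necessarily tree) vertex $w''$ on the finite path $P_{u,v}$ there is at most one such tree vertex $w$ in its copy, exactly as in Lemma~\ref{colpreserv}. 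Since $P_{u,v}$ is finite and visits finitely many copies, this yields only finitely many exceptions in total, and they all originate from paths starting at a vertex of $P_{u,v}$ with strictly decreasing labels, as claimed.

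The main obstacle I anticipate is bookkeeping the ``entry vertex'' subtlety flagged in the remark just before the lemma: a path from outside a copy $(R',r')$ need not enter it through its root $r'$, so the local colour invoked in Definition~\ref{globalcolour} is $\col^{R'}_{v'}$ with $v'$ a possibly non-root tree vertex, and one must check that the choice of $v'$ is the same for the $u$-path and the $v$-path whenever $w''$ lies ``before'' that copy. The clean way to see this is: the portion of $P_{u,w}$ and of $P_{v,w}$ from $w''$ to $w$ is literally the same path $P_{w'',w}$, so they enter $R'$ at the same vertex and thus $v'$ is the same for both — the only genuine discrepancy is when $w'' = u$ or $w'' = v$, i.e. $w''$ is an endpoint, and these cases contribute the vertices on paths out of $P_{u,v}$ with strictly decreasing labels, together with possible local exceptions inside the copies containing $u$ and $v$ governed directly by Lemma~\ref{colpreserv}. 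Once this identification is made explicit, the global statement follows from the local one essentially verbatim, with ``consecutive pair'' replaced by ``consecutive pair, or a crossing between copies,'' both of which reset the colour computation.
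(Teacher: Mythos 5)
Your argument is essentially the paper's proof: locate the unique meeting vertex $w''\in P_{u,v}$ of $P_{u,w}$ and $P_{v,w}$, note that a consecutive pair (or a crossing between copies) on the shared segment $P_{w'',w}$ forces $\wcol_u(w)=\wcol_v(w)$, and otherwise that segment has strictly decreasing labels, leaving at most one exceptional $w$ for each vertex of the finite path $P_{u,v}$. The one slip is your claim that the entry vertices into the copy containing $w$ can differ only when $w''=u$ or $w''=v$ --- they can also differ when $w''$ is interior and $P_{u,v}$ itself traverses that copy --- but since in that case both entry vertices lie on $P_{u,v}$ and Lemma \ref{colpreserv} then only produces exceptions on strictly-decreasing-label paths starting from $P_{u,v}$, the stated conclusion is unaffected.
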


\begin{proof}
Let $P_{u,w} \cap P_{v,w} = P_{w',w}$ for some $w' \in P_{u,v}$. If $P_{w',w}$ contains a consecutive pair in $R'$, then $\wcol_u(w)=\wcol_v(w)$.
Otherwise  $P_{w',w}$ is a path (in some copies $(R',r')$) with strictly decreasing labels, and observe there is  only one such possible vertex $w$ for each $w' \in P_{u,v}$. 
\end{proof}

It will later be useful to understand the global colouring through embeddings, we do so in two parts.

\begin{figure}[ht]
\includegraphics[width=.7\textwidth]{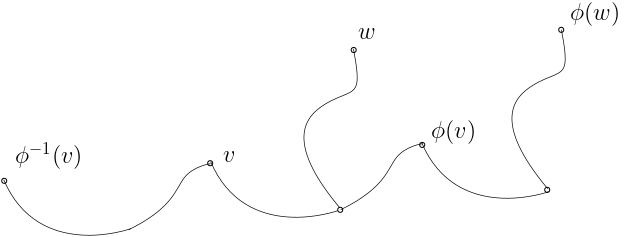}
\caption{Corollaries  \ref{embcolpreserv1} and \ref{embcolpres2}: $\wcol_v(w)$, $\wcol_{\phi(v)}(w)$, $\wcol_{\phi(v)}(\phi(w))$, and $\wcol_v(\phi(w))$. }
\end{figure}

\begin{corollary}\label{embcolpreserv1}
Let $v \in \wR_0$ be a tree vertex, and $\phi$ an embedding of $\Sp$. Then $\wcol_v(w)=\wcol_{\phi(v)}(w)=\wcol_{\phi(v)}(\phi(w))$ for all but finitely many $w \in \wR_0$. \\
The only possible exceptions are vertices originating from a path starting from $P_{v,\phi(v)}$ with strictly decreasing labels.
\end{corollary}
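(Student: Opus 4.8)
The plan is to follow the same two-step pattern as in Corollary \ref{colembpreserv}, now using the global statements. First I would invoke Lemma \ref{globcolpreserv} with the pair of tree vertices $v$ and $\phi(v)$ (note $\phi(v)\in\wR_0$, since by Observation \ref{spinepreservecopies} any embedding of $\Sp$ is surjective and preserves tree vertices): this yields $\wcol_v(w)=\wcol_{\phi(v)}(w)$ for all but finitely many $w\in\wR_0$, the only exceptions being tree vertices lying on a path that starts at some vertex of $P_{v,\phi(v)}$ and has strictly decreasing labels.

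Second, I would show $\wcol_v(w)=\wcol_{\phi(v)}(\phi(w))$ for \emph{every} tree vertex $w$, with no exceptions. This is the ``$\phi$ transports the colour'' half, and it follows purely from the fact that $\phi$ is an automorphism of $\Sp$ preserving labels and copies of $(R,r)$ (Observation \ref{spinepreservecopies}). Unwinding Definition \ref{globalcolour}, $\wcol_v(w)=\col^{R'}_{v'}(w)$, where $R'$ is the copy of $(R,r)$ containing $w$ and $v'$ is the first vertex of $P_{v,w}$ lying in $R'$. Since $\phi$ maps $\Sp$ isomorphically onto itself, it carries $P_{v,w}$ onto $P_{\phi(v),\phi(w)}$ and carries $R'$ onto the copy $R''$ containing $\phi(w)$; hence $\phi(v')$ is precisely the first vertex of $P_{\phi(v),\phi(w)}$ lying in $R''$. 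Finally $\phi$ restricts to a label-preserving isomorphism $R'\to R''$ sending $v'$ to $\phi(v')$, and since the local colour function is determined by labels alone (the label of the last consecutive pair, Definition \ref{colour}), we obtain $\col^{R'}_{v'}(w)=\col^{R''}_{\phi(v')}(\phi(w))$, i.e.\ $\wcol_v(w)=\wcol_{\phi(v)}(\phi(w))$.

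Chaining the two steps gives $\wcol_v(w)=\wcol_{\phi(v)}(w)=\wcol_{\phi(v)}(\phi(w))$ for all but finitely many $w$, with exceptional set exactly the one furnished by Lemma \ref{globcolpreserv}, namely tree vertices originating from a strictly-decreasing-label path starting in $P_{v,\phi(v)}$. The main obstacle is the bookkeeping in the second step: one must verify that ``the first vertex of $P_{v,w}$ inside a given copy'' is genuinely preserved by $\phi$, even though distinct copies of $(R,r)$ overlap at amalgamated tree vertices and a single path may thread through many copies. This is exactly what Observation \ref{spinepreservecopies} supplies: $\phi$ permutes the copies of $(R,r)$ and respects the ray/tree-vertex dichotomy, so the decomposition of $P_{v,w}$ into maximal sub-paths inside successive copies is transported verbatim to $P_{\phi(v),\phi(w)}$, and the argument goes through.
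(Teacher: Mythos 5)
Your proposal is correct and follows essentially the same route as the paper: the paper likewise splits the claim into $\wcol_v(w)=\wcol_{\phi(v)}(w)$ via Lemma \ref{globcolpreserv} (giving exactly the stated exceptional set) and $\wcol_v(w)=\wcol_{\phi(v)}(\phi(w))$ from the fact that $\phi$ preserves labels. Your second step merely spells out in more detail the bookkeeping (preservation of copies of $(R,r)$ and of the first vertex of $P_{v,w}$ in the relevant copy, via Observation \ref{spinepreservecopies}) that the paper compresses into a one-line appeal to label preservation.
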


\begin{proof}
$\wcol_v(w)=\wcol_{\phi(v)}(\phi(w))$ since $\phi$ preserves labels, and $\wcol_v(w)=\wcol_{\phi(v)}(w)$ by Lermma \ref{globcolpreserv} assuming  that $w$  does not originate from a path starting from $P_{v,\phi(v)}$ with strictly decreasing labels.
\end{proof}

\begin{corollary}\label{embcolpres2}
Let $v \in \wR_0$ be a tree vertex, and $\phi$ an embedding of $\Sp$. Then $\wcol_v(w)=\wcol_v(\phi(w))$ for all but finitely many $w \in \wR_0$. \\
The only possible exceptions are vertices originating from a path starting from $P_{\phi^{-1}(v),v}$ with strictly decreasing labels.
\end{corollary}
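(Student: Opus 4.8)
The plan is to deduce this from Corollary \ref{embcolpreserv1} and Lemma \ref{globcolpreserv}, using crucially that $\phi$ is an automorphism. First I would note, via Observation \ref{spinepreservecopies}, that every embedding of $\Sp$ is surjective, so $\phi$ is a graph automorphism of $\Sp$; consequently $\phi^{-1}$ is also a graph automorphism, and since labels (hence also the tree/ray vertices, the copies of $(R,r)$, and the global colour) are graph properties, both $\phi$ and $\phi^{-1}$ preserve labels, paths, and the copy structure.

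Next I would isolate the two ingredients. From label preservation alone, $\wcol_v(w) = \wcol_{\phi(v)}(\phi(w))$ holds for every $w \in \wR_0$ --- this is precisely the identity already used in the proof of Corollary \ref{embcolpreserv1}, and it follows because $\phi$ restricts to a label-preserving isomorphism between the copy of $(R,r)$ containing $w$ and the one containing $\phi(w)$, carrying $P_{v,w}$ onto $P_{\phi(v),\phi(w)}$. Separately, Lemma \ref{globcolpreserv} applied to the two tree vertices $v$ and $\phi(v)$ gives $\wcol_v(x) = \wcol_{\phi(v)}(x)$ for all but finitely many $x \in \wR_0$, the only possible exceptions being vertices originating from a path starting in $P_{v,\phi(v)}$ with strictly decreasing labels.

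Now I would chain these with $x := \phi(w)$: as long as $\phi(w)$ avoids the finite exceptional set of Lemma \ref{globcolpreserv}, we get $\wcol_v(\phi(w)) = \wcol_{\phi(v)}(\phi(w)) = \wcol_v(w)$. Since $\phi$ is a bijection, only finitely many $w$ have $\phi(w)$ exceptional, giving the ``all but finitely many'' statement. To obtain the precise description of the exceptional $w$, I would pull the description back through $\phi^{-1}$: $\phi(w)$ originates from a path starting in $P_{v,\phi(v)}$ with strictly decreasing labels exactly when $w$ originates from a path starting in $\phi^{-1}\big(P_{v,\phi(v)}\big) = P_{\phi^{-1}(v),\,v}$ with strictly decreasing labels, using that $\phi^{-1}$ preserves labels and maps $P_{v,\phi(v)}$ onto $P_{\phi^{-1}(v),v}$.

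The only point needing any care is this last bookkeeping step --- that the label-preserving automorphism $\phi^{-1}$ carries $P_{v,\phi(v)}$ exactly onto $P_{\phi^{-1}(v),v}$ and preserves the ``strictly decreasing labels'' shape of pendant paths --- but it is immediate since $\phi^{-1}$ is a label-preserving automorphism of a tree. I do not anticipate a genuine obstacle; the corollary is essentially Corollary \ref{embcolpreserv1} read through the inverse automorphism.
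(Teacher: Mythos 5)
Your argument is correct and is essentially the paper's proof: both rest on the surjectivity of $\phi$ (Observation \ref{spinepreservecopies}) together with Corollary \ref{embcolpreserv1} (equivalently, the label-preservation identity plus Lemma \ref{globcolpreserv}). The only cosmetic difference is that the paper applies Corollary \ref{embcolpreserv1} at the base vertex $\phi^{-1}(v)$, so the conclusion $\wcol_v(w)=\wcol_v(\phi(w))$ and the exceptional set $P_{\phi^{-1}(v),v}$ appear immediately, whereas you work at the pair $v,\phi(v)$ and then pull the exceptional set back through the label-preserving automorphism $\phi^{-1}$ --- a harmless extra bookkeeping step.
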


\begin{proof}
Let $v \in \wR_0$ be a tree vertex, $\phi$ an embedding of $\Sp$, and $w$ not originating from a path starting from $P_{\phi^{-1}(v),v}$ with strictly decreasing labels. Recall by Observation \ref{spinepreservecopies} that $\phi$ is surjective. 
Hence,  by Corollary \ref{embcolpreserv1},  $\wcol_{\phi^{-1}(v)}(w)=\wcol_{\phi \circ\phi^{-1}(v)}(w)=\wcol_{\phi \circ\phi^{-1}(v)}(\phi(w))$. 
Thus we conclude that  $\wcol_v(w)=\wcol_v(\phi(w))$.
\end{proof}

Similarly we define a global sign and spin functions. First recall that $\sign_v$ and  $\spin_v $ are undefined at $v$, and we take this into account in the global setting for all copies of $(R,r)$; thus here the global spin at $v$ will not be defined on the first vertex from a path from $v$ to a copy of $(R,r)$.

\begin{definition}\label{globspin}
Let $v \in \wR_0$, and define $\wR_0^v =\{w \in \wR_0:  w$ belongs to a copy $(R',r')$ of $(R,r)$ and  $w$ is not the first vertex of $P_{v,w} \cap R' \}$. 
\begin{enumerate}
\item Define $\wsign_v: \wR_0^v \rightarrow \pm 1$ by $\wsign_v(w) = \sign_{v'}(w)$ where $w$ belongs to a copy $(R',r')$ of $(R,r)$ and  $v'$ is the first vertex of $P_{v,w} \cap R'$.
\item Simiarly define $\wspin_v: \wR_0^v \rightarrow \pm 1$ by $\wspin_v(w) = \spin_{v'}(w)$ where $w$ belongs to a copy $(R',r')$ of $(R,r)$ and  $v'$ is the first vertex of $P_{v,w} \cap R'$.
\end{enumerate}
\end{definition}

Thus in the case that $v$ and $w$ belong to the same copy of $(R,r)$, then $v'=v$, $\wsign_v(w) = \sign_v(w)$  and $\wspin_v(w) = \spin_v(w)$.   

\begin{figure}[ht]
\includegraphics[width=.5\textwidth]{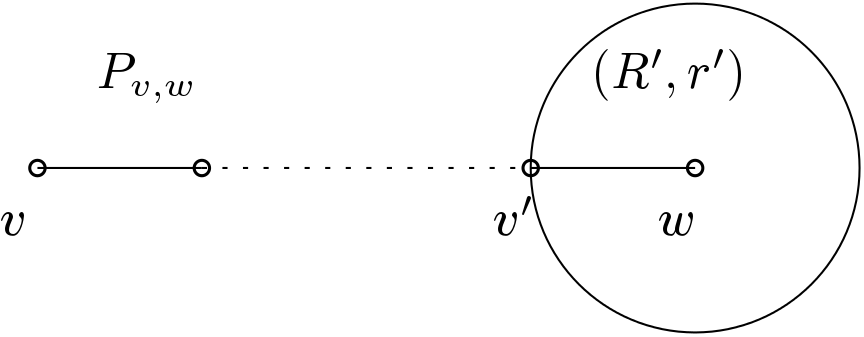}
\caption{Definition \ref{globspin}: The global sign and spin functions: $\wsign_v(w) = \sign_{v'}(w)$ and $\wspin_v(w) = \spin_{v'}(w)$.}
\end{figure}

\begin{lemma}\label{globalspinpreserving}
For any  tree vertices $u,v \in  \wR_0$,  $\wspin_u(w)=\wspin_v(w)$ for all but finitely many $w \in \wR_0^u \cap \wR_0^v$, in fact for all $w \notin P_{u,v}$. 
\end{lemma}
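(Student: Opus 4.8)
The plan is to reduce this global statement to its local predecessor, Corollary~\ref{spinpreserv3}, applied inside a single copy of $(R,r)$. I may assume $u\neq v$. Fix $w\in\wR_0^u\cap\wR_0^v$ with $w\notin P_{u,v}$, let $(R',r')$ be the copy of $(R,r)$ containing $w$, and --- following Definition~\ref{globspin} --- let $u'$ and $v'$ be the first vertices of $P_{u,w}$ and of $P_{v,w}$, respectively, lying in $R'$, so that $\wspin_u(w)=\spin_{u'}(w)$ and $\wspin_v(w)=\spin_{v'}(w)$. The first thing I would record is that each copy $R'$ is convex in the tree $\Sp$ (being a connected subgraph of a tree), so $P_{u',w}\subseteq R'$ and $P_{v',w}\subseteq R'$. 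Since $w\in\wR_0$ lies in the copy $R'$ it is a tree vertex of $R'$, and since $w\in\wR_0^u\cap\wR_0^v$ we have $w\neq u'$ and $w\neq v'$; thus $w$ is a tree vertex of $R'$ distinct from both $u'$ and $v'$ (that $u',v'$ are themselves tree vertices of $R'$, rather than interior vertices of it, is built into the way copies of $(R,r)$ are glued along double rays at their tree vertices).

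If $u'=v'$ there is nothing to prove, so the substance is the case $u'\neq v'$. Here I would bring in the meeting vertex $m$ of the three paths, i.e.\ the unique $m\in P_{u,v}$ with $P_{u,w}\cap P_{v,w}=P_{m,w}$, which gives the tripod decomposition $P_{u,v}=P_{u,m}\cup P_{m,v}$, $P_{u,w}=P_{u,m}\cup P_{m,w}$, $P_{v,w}=P_{v,m}\cup P_{m,w}$. The crucial sub-step is to show $m\in R'$: if $m\notin R'$ then $P_{u,m}$ cannot meet $R'$, since a vertex $a\in P_{u,m}\cap R'$ together with $w\in R'$ would force $m\in P_{a,w}\subseteq R'$ by convexity; likewise $P_{v,m}$ misses $R'$; hence the first vertices of $P_{u,w}$ and of $P_{v,w}$ in $R'$ both lie on the shared tail $P_{m,w}$ and therefore coincide, contradicting $u'\neq v'$. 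Granting $m\in R'$: since $u'$ is the first vertex of $P_{u,w}$ in $R'$ while $m\in P_{u,w}\cap R'$, necessarily $u'\in P_{u,m}\subseteq P_{u,v}$, and symmetrically $v'\in P_{v,m}\subseteq P_{u,v}$; hence $P_{u',v'}\subseteq P_{u,v}$, and $w\notin P_{u,v}$ forces $w\notin P_{u',v'}$. Corollary~\ref{spinpreserv3}(1), applied inside $(R',r')$, then gives $\spin_{u'}(w)=\spin_{v'}(w)$, that is $\wspin_u(w)=\wspin_v(w)$.

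The ``all but finitely many'' clause is then immediate from the ``in fact'' clause, since $P_{u,v}$ is a finite path in the tree $\Sp$ and so contains only finitely many vertices of $\wR_0^u\cap\wR_0^v$. I expect the only real obstacle to be organizational: handling the case split on the position of $m$ relative to $R'$ cleanly, where the convexity of each copy is used twice --- once to transfer the question into a single $(R',r')$ and invoke Corollary~\ref{spinpreserv3}, and once to show $u'=v'$ as soon as $m$ leaves $R'$. No genuinely new idea beyond Corollary~\ref{spinpreserv3} seems to be required.
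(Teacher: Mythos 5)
Your proposal is correct and follows essentially the same route as the paper: pass to the copy $(R',r')$ containing $w$, take the first-entry vertices $u',v'$ of $P_{u,w}$ and $P_{v,w}$ in $R'$, check $w\notin P_{u',v'}$, and invoke Corollary \ref{spinpreserv3} together with Definition \ref{globspin}. The only difference is that you spell out (via the meeting vertex $m$ and convexity of the copy) the step the paper dismisses with ``$w\notin P_{u',v'}$ since $w\notin P_{u,v}$ and $w\neq u',v'$'', which is a faithful filling-in rather than a new argument.
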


\begin{proof}
Suppose $w \notin P_{u,v}$, $(R',r')$ a copy of $(R,r)$ containing $w$, $u'$ is the first vertex of $P_{u,w} \cap R'$ and similarly $v'$ is the first vertex of $P_{v,w} \cap R'$.

\begin{figure}[ht]
\includegraphics[width=.5\textwidth]{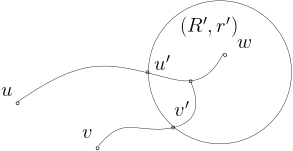}
\caption{Lemma \ref{globalspinpreserving}: $\wspin_u(w) = \spin_{u'}(w)= \spin_{v'}(w) = \wspin_v(w)$.}
\end{figure}

But now $ w \notin P_{u',v'}$ since $w \notin P_{u,v}$ and $w \neq u',v'$. So $\wspin_u(w)=\spin_{u'}(w)=\spin_{v'}(w)=\wspin_v(w)$ by
Corollary \ref{spinpreserv3} and Definition \ref{globspin}.
\end{proof}

\begin{observation}\label{embeddingheightpreserving}
Let $\phi$ an embedding of $\Sp$, then $\whth_v(u)=\whth_{\phi(v)}(\phi(u))$ for all $u,v \in \SD$. 
\end{observation}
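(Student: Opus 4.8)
The plan is to reduce the statement to the already-established fact that embeddings of $\Sp$ preserve labels, ray vertices, tree vertices, and copies of $(R,r)$ (Observation \ref{spinepreservecopies}). Recall that $\whth_v(u)$ is defined as the maximum, over all vertices $w'$ on the path $P_{v,u}$, of $\lab^{R'}(w')$ where $(R',r')$ is the copy of $(R,r)$ containing $w'$ (Definition \ref{globalheight}). The key point is that this quantity depends only on the sequence of labels read off along the unique path $P_{v,u}$ in the tree $\Sp$. So it suffices to show that an embedding $\phi$ sends the path $P_{v,u}$ onto the path $P_{\phi(v)\phi(u)}$ and respects the labels along it.

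First I would note that, since $\Sp$ is a tree and $\phi$ is an embedding, $\phi$ maps the path $P_{v,u}$ isometrically onto a path from $\phi(v)$ to $\phi(u)$; because there is a unique such path in a tree, $\phi(P_{v,u}) = P_{\phi(v),\phi(u)}$ as vertex sets, and $\phi$ restricted to this path is a graph isomorphism of paths preserving the order of traversal. Next, by Observation \ref{spinepreservecopies}, for each vertex $w' \in P_{v,u}$ we have $\lab^{R'}(w') = \lab^{R''}(\phi(w'))$, where $(R',r')$ is the copy containing $w'$ and $(R'',r'')$ the copy containing $\phi(w')$: embeddings of $\Sp$ preserve the label gadgets attached to each vertex, and distinct labels carry non-embeddable gadgets. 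Therefore the multiset (indeed, the ordered sequence) of labels along $P_{v,u}$ equals the ordered sequence of labels along $P_{\phi(v),\phi(u)}$, and taking the maximum of either gives $\whth_v(u) = \whth_{\phi(v)}(\phi(u))$.

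This argument is essentially routine once Observation \ref{spinepreservecopies} is in hand; there is no real obstacle. The only point requiring a word of care is that $\whth$ is defined using the copy-local label $\lab^{R'}$ rather than a single global label function, so one must invoke the fact that $\phi$ preserves copies of $(R,r)$ (again from Observation \ref{spinepreservecopies}) to make sense of comparing $\lab^{R'}(w')$ with $\lab^{R''}(\phi(w'))$; but since every vertex of $\Sp$ lies in a unique copy of $(R,r)$ and $\phi$ carries that copy to a copy, this is unambiguous. The hypothesis $u, v \in \SD$ is not even needed for the argument in this generality — it holds for any $u,v \in \Sp$ — but stating it for ray vertices of the original double ray $\SD$ suffices for later use.
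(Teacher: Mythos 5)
Your proof is correct and matches the paper's intended reasoning: the paper states this as an unproved observation, implicitly relying on exactly what you spell out, namely that an embedding of the tree $\Sp$ carries $P_{v,u}$ onto $P_{\phi(v),\phi(u)}$ and preserves the copy-local labels along it by Observation \ref{spinepreservecopies}, so the maximum label is unchanged. Your remark that the conclusion holds for all $u,v\in\Sp$, not just ray vertices of $\SD$, is also accurate and consistent with how the observation is later used.
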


As observed before, any graph embedding of $\Sp$ is surjective and hence an automorphism, and thus $\sib(\Sp)=1$. The trees  $\ST_s$ will be obtained from $\Sp$ by judiciously setting type assignments to ray vertices of $\Sp$, and as a result  embeddings of $\ST_s$ will preserve the global sign and global spin functions. This will be the main tool in showing that $\sib(\ST)=\sss$, and we are now ready to undertake the construction of the trees $\ST_s$.

\subsection{The trees $\langle \ST_s(k): s < \sss \rangle$}

For a non-zero $\sss \in \N$, the final trees $\langle \ST_s: s < \sss \rangle$  we are seeking to produce will be constructed in similar manners to $\Sp$, as a countable union of trees $\ST_s(k)$ for $k \in \N$. The trees $ \ST_s(k)$  will consist of the spine $\Sp(k)$ together with type assignments to double rays amalgamated to tree vertices of global height at most $k$; interestingly, the only difference among the various $\ST_s(k)$'s is the original type assignment to the first double ray $\SD_s$. We will often simply write $\ST$ for $\ST_0$ and similarly  $\ST(k)$ for  $\ST_0(k)$.

Recall that every vertex of $\Sp$ belongs to a copy  $(R',r')$  of $(R,r)$, each inheriting its own corresponding collection of labels, colours, height, sign  and spin functions   from $\SR$. The notions of colours and height are graph properties, and we have seen in Subsection \ref{rootedtree} how we have encoded the labels as graph properties (through connecting a vertex of label $\ell$ with the root of a gadget being a path of length $2\ell+6$ whose end point is identified with $u \in K_{1,2}=(u,V)$; these finite graphs do not embed into each other as rooted trees unless they are equal). We have also seen in Subsection \ref{rayvertices} how we encode the type assignments as graph properties (through identifying a ray vertex of type 0 with the root of the gadget $PK(2,2)$,  and a ray vertex of type 1 with a leaf of $PK(2,3)$; the type 0 gadgets  embed into type 1 gadgets as a rooted tree, but not the other way around).
It is important to remind ourselves that the label gadgets and the type gadgets must be preserved by embeddings with type gadgets of type 0 possibly embedded in ones of type 1, resulting in one vertex being omitted from the image of the embedding.  It will remain to show how the sign and spin functions can be encoded through graph properties. But first we will build the trees and show how to handle their siblings. 

To assist with the construction, the trees will have a distinguished vertex which we call the {\em centre} of the tree. We define $\ST_s(0)$ by identifying every ray vertex of $\SD_s$ with the root $r'$ of a disjoint copy $(R',r')$ of $(R,r)$ (see Figure \ref{baseTs0}).

\begin{figure}[ht]
\includegraphics[width=.9\textwidth]{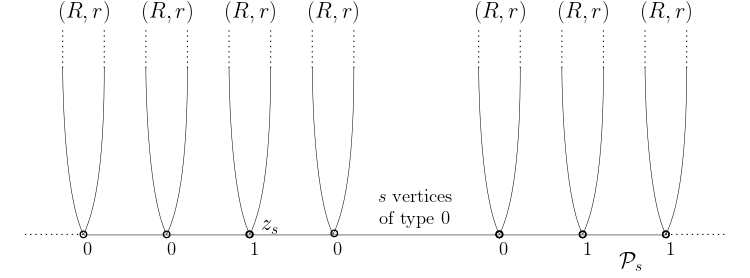}\label{baseTs0}
\caption{$\ST_s(0)$}
\end{figure}

Observe first that the spine of $\ST_s(0)$, that is the tree obtained from $\ST_s(0)$ by deleting the type assignments on ray vertices (on $\SD_s$), is $\Sp(0)$ as previously defined. Moreover, the spine of any sibling of $\ST_s(0)$ is the same $\Sp(0)$ (up to isomorphy), and this is because any self-embeddings of $\ST_s(0)$ will map  $\Sp(0)$  onto itself. Observe further that $\langle \ST_s(0): s<\sss \rangle$ are pairwise non-isomorphic siblings, and have (up to isomorphism) countably many siblings, each one represented  by a type assignment (consisting of only finitely many modifications of $\type_0$) on $\SD_s$, the ray vertices of the spine  $\ST_s(0)$. 
We further define the vertex $z_s \in \SD_s$ as the {\em centre} of the tree $\ST_s(0)$. These will remain the centres of all trees  $\langle \ST_s(k):s < \sss \rangle$ which we are about to define. 

The inductive construction will use a more delicate operation to {\em freely amalgamate} two trees over a common subtree. This situation generally occurs for example when two mathematical objects of similar types share a common substructure and the remaining structure does not interfere with each other, hence one may want to combine them together. We define here the specific case we will need.  

\begin{definition}\label{amalgamate}
Let  $a_0$ be a vertex of a tree $A_0$,  $a_1$ a vertex of a  tree $A_1$. Assume further that $a$ is a vertex of a tree $A$ and we have rooted tree embeddings  $\phi_i:(A,a) \rightarrow (A_i,a_i)$. 

Then we say that a tree $B$ is a \underline{(free) amalgamation} of $(A_0,a_0)$ and $(A_1,a_1)$ over $(A,a)$, or  $(A_0,a_0)$ and $(A_1,a_1)$ \underline{(freely) amalgamate} (over $(A,a)$),  if there are  embeddings  $\phi'_i:A_i \rightarrow B$  such that:
\begin{enumerate}
 \item $B = \phi_0'(A_0)\cup \phi_1'(A_1)$, 
 \item $ \phi_0' \circ \phi_0 = \phi_1' \circ \phi_1$,  and 
 \item $\phi_0'(A_0)\cap \phi_1'(A_1)=\phi_0' \circ \phi_0(A) = \phi_1' \circ \phi_1(A)$.
\end{enumerate}
\end{definition}

\begin{figure}[ht]
\includegraphics[width=.4\textwidth]{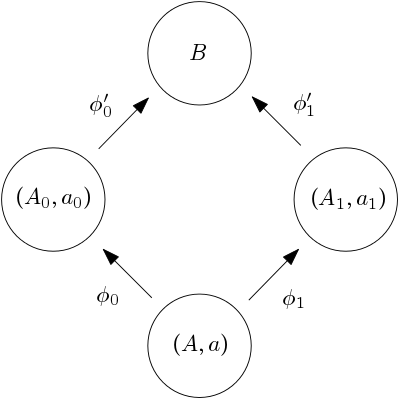}\label{amalgamate}
\caption{$B$ is an {\em amalgamation} of $(A_0,a_0)$ and $(A_1,a_1)$ over $(A,a)$.}
\end{figure}

Note that since $B$ is a tree and $A$ is non-empty, then no edges are added to $\phi_0'(A_0)\cup \phi_1'(A_1)$; the two pieces are simply joined together identifying their common copy of $A$. A simple   example is to observe  that identifying a ray vertex $v$ with the root of a copy of $(R,r)$ as we have done can be expressed as amalgamating a double ray $P$ containing $v$ and $(R,r)$ over $v$ (mapping $v$ to $r$). But we will more generally amalgamate larger trees in the construction, hence the need for the more general concept above. 

The construction will insure that the $\ST_s(k)$ are non-isomophic siblings, and moreover that $\sib(\ST(k))=\aleph_0$ for all $k \in \N$ (and hence 
 $\sib(\ST_s(k))=\aleph_0$  for all $s < \sss$). 
Once $\ST(k)$ has been constructed, we list  (representatives of) the pairwise non-isomorphic siblings of $\ST(k)$ not isomorphic to any $\ST_s(k))$  as $\{S_{k,\ell}: \ell \in \N \}$. Note that by considering those siblings as substructures of $\ST(k)$, they will come equipped with all the structure from $\ST(k)$; in particular we will select in $S_{k,\ell}$ a double ray non-isomorpic to $\SD_0$, and fix the centre $c_{k,\ell}$ of $S_{k,\ell}$ as the first vertex on that double ray having type $1$ followed by a vertex of type $0$ (as is the case for each $z_s$ for $s>0$). This is again for the same reason as before to ensure that no embedding of this double ray  into $\SD_0$  can send centres to centres, and vice versa; this is what will be used to show that the spin is preserved by embeddings. We will later justify the existence of such a double ray in all non-isomorphic siblings of $\ST(k)$.

As mentioned, all siblings of the (eventual) tree $\ST=\ST_0$ will be isomorphic to some  $\ST_s$, and to do so we will amalgamate approximations of those siblings within each $\ST_s(k)$ along the way. It turns out that this will suffice because siblings will differ from $\ST$ by only finitely many type assignments and hence will be captured at some stage. 

\subsubsection{Inductive construction}

The following notions will help better describe the construction.

\begin{definition}\label{targetvert}
\begin{itemize}
\item A  tree vertex $v \in \ST_s(k)$ is called a \underline{target} vertex (of global height $\ell$) if $\whth_{z_s}(v)=\wcol_{z_s}(v)=\ell$ for some $\ell\geq 0$. 
\item A crater (or $\ell$-crater) centered at a target vertrex $v$ of global height $\ell$, written $\SC(v)$,  consists of all vertices of global height less than $\ell$ from $v$. That is $\SC(v)=\{u \in \ST_s(k): \whth_v(u) < \ell \}$.
\item We say that a tree vertex $v \in \ST_s(k)$ has been \underline{amalgamated} if it was part of an amalgamation.  
\end{itemize}
\end{definition}

Thus a target vertex $v$ of global height $\ell$ is the end vertex in a copy of $(R,r)$ of a path $P_{z_s,v}$ originating at the center $z_s$ having its last consecutive pair of highest labels $\ell$ among the path labels and with decreasing labels from that consecutive pair to $v$. Then all vertices in its crater $\SC(v)$ have global height $\ell$ with respect to $z_s$. 

When a tree vertex is amalgamated, it will be identified with a ray vertex from a double ray and provided with a type assignment. The terminology to {\em amalgamate} a tree vertex is thus consistent with that of Definition \ref{spine}.

We consider the root $r$ of $(T,r)$ as a target vertex of (global) height 0, and for each $s< \sss$ it has been amalgamated to the centre of the double ray $\SD_s$, with each ray vertex amalgamated to the root of a copy of $(R,r)$, producing $\ST_s(0)$. There are no other target vertices of global height 0 in $\ST_s(0)$ and thus we can state that all target vertices of $\ST_s(0)$  have been amalgamated up to global height $0$ with respect to their centres $z_s$. 

We now define how to extend each tree $\ST_s(k-1)$ from a stage $k-1$ to $\ST_s(k)$ at the next level $k$; this is done the same way for all $s < \sss$ and as a result all trees rooted at ray vertices on all $\SD_s$ will be identical. Assume that the trees $\langle \ST_s(k-1): s < \sss  \rangle$ have been constructed for some $k \geq 1$, that  all tree vertices are amalgamated up to global height $k-1$ with respect to their centres $z_s$, and that the spine of any sibling of $\ST_s(k-1)$ is (up to isomorphy) $\Sp(k-1)$.

Write $k=2^i(2j+1)$, and consider $S=S_{i,j}$, a sibling of $\ST(i)$, with centre $c=c_{i,j}$ lying on a double ray $\SD$ (so that no embedding into $\SD_0$ can send $c$ to $z=z_0$ and vice versa). Considering $S$ as a substructure of  $\ST(i)$, and hence $\ST(k-1)$ since $i<k$, we can extend $S$ within $\ST(k-1)$ following the inductive construction, and assume that all tree vertices in $S$ are amalgamated up to global height $k-1$ with respect to its center $c$. Now fix $s < \sss$ and consider a {\em target} vertex $v \in\ST_s(k-1)$ such that $\whth_{z_s}(v)=\wcol_{z_s}(v)=k$, and belonging to a copy $(R',r')$ of $(R,r)$. Thus $v$ is not yet amalgamated. Moreover, since $\wcol_{z_s}(v)=k>0$, the last consecutive pair on $P_{z_s,v}$  has labels $k$ and therefore (from the perspective of $v$) all tree vertices having global height less than or equal to $k-1$ with respect to $v$, namely all vertices in its crater $\SC(v)$,  are of global height greater than or equal to $k$ with respect to ${z_s}$, and are thus also not yet amalgamated. 
On the other hand, since $S$ is a substructure of  $\ST(k-1)$, all vertices in $S$ having global height larger than or equal to $k$ with respect to $c$ are not yet amalgamated. Thus both $S$ and $\ST_s(k-1)$ can safely be amalgamated over $(R',r')$ by identifying $v$ and $c$.   The result of the amalgamation will be that all ray vertices in the crater $\SC(v)$ will receive a type assignment from $S$. The same is true of $(\ST(k-1), z_0)$ replacing $(S,c)$. 

\noindent Thus we can proceed as follows, and either amalgamate:
\begin{itemize}
\item  $(\ST_s(k-1),v)$ with $(S,c)$ over $(R',r')$,  if $\wspin_{z_s}(v)=+1$.
\item  $(\ST_s(k-1),v)$ with $(\ST(k-1),z_0)$ over $(R',r')$,  if $\wspin_{z_s}(v)=-1$.
\end{itemize}
Thus if one considers the existing copy $(R',r')$ from $\ST_s(k-1)$ as being rooted at $v$ for a moment, it becomes amalgamated with the copy of  $(R,r)$ from $(S,c)$ (or $(\ST(k-1),z_0)$) rooted at $c$ (or $z_0$ respectively), in particular matching the corresponding  sign functions at $u$ and $c$ from both copies.  Then $v$ becomes amalgamated, and this creates what we call $\ST_{s,v}(k-1)$. Note that we indeed amalgamate $(\ST(k-1),z_0)$ with $(\ST_s(k-1),v)$ for any $s < \sss$, thus the process is the same no matter which  $s$.

\begin{figure}[ht]
\includegraphics[width=.5\textwidth]{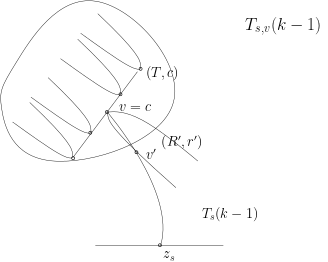}\label{amalgamatet}
\caption{$\ST_{s,v}(k-1)$: amalgamate $(\ST_s(k-1),v)$ with $(T,c)$ over $(R,r)$} 
\end{figure}

Now we repeat the same construction for all target vertices of $\ST_s(k-1)$ of global height $k$ with respect to $z_s$. Observe that any two such distinct target  vertices are separated by a consecutive pair of labels $k$, and therefore the corresponding craters do not intersect and their amalgamation as described above does not interfere with each other. But these amalgamations introduce  new target vertices of global height $k$ with respect to $z_s$ in the resulting amalgamated trees, so we repeat until all such vertices have been amalgamated. First define $\ST_s^0(k-1)=\ST_s(k-1)$, and:
\[ \ST_s^{\ell+1}(k-1)= \bigcup \{ \ST^\ell_{s,v}(k-1):  v \in \ST_s^\ell(k-1) \mbox{ and } \whth_{z_s}(v)=\wcol_{z_s}(v)=k\}.\]
Finally define
\[ \ST_s(k) = \bigcup_{\ell \in \N} \ST_s^\ell(k-1).\]

This completes the inductive construction.

\medskip

Obverse that the construction ensures that the spine of $\ST_s(k)$ is $\Sp(k)$. 
At stage $k$ the trees $\ST_s(k)$  contain the following types of vertices:
\begin{enumerate}
\item Ray vertices of type assignment $0$ or $1$, and all ray vertices have global height at most $k$ with respect to $z_s$; \\
All ray vertices are amalgamated (activated) with the root of a copy of $(R,r)$;
\item Target tree vertices amalgamated to centres of extended trees of the form $S=S_{i,j}$ where $2^i(2j+1)=\ell \leq k$, or of the form $\ST(\ell-1)$ for some $\ell \leq k$; these are tree vertices $v$ such that $\whth_{z_s}(v)=\wcol_{z_s}(v)=\ell$.
\item Amalgamated tree vertices occurring within copies of trees of the same form $S$ or $\ST(\ell-1)$, and themselves amalgamated to a target vertex at its centre;  these are tree vertices $v$ such that $\wcol_{z_s}(v) < \whth_{z_s}(v)=\ell \leq k$.
\item Not yet amalgamated target vertices, these are tree vertices $v$ such that $\whth_{z_s}(v)=\wcol_{z_s}(v)>  k$.
\item Not yet amalgamated tree vertices, these are tree vertices $v$ such that $\whth_{z_s}(v)>  k$.
\end{enumerate}

Note also that $\ST_s(k)$ is the disjoint union of the craters centered at target vertices, and this will be useful in discussing and creating embeddings. 
In the next section we will justify the construction, in particular showing that the number of siblings of each $\ST_s(k)$ is countable.

\subsubsection{Justification of the inductive construction}
At this point the trees $\ST_s(k)$ are equipped with (finite trees coding) labels on all vertices in copies of $(R,r)$, (finite trees coding) type assignments  on ray vertices,  and signs and spins functions on tree vertices. Due to the finite trees we have seen that the first two notions are graph properties and are thus preserved by (graph) embeddings. We now show that (graph) embeddings also preserve amalgamated and non-amalgamated vertices, and it will remain to show that the (global) sign and spin functions can also be recovered from the graph structure. 

\begin{lemma}\label{embedkcopies} 
Let $\phi$ be a (graph)  self-embedding of $\ST(k)$ (with centre $z=z_0$).

\begin{enumerate}
\item  Then $\phi$ preserves amalgamated tree vertices; that is maps amalgamated tree vertices to amalgamated tree vertices, and similarly un-amalgamated tree vertices to un-amalgamated tree vertices.
\item If $\whth_z(\phi(z)) \leq \ell \leq k$, then $\restrict \phi \ST(\ell)$ is a self-embedding of $\ST(\ell)$.
\end{enumerate}
\end{lemma}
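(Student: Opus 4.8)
The plan is to exploit the structural classification of vertex types in $\ST(k)$ listed just before the lemma, together with the fact (Observation \ref{spinepreservecopies} plus the gadget arguments) that any graph self-embedding $\phi$ of $\ST(k)$ is surjective, preserves labels, preserves ray/tree vertices, preserves copies of $(R,r)$, and preserves type assignments on ray vertices. Since $\phi$ is surjective it is an automorphism, so it suffices to argue that $\phi$ sends amalgamated tree vertices \emph{onto} the set of amalgamated tree vertices; the un-amalgamated case then follows by taking complements.

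For part (1), first I would pin down a graph-theoretic characterization of ``amalgamated tree vertex'' that $\phi$ must respect. A tree vertex is amalgamated precisely when it has been identified with a ray vertex of some double ray, i.e.\ precisely when it carries a type gadget ($PK(2,2)$ or $PK(2,3)$) in addition to its label-$0$ gadget. Since embeddings send type gadgets to type gadgets of the same or larger type and label gadgets to label gadgets (these families are pairwise non-embeddable as rooted trees, so in fact equal here), a tree vertex carries a type gadget iff its image does; hence $\phi$ maps the set of amalgamated tree vertices bijectively onto itself. The un-amalgamated tree vertices are exactly the tree vertices with no type gadget attached, so they are preserved as well. This is the clean way to see (1), and it avoids any reference to the more delicate spin data.

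For part (2), suppose $\whth_z(\phi(z)) \le \ell \le k$. I would use Observation \ref{embeddingheightpreserving}, which gives $\whth_v(u) = \whth_{\phi(v)}(\phi(u))$ along the central double ray, together with Observation \ref{hthpreserv}: for any vertex $u$ with $\whth_z(u) \le \ell$, since $\min\{\whth_z(u),\whth_z(\phi(z))\}$ is $\le \ell$, one deduces $\whth_z(\phi(u)) = \whth_{\phi(z)}(\phi(u)) \le \ell$ as well, and symmetrically (using surjectivity of $\phi$ to write any vertex as $\phi(u)$). Thus $\phi$ maps the set $\{u : \whth_z(u) \le \ell\}$ into itself. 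Now $\ST(\ell)$ is obtained from $\ST(k)$ by deleting exactly the amalgamations performed at stages $\ell+1,\dots,k$, i.e.\ the type assignments on ray vertices lying in craters of target vertices of global height $>\ell$; equivalently, $\ST(\ell)$ consists of the spine together with the type assignments on all ray vertices of global height $\le \ell$ with respect to $z$, plus the amalgamated subtrees hanging on target vertices of global height $\le \ell$. Since $\phi$ preserves global height along the central ray (and more generally respects the crater decomposition by the height-preservation observations), and preserves amalgamated versus un-amalgamated tree vertices by part (1), the restriction $\restrict{\phi}{\ST(\ell)}$ maps $\ST(\ell)$ into $\ST(\ell)$ and is visibly an embedding there.

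The main obstacle I expect is part (2): one must be careful that $\ST(\ell)$ really is the substructure of $\ST(k)$ cut out by the condition ``global height $\le \ell$ with respect to $z$'' \emph{together with} keeping only the stage-$\le\ell$ amalgamated pieces, and that $\phi$ respects this cut. The subtlety is that a crater of a target vertex of height $\ell$ straddles the boundary, and an embedding could in principle shuffle craters around; the resolution is that Observations \ref{embeddingheightpreserving} and \ref{hthpreserv} force $\phi$ to preserve the height-$\le\ell$ region setwise and to map target vertices of a given global height to target vertices of the same global height, so the induced map on $\ST(\ell)$ is well defined. Part (1), by contrast, is essentially immediate from the rooted-tree non-embeddability of the label and type gadgets.
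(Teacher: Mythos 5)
Your argument rests on the claim that a (graph) self-embedding $\phi$ of $\ST(k)$ is surjective, hence an automorphism, so that the un-amalgamated half of (1) follows ``by taking complements.'' That claim is false, and it is load-bearing. Observation \ref{spinepreservecopies} gives surjectivity only for embeddings of the spine $\Sp(k)$ (equivalently, the similarity induced on $\Sp(k)$), not for $\ST(k)$ itself: the paper explicitly exhibits proper self-embeddings of $\ST(k)$ (translations along $\SD_0$, which push a type-$0$ ray vertex onto a type-$1$ one), and Lemma \ref{embfinite} only asserts that $\ST(k)\setminus\phi(\ST(k))$ is \emph{finite}, not empty. Because $\phi$ need not be onto, your gadget argument only gives one direction: if $v$ carries a type gadget then so does $\phi(v)$ (this parallels the paper's degree argument, amalgamated vertices having degree $6$ versus $3$). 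It does not rule out that an un-amalgamated $v$ is mapped to an amalgamated $\phi(v)$ whose ray neighbours and type gadget simply lie outside the image of $\phi$. The paper closes exactly this gap with a height argument, which your proposal is missing for part (1): since $z$ is amalgamated, so is $\phi(z)$ (by the forward direction), hence $\whth_z(\phi(z))\leq k$; if $v$ is un-amalgamated then $\whth_z(v)>k$, labels (hence heights) are preserved so $\whth_{\phi(z)}(\phi(v))>k$, and therefore $\whth_z(\phi(v))>k$, i.e.\ $\phi(v)$ is un-amalgamated.

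For part (2) your plan is essentially the paper's (preserve heights, then combine with $\whth_z(\phi(z))\leq\ell$), but the appeal to Observation \ref{hthpreserv} is misapplied: that observation requires the \emph{minimum} of the two heights to be at least $\whth_u(v)$, and the equality $\whth_z(\phi(u))=\whth_{\phi(z)}(\phi(u))$ you assert need not hold. What is needed (and what the paper uses implicitly) is only the inequality $\whth_z(\phi(u))\leq\max\{\whth_z(\phi(z)),\whth_{\phi(z)}(\phi(u))\}\leq\ell$, coming from $P_{z,\phi(u)}\subseteq P_{z,\phi(z)}\cup P_{\phi(z),\phi(u)}$; the ``symmetric'' direction you invoke via surjectivity is again unjustified, though it is also unnecessary for the statement. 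With the surjectivity claim removed and the un-amalgamated half of (1) rerouted through the height argument, the rest of your outline is compatible with the paper's proof.
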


\begin{proof}
Due to labels, tree vertices are sent to tree vertices. Moreover, an amalgamated tree vertex has been identified with a ray vertex and hence has degree 6: two neighbours as a tree vertex in a copy of $(R,r)$, one neighbour on the finite path corresponding to label 0, two neighbours as a ray vertex, and one more on the finite tree corresponding to its type. Thus  it cannot be sent to an un-amalgamated tree vertex which has only degree 3.  Now the center $z$ is itself amalgamated, thus so is $\phi(z)$, and hence $\whth_z(\phi(z) \leq k$. Thus  if $v \in \ST(k)$ is an un-amalgamated tree vertex, then $\whth_z(v)>k$, and hence  $\whth_{\phi(z)}(\phi(v))>k$.  But this implies that  $\whth_z(\phi(v))>k$ and hence $\phi(v)$ is un-amalgamated. 

For (2), assume that $\whth_z(\phi(z)) \leq \ell \leq k$. Then for any vertex $v$, $\whth_z(v) \leq \ell$ implies that   $\whth_{\phi(z)}(\phi(v))\leq \ell$, and hence    $\whth_z(\phi(v))\leq \ell$. That is  $\phi(\ST(\ell))\subseteq \ST(\ell)$. 
\end{proof}

To show that the construction is justified, we also neeed to show that the number of siblings of  $\ST(k)$ (and hence of each $\ST_s(k)$) is at most countable, and thus we seek to understand embeddings of $\ST(k)$. Recall that the spine of  $\ST(k)$ is $\Sp(k)$, and  an embedding of $\ST(k)$ induces a surjective embedding of $\Sp(k)$, and as noted in Observation \ref{spinepreservecopies} preserves ray and tree vertices as well as copies of $(R,r)$.  We now describe the exact nature of self embeddings of $\Sp(k)$, called {\em similarities}. As such,  embeddings of $\ST(k)$  induce a unique similarity on $\Sp(k)$; we will show this implies that the sign and spin functions are indeed embedded as graph properties, and this will also allow to control the number of siblings. 

\begin{definition}\label{similarity}
\begin{enumerate}
\item The \underline{fingerprint} of a path $P_{u,v}= \langle u=u_0,$ $u_1, \ldots,$ $u_n=v \rangle$ for $u,v \in \Sp(k)$ is the sequence of symbols $\langle f_0,f_1,\ldots,f_n \rangle$ such that for each $i\leq n$:
\begin{itemize}
\item $f_i = ``\sign^{R'}_{u_i}(u_{i+1}) ''   $ if $i <n$ , both $u_i$ and $u_{i+1}$ are in a copy $(R',r')$ of $(R,r)$ and $u_i$ is the first element of $R' \cap P_{u,v}$;
\item $f_i = ``\lab^{R'}(u_i)''$ if $u_i$ is in a copy $(R',r')$ of $(R,r)$, $i=n$ or $u_i$ is not the first element of $R' \cap P_{u,v}$;
\item $f_i = ``<''$ (resp. $ `` >''$) if $i <n$ , both $u_i$ and $u_{i+1}$ are ray vertices and $u_i < u_{i+1}$ (resp $u_i > u_{i+1}$)  (considered as elements of the double ray $\Z$.
\end{itemize}
\item \label{simildef} A map $\Phi = \Phi^u$ of $\Sp(k)$ is called a \underline{similarity} at the amalgamated vertex  $u \in\wR_0$ if the fingerprints of $P_{u,v}$ and $P_{\Phi(u),\Phi(v)}$ are equal for all amalgamated $v \in \Sp(k)$.
\end{enumerate}
\end{definition}

\begin{lemma}\label{similaritymap}
Let $u, v \in\wR_0$ be amalgamated vertices. Then there is a unique similarity map $\Phi=\Phi_{u,v}$ of $\Sp(k)$  such that $\Phi(u)=v$, and such a similarity map is a self embedding of $\Sp(k)$. \\
Moreover:
\begin{enumerate}
\item \label{similarityspin}
 $\wspin_{u}(w)=\wspin_v(w)=\wspin_v(\Phi(w))$ for all $w \in \wR_0^u \cap \wR_0^v$ except possibly for $w \in P_{u,v}$, and further equals 
$\wspin_u(\Phi(w))$ except possibly for $ w \in  P_{\Phi^{-1}(u),\Phi^{-1}(v)}$.
 
\item \label{similaritycolour}
$\wcol_{u}(w)=\wcol_v(w)=\wcol_u(\Phi(w)) =\wcol_v(\Phi(w))$ for all $w \in \wR_0^u \cap \wR_0^v$ except possibly those originating from a path starting from $P_{u,v}$ with strictly decreasing labels. 
\end{enumerate}
\end{lemma}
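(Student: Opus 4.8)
The plan is to build the similarity map $\Phi_{u,v}$ explicitly and verify it has the required properties. Since $\Sp(k)$ is a tree and $u,v$ are both amalgamated vertices, I would define $\Phi$ by specifying, for each vertex $w$, the image $\Phi(w)$ as the unique vertex reached from $v$ by a path whose fingerprint equals that of $P_{u,w}$. The key structural fact to establish first is that this is well-defined: given the fingerprint of a path from an amalgamated vertex, there is exactly one vertex at its end. This follows from Observation \ref{spinepreservecopies} (embeddings/navigation preserve copies of $(R,r)$, ray and tree vertices) together with the internal structure of $(R,r)$ from Subsection \ref{rootedtree} — namely that within a copy of $(R,r)$, knowing the sign at the entry vertex and the subsequent labels along a path pins down the path (by Observation \ref{symmetry} all the symmetry of $(R,r)$ is captured by exchanging neighbourhoods, which the sign symbol records), and that navigation between copies is forced through pairs of consecutive ray vertices (label $0$), with the $<,>$ symbols recording direction along double rays. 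I would also check that $\Phi$ maps amalgamated vertices to amalgamated vertices — this is where the requirement that $u,v$ be amalgamated is used, since amalgamation status is detectable from $\whth$ relative to the centre, and is preserved by the fingerprint data.

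Next I would verify $\Phi$ is a self-embedding of $\Sp(k)$: adjacency is preserved because consecutive symbols in a fingerprint encode a single edge (either a tree-edge inside a copy of $(R,r)$, or a ray-edge), and injectivity follows from well-definedness applied in both directions, giving $\Phi_{v,u}$ as a two-sided inverse (hence $\Phi$ is actually an automorphism, consistent with Observation \ref{spinepreservecopies} that all self-embeddings of $\Sp(k)$ are surjective). Uniqueness is immediate: any similarity sending $u$ to $v$ must, by definition, send each amalgamated $w$ to the unique vertex with the prescribed fingerprint from $v$, and since amalgamated vertices are dense enough (every vertex of $\Sp(k)$ lies in a crater with an amalgamated target vertex, and copies of $(R,r)$ are determined by their amalgamated vertices) this forces the map everywhere.

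For part (\ref{similaritycolour}), the equality $\wcol_u(w) = \wcol_v(w)$ off the stated exceptional set is exactly Lemma \ref{globcolpreserv} applied to the amalgamated tree vertices $u,v$. The equalities $\wcol_u(w) = \wcol_u(\Phi(w))$ and $\wcol_v(w) = \wcol_v(\Phi(w))$ should follow because $\Phi$ preserves fingerprints, and the global colour $\wcol_u(w)$ is computable from the fingerprint of $P_{u,w}$ (it is the label of the last consecutive pair, read off from the label symbols in the fingerprint within the last copy of $(R,r)$ entered); combining these with Lemma \ref{globcolpreserv} again (now applied to $\phi$-images, using surjectivity of $\Phi$ as in Corollary \ref{embcolpres2}) yields all four quantities are equal off a finite exceptional set of the stated form. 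Here I would be careful to track how the exceptional sets compose — exceptions from $P_{u,v}$ and from $P_{\Phi^{-1}(u),\Phi^{-1}(v)}$ — but since $\Phi$ is an automorphism and $P_{u,v}$ is finite these remain finite and of the claimed shape.

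Part (\ref{similarityspin}) is the main obstacle. The first equality $\wspin_u(w) = \wspin_v(w)$ for $w \notin P_{u,v}$ is Lemma \ref{globalspinpreserving}. The harder equalities $\wspin_v(w) = \wspin_v(\Phi(w))$ and $\wspin_u(w) = \wspin_u(\Phi(w))$ require showing that the global spin is recoverable from the fingerprint. This is delicate because $\spin_v(u)$ in Definition \ref{spinsign} depends on $\sign_v(u)$ together with the parity of the number of consecutive pairs and tree vertices along $P_{v,u}$ — all of which are encoded in the fingerprint (the sign symbol at the entry vertex of each copy, the label symbols from which consecutive pairs are read, and the number of label-$0$ symbols counting tree vertices) — but the bookkeeping in the global setting, crossing multiple copies of $(R,r)$ via double rays, mirrors the intricate case analysis of Lemma \ref{spinpreserv} and Corollary \ref{spinpreserv3}. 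My approach would be: show $\wspin_u(w)$ is a function of $\mathrm{fingerprint}(P_{u,w})$ alone (by induction on the number of copies of $(R,r)$ crossed, using Definition \ref{globspin} and the local formula), then since $\Phi$ preserves fingerprints conclude $\wspin_u(w) = \wspin_u(\Phi(w))$ wherever both are defined, i.e. off $P_{\Phi^{-1}(u),\Phi^{-1}(v)}$ where the path structure from $u$ could be altered; combine with $\wspin_u = \wspin_v$ off $P_{u,v}$ to get the chain of equalities. The exceptional sets $P_{u,v}$ and $P_{\Phi^{-1}(u),\Phi^{-1}(v)}$ arise precisely as the places where Lemma \ref{globalspinpreserving} (resp. its $\Phi$-transported version) fails, matching the statement.
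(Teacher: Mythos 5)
Your proposal is correct and follows essentially the same route as the paper: define $\Phi_{u,v}$ by matching fingerprints of paths from $u$ and from $v$ (using that $u,v$ are amalgamated so all relevant tree vertices are amalgamated up to global height $k$), note that uniqueness is immediate from the definition of similarity, and obtain parts (\ref{similarityspin}) and (\ref{similaritycolour}) by observing that $\wspin$ and $\wcol$ along a path are computable from its fingerprint, combined with Lemma \ref{globalspinpreserving} and Lemma \ref{globcolpreserv}/Corollary \ref{embcolpreserv1} to compare base points, which produces exactly the exceptional sets $P_{u,v}$ and $P_{\Phi^{-1}(u),\Phi^{-1}(v)}$. Your extra care about well-definedness of the fingerprint-defined map and the two-sided inverse $\Phi_{v,u}$ only elaborates what the paper states tersely.
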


\begin{proof}
Let $u, v \in\wR_0$ be amalgamated vertices and set $\Phi(u)=v$. Thus $\whth_z(u) \leq k$ and $\whth_z(v) \leq k$, and therefore all tree vertices are amalgamated to global height at most $k$ with respect to either $z$, $u$, or $v$.  
Hence for any vertex $w$, there exists a uniqe way to define its image  $\Phi(w)$ so that  $P_{u,w}$ and $P_{\Phi(u)=v,\Phi(w)}$ have the same fingerprints; this defines the unique similarity $\Phi_{u,v}$. Note that by definition $\Phi_{u,v}$  preserves the sign function and all graph properties. 

Now for $w \in R_0^u \cap R_0^v$, $\wspin_{u}(w) = \wspin_v(\Phi(w))$ simply due to the paths having the same fingerprints. 
Further,  $\wspin_{u}(w)=\wspin_v(w)$ for all $w \notin P_{u,v}$ by Lemma \ref{globalspinpreserving}
and by the same lemma $\wspin_{u}(\Phi(w))=\wspin_v(\Phi(w))$ for all $\Phi(w) \notin P_{u,v}$, that is $w \notin P_{\Phi^{-1}(u),\Phi^{-1}(v)}$.

Similarly, by  Corollary \ref{embcolpreserv1}, $\wcol_{u}(w)=\wcol_{v}(w)= \wcol_{v}(\Phi(w)) $ except possibly those originating from a path starting from $P_{u,v}$ with strictly decreasing labels. Finally, $\wcol_{u}(w)=\wcol_{v}(\Phi(w))$ due to the paths having the same fingerprints. 
\end{proof}

We now come to the main lemma, showing that the sign function is preserved on amalgamated vertices  by graph embeddings of $\ST(k)$.

\begin{lemma}[MAIN Lemma]\label{similarityemb}
If  $\phi:\ST(k) \rightarrow \ST(k)$ is a (graph) embedding, then $ \restrict \phi \Sp(k)$ is a similarity. In particular (graph) embeddings of $\ST(k)$ preserve the sign function on amalgamated vertices.
\end{lemma}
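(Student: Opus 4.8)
The plan is to show that any graph embedding $\phi:\ST(k)\rightarrow\ST(k)$, restricted to the spine $\Sp(k)$, agrees with the unique similarity $\Phi_{z,\phi(z)}$ promised by Lemma \ref{similaritymap}, where $z=z_0$ is the centre. By Observation \ref{spinepreservecopies} we already know $\restrict\phi{\Sp(k)}$ is a surjective self-embedding of $\Sp(k)$ preserving labels, ray and tree vertices, and copies of $(R,r)$; by Lemma \ref{embedkcopies} it preserves amalgamated versus un-amalgamated tree vertices. So the content is to control the behaviour of $\phi$ along paths from the centre: I need to check that for every amalgamated $v$, the fingerprints of $P_{z,v}$ and $P_{\phi(z),\phi(v)}$ coincide. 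Since labels, the ``$<$''/``$>$'' ray-direction symbols (these are graph properties, encoded by the type gadgets and the direction they impose), and the copy structure are all already preserved, the only fingerprint symbols in question are the sign symbols $\sign^{R'}_{u_i}(u_{i+1})$ occurring when a path first enters a copy of $(R,r)$ at a tree vertex $u_i$ and proceeds to a neighbour $u_{i+1}$ in the same copy. Equivalently, the whole lemma reduces to: $\phi$ preserves the (global) sign function on amalgamated vertices.

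The key mechanism for this is the design of the double rays $\SD_s$ and the siblings' chosen centres: no embedding of a double ray into $\SD_0$ can send its centre to $z=z_0$, and the construction amalgamated $(\ST(k-1),z_0)$ onto every $\wspin_{z_s}$-negative target vertex and a genuinely non-isomorphic sibling $S$ onto every $\wspin_{z_s}$-positive target vertex. So I would argue by induction on the height-stage $k$. First I would use Lemma \ref{embedkcopies}(2): since $z$ is amalgamated, $\whth_z(\phi(z))\leq k$, and choosing $\ell$ minimal with $\whth_z(\phi(z))\leq\ell$ lets me restrict $\phi$ to a self-embedding of $\ST(\ell)$, reducing to the situation where $\phi(z)$ lies in a crater of controlled height. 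Then, at a target vertex $v$ of global height exactly $m\leq k$, the crater $\SC(v)$ together with its amalgamated double-ray structure is a copy of either $S_{i,j}$ (if $\wspin_{z}(v)=+1$) or of $\ST(m-1)$ (if $\wspin_{z}(v)=-1$); since $\phi$ preserves all graph properties and the first kind contains a double ray non-isomorphic to $\SD_0$ whose centre is marked, while the second does not, $\phi$ cannot carry a $+1$ target vertex's crater into a $-1$ target vertex's crater or vice versa. This is what forces the sign symbol to be respected at each such entry point, and hence forces the fingerprints to match.

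The main obstacle I anticipate is the bookkeeping needed to pass between the local sign functions $\sign^{R'}$ inside individual copies of $(R,r)$ and the global $\wsign$, together with handling the ``exceptional'' vertices (those on $P_{z,\phi(z)}$, or on decreasing-label paths off it) that Lemmas \ref{spinpreserv}, \ref{globalspinpreserving} and Corollaries \ref{spinpreserv3}, \ref{embcolpreserv1} keep flagging. These finitely many exceptions must be shown not to spoil the argument: because the double-ray centre markers are sprinkled throughout the tree at every height, even an embedding that shifts the centre along a path can be ``recalibrated'' by composing with the similarity $\Phi_{z,\phi(z)}$ and checking the composite fixes all graph properties and is the identity on the relevant gadget-encoded data, whence $\restrict\phi{\Sp(k)}=\Phi_{z,\phi(z)}$ is itself a similarity. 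Once $\restrict\phi{\Sp(k)}$ is a similarity, the final sentence is immediate: by Definition \ref{similarity}\eqref{simildef} similarities preserve fingerprints, so in particular they preserve each $\sign^{R'}_{u_i}(u_{i+1})$, i.e. the sign function on amalgamated vertices is preserved by $\phi$.

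I would structure the write-up as: (1) reduce to showing fingerprints of $P_{z,v}$ and $P_{\phi(z),\phi(v)}$ agree for amalgamated $v$, using Observation \ref{spinepreservecopies} and Lemma \ref{embedkcopies}; (2) observe all fingerprint symbols except the sign symbols are automatically preserved (graph-encoded); (3) induct on $k$, using the crater decomposition of $\ST(k)$ and the $S$-versus-$\ST(m-1)$ dichotomy distinguished by the presence of a centre-marked double ray non-isomorphic to $\SD_0$, to pin down the sign symbol at each copy-entry point; (4) conclude $\restrict\phi{\Sp(k)}=\Phi_{z,\phi(z)}$ and read off preservation of the sign function.
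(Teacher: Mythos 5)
Your overall reduction (everything in the fingerprint except the sign symbols is already graph-encoded, so the lemma amounts to sign preservation at amalgamated vertices) and your identification of the key mechanism (spin $-1$ targets carry $(\ST(k-1),z_0)$, spin $+1$ targets carry a sibling $S$ whose marked centre lies on a ray that cannot be exchanged with $\SD_0$ centre-to-centre) match the paper. But the way you propose to exploit this has two genuine gaps. First, your distinguishing criterion is wrong as stated: you claim the crater of a $+1$ target ``contains a double ray non-isomorphic to $\SD_0$ whose centre is marked, while the second [a copy of $\ST(m-1)$] does not.'' For $m\geq 2$ the copy of $\ST(m-1)$ contains many such rays itself (Corollary \ref{nonisop0} records that already $\ST(1)$ has infinitely many), so presence versus absence inside the crater distinguishes nothing. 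The feature that actually separates the two kinds of target is purely local: \emph{which} double ray is amalgamated with the target vertex itself as its centre, together with the fact that an embedding must send that centre to the centre of the ray amalgamated at the image vertex, and no centre-to-centre embedding exists between the two kinds.

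Second, your induction on $k$ tacitly assumes that $\phi$ carries the crater of a target vertex into the crater of a target vertex of the same height, but nothing available at this stage gives that; indeed Lemma \ref{embfinite}, which is proved \emph{after} and \emph{using} the Main Lemma, must deal precisely with the cases $\whth_z(\phi(u))\neq\whth_z(u)$. Relatedly, the step that turns ``amalgams at target vertices are respected'' into ``the sign symbol is preserved at an arbitrary entry vertex $w$ on a path $P_{u,v}$'' is missing from your outline. The paper supplies it with a direct (non-inductive) witness construction: inside the copy $(R',r')$ entered at $w$ it chooses target vertices $w_0,w_1$ in the two different neighbourhoods of $w$, reached by identical concatenations of unimodal label sequences, with $\whth_u(w_i)=\wcol_u(w_i)=\whth_z(w_i)=\wcol_z(w_i)=k$ and avoiding the finitely many decreasing-label exceptional paths from $P_{u,z}$, $P_{u,\phi(u)}$, $P_{z,\phi(u)}$; label preservation forces $\phi(\{w_0,w_1\})=\{\widehat w_0,\widehat w_1\}$, the same-label-sequence vertices at $\phi(w)$, and the centre rigidity of the amalgamated rays then forces $\phi(w_0)=\widehat w_0$, $\phi(w_1)=\widehat w_1$, which is exactly sign preservation at $w$. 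Your ``recalibrate by composing with $\Phi_{z,\phi(z)}$'' remark gestures at the exceptional-vertex issue but does not replace this construction, so as written the argument does not close.
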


\begin{proof}
 We have already observed that (graph) embeddings do preserve labels and the natural direction on double rays. Thus it remains to prove that a graph embedding  of $\ST(k)$ preserves the sign function on amalgamated vertices.  
  
Let $\phi$ be a  graph embedding of $\ST(k)$. We must show, without loss of generality, that if $w$ is the first element in a copy $(R',r')$ of $(R,r)$ on $P_{u,v}$ for some amalgamated vertices $u,v$, then $\sign$ is preserved at $w$. Such a $w$ is an amalgamated tree vertex (even if $w=u$), and this implies  $\whth_z(w) \leq k$ and thus  $\whth_u(w) \leq k$. Hence  $\phi(w)$ is the first element in a copy $(R'',r'')$ of $(R,r)$ on $P_{\phi(u),\phi(v)}$, $\phi(w)$ is an amalgamated tree vertex (even if $w=u$),  and thus  $\whth_z(\phi(w)) \leq k$  as well as $\whth_{\phi(u)}(\phi(w)) \leq k$.  Both $w$ and $\phi(w)$ have two neighbours in $R'$ and $R''$ respectively.  Consider target vertices $w_0, w_1 \in R'$ such that:
\begin{enumerate}
\item $w_0$ and $w_1$ are in different neighbourhoods of $w$;  
\item $P_{w,w_0}$ and $P_{w,w_1}$ have the same label sequence; 
\item \label{notonpaths} If $\widehat{w}_0$ and $\widehat{w}_1$ are the vertices in $R''$ having the same label sequences as  $w_0$ and $w_1$ from $\phi(w)$, 
then $w_0$, $w_1$, $\widehat{w}_0$ and $\widehat{w}_1$  are not on paths of decreasing labels from $P_{u,z}$, $P_{u,\phi(u)}$, or $P_{z,\phi(u)}$;
\item $\whth_u(w_i)=\wcol_u(w_i)=\whth_z(w_i)=\wcol_z(w_i)=k$ for each $i$.
\end{enumerate}

\begin{figure}[ht]
\includegraphics[width=.8\textwidth]{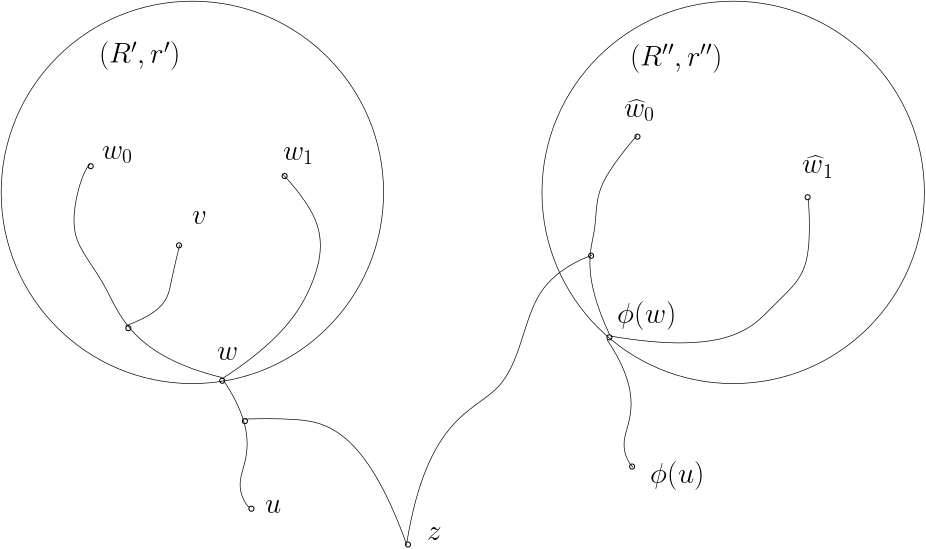}\label{fig:simi}
\caption{Lemma \ref{similarityemb}: Spin preserving.}
\end{figure}

This can be accomplished by choosing $w_i$ along paths formed by $\ell$ concatenated paths of unimodal labels $\langle 012\cdots (k-1)kk(k-1)\cdots 10\rangle$ starting at $w$ in different neighbourhoods of $w$ for some $\ell$ large enough to satisfy item \ref{notonpaths}. 

Wlog $\spin_w(w_0)=-1$ and hence $\spin_w(w_1)=+1$. But $\wspin_z(w_0)=\wspin_{u}(w_0)=\spin_w(w_0)=-1$ by Lemma \ref{globalspinpreserving} since $w_0 \notin P_{u,z}$, and similarly $\wspin_z(w_1)=\wspin_{u}(w_1)=\spin_w(w_1)=+1$. This means $w_0$ is amalgamated to the centre of a copy of the double ray $\SD_0$ (from $\ST(k-1)$), and $w_1$ to the centre of a double ray (from $T_k$) that cannot embed into $\SD_0$ preserving their centres, and vice-versa. 
The corresponding vertices $\widehat{w}_0, \widehat{w}_1  \in R''$ are those starting at $\phi(w)$ with the same label sequence. 
But $\wspin_z(\widehat{w}_i)=\spin_{\phi(w)}(\widehat{w}_i)$ since $\widehat{w}_i \notin P_{z,\phi(w)}$. Thus wlog $\wspin_z(\widehat{w}_0)=-1$ and hence $\widehat{w}_0$ is amalgamated to the centre of a copy of the double ray $\SD_0$, and $\widehat{w}_1$ to the centre of a double ray that cannot embed into $\SD_0$ preserving their centres, and vice-versa. Hence there is no alternative but $\phi$ sending $w_0$ to $\widehat{w}_0$ and similarly $w_1$ to $\widehat{w}_1$. 
But this means that $\sign$ is preserved at $w$ as desired. 
\end{proof} 

We have already observed that any sibling of  $\ST(k)$ contains (a copy of) $\Sp(k)$, and hence the above result immediately carries to siblings of $\ST(k)$.

\begin{corollary}\label{similarityembsibling}
If  $\SSS$ and $\SSS'$ are siblings of $\ST(k)$, then any embedding $\phi: \SSS \rightarrow \SSS'$ induces a similarity on $\Sp(k)$. 
\end{corollary}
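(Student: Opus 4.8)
The plan is to reduce the statement to the Main Lemma \ref{similarityemb} by transporting everything through copies of the spine. Let $\SSS$ and $\SSS'$ be siblings of $\ST(k)$ and let $\phi:\SSS\rightarrow\SSS'$ be an embedding. First I would recall that, by construction, both $\SSS$ and $\SSS'$ contain a copy of $\Sp(k)$ as their ``spine'': deleting the type-assignment gadgets from a sibling of $\ST(k)$ yields $\Sp(k)$ up to isomorphism, since this was built into the inductive hypothesis (the spine of any sibling of $\ST_s(k-1)$ is $\Sp(k-1)$, and the construction of $\ST_s(k)$ preserves this). Fix isomorphisms $\iota:\Sp(k)\hookrightarrow \SSS$ and $\iota':\Sp(k)\hookrightarrow \SSS'$ onto these spines. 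Since $\SSS$ and $\SSS'$ are themselves siblings of $\ST(k)$, an embedding between them is in particular an embedding of graphs that, by the label gadgets and type gadgets, must send tree vertices to tree vertices, ray vertices to ray vertices, and respect the natural direction on double rays; hence $\phi$ carries the spine of $\SSS$ into the spine of $\SSS'$. Composing, $\psi := (\iota')^{-1}\circ(\restrict{\phi}{\iota(\Sp(k))})\circ\iota$ is a self-embedding of $\Sp(k)$, and it is surjective by Observation \ref{spinepreservecopies}, hence an automorphism.

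Next I would upgrade $\psi$ from ``self-embedding of $\Sp(k)$'' to ``similarity''. The cleanest route is to repeat the argument of the Main Lemma \ref{similarityemb} verbatim but for the embedding $\phi:\SSS\rightarrow\SSS'$ in place of a self-embedding $\phi:\ST(k)\rightarrow\ST(k)$. The only ingredient that argument uses beyond the spine structure is that each target vertex $w_i$ of global height $k$ is amalgamated to the centre of a distinguished double ray whose ``centre'' position (first type-$1$ vertex followed by a type-$0$ vertex) is not matched by the centre of $\SD_0$, and vice versa — this is exactly the indexing property arranged when the siblings $S_{k,\ell}$ and the centres $c_{k,\ell}$ were chosen, and it holds inside any sibling of $\ST(k)$ because the sibling inherits all type assignments from $\ST(k)$. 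So for a tree vertex $w$ that is the first element of a copy $(R',r')$ of $(R,r)$ on some path $P_{u,v}$ between amalgamated vertices, picking target vertices $w_0,w_1$ in the two neighbourhoods of $w$ along long unimodal label paths $\langle 012\cdots k k\cdots 10\rangle$, with $\wspin(w_0)=-1$ and $\wspin(w_1)=+1$, forces $\phi$ to send $w_i$ to the vertex $\widehat{w}_i$ with the same label sequence from $\phi(w)$, since the amalgamated double rays at $w_0,w_1$ are distinguishable only by their type patterns and those are preserved; this pins down that $\sign$ is preserved at $w$. Thus the fingerprints of $P_{u,v}$ and $P_{\phi(u),\phi(v)}$ agree for all amalgamated $v$, i.e.\ $\psi$ is a similarity in the sense of Definition \ref{similarity}(\ref{simildef}).

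Finally I would note the bookkeeping: what ``induces a similarity on $\Sp(k)$'' means for an embedding between two distinct siblings is precisely that, after identifying the spines of $\SSS$ and $\SSS'$ with $\Sp(k)$ via $\iota,\iota'$, the induced map $\psi$ on $\Sp(k)$ is a similarity; there is a choice of $\iota,\iota'$ implicit here, but since all copies of $\Sp(k)$ inside a sibling are canonical up to the automorphisms of $\Sp(k)$ (which are themselves similarities, by Lemma \ref{similaritymap}), the statement is well posed. I would close by remarking that $\restrict{\phi}{\iota(\Sp(k))}$ preserving labels and direction was already observed, so combining this with the sign-preservation just established gives that $\psi$ preserves every fingerprint symbol, completing the proof. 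The main obstacle is the middle step: verifying that the Main Lemma's spin-distinguishing argument survives the passage from self-embeddings of $\ST(k)$ to embeddings between two different siblings — concretely, one must check that in an arbitrary sibling the target vertices of global height $k$ are still amalgamated to double rays whose centres are ``incompatible'' with the centre of $\SD_0$ under embeddings, which relies on the fact that modifying finitely many type assignments (the only freedom a sibling has) cannot destroy this incompatibility since the distinguishing feature is the mere existence of a type-$1$-then-type-$0$ pattern.
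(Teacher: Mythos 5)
Your strategy---rerunning the spin-distinguishing argument of the Main Lemma \ref{similarityemb} directly for an embedding $\phi:\SSS\rightarrow\SSS'$ between two siblings---is not the paper's route, and as written it has a genuine gap at exactly the step you flag as the main obstacle. The claim that ``the sibling inherits all type assignments from $\ST(k)$'' is false: a sibling, viewed as a substructure with $\psi(\ST(k))\subseteq\SSS\subseteq\ST(k)$, may have finitely many type-$1$ ray vertices downgraded to type $0$ (a missing gadget leaf), which is precisely why non-isomorphic siblings exist. Your fallback justification, that finitely many modifications ``cannot destroy the incompatibility since the distinguishing feature is the mere existence of a type-$1$-then-type-$0$ pattern,'' is also not correct: the incompatibility actually used in Lemma \ref{similarityemb} is that the position-rigid, direction-preserving map between the two amalgamated double rays sending centre to centre fails to be type-respecting, and a well-placed finite set of downgrades can destroy this at a given target vertex. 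For instance, if in $\SSS$ the finitely many vertices of positive position at which the $c$-ray pattern has type $0$ are downgraded in the copy of $\SD_0$ attached at $w_0$, then $w_0\mapsto\widehat{w}_1$ is no longer blocked along the rays, and downgrading the centre vertex of the ray attached at $w_1$ likewise unblocks $w_1\mapsto\widehat{w}_0$; so the forced matching $\phi(w_i)=\widehat{w}_i$ is not established by your argument for an arbitrary choice of $w_0,w_1$. The repair would be to choose $w_0,w_1$ along longer unimodal paths avoiding the finitely many modified rays, but the finiteness of the sibling's deviation from $\ST(k)$ is exactly Lemma \ref{embfinite}, which is proved only after this corollary; your route therefore needs either that input or an independent argument for it.

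The paper avoids all of this with a short conjugation argument: since $\SSS$ is a sibling there is an embedding $\psi:\ST(k)\rightarrow\SSS$, and viewing $\SSS$ and $\SSS'$ as substructures of $\ST(k)$, both $\psi$ and $\phi\circ\psi$ are self-embeddings of $\ST(k)$; by Lemma \ref{similarityemb} their restrictions to $\Sp(k)$ are similarities, such similarities are surjective (Observation \ref{spinepreservecopies}) hence invertible with similarity inverses, and therefore $\restrict{\phi}{\Sp(k)}=\restrict{(\phi\circ\psi)\circ\psi^{-1}}{\Sp(k)}$ is a similarity. If you wish to keep your direct approach, the minimal fix is to replace the ``inherits all type assignments'' claim by this composition trick, or else to establish the finiteness of the type-assignment deviation before invoking the spin-distinguishing step.
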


\begin{proof}
Let $\SSS$ and $\SSS'$ be siblings of $\ST(k)$, and $\phi: \SSS \rightarrow \SSS'$ an embedding. Now since $\SSS$ is a sibling,  let $\psi: \ST(k)  \rightarrow \SSS$  be an embedding, and we may consider $\SSS'$ as a substructure of  $\ST(k)$. Hence  $ \phi \circ \psi: \ST(k)  \rightarrow  \ST(k) $ is an embedding whose restriction to $\Sp(k)$ is a similarity by Lemma \ref{similarityemb}. But $ \restrict \psi \Sp(k)$ is itself a (surjective) similarity, and hence so is $\psi^{-1}$. Thus $\restrict \phi \Sp(k) = \restrict {\phi\circ\psi\circ\psi^{-1}} \Sp(k) $ is a similarity.
\end{proof}

Thus any graph embedding of $\ST(k)$  (or of any sibling)  is a similarity on $\Sp(k)$, and conversely we will see later how to use particular similarities of  $\Sp(k)$ to create embeddings of $\ST(k)$, meaning how to correctly match the type assignments of ray vertices.  As a corollary to Lemma \ref{similarityemb} we will need the corresponding property for all siblings of $\ST(k)$. Before that, we first show that  the type assignments on double rays can only disagree with the image of an embedding of $\ST(k)$ for only finitely many ray vertices. That is, only finitely many ray vertices of type $0$ are mapped to type $1$ ray vertices, or if we recall that  type assignments are  implemented though finite trees attached to those ray vertices, we show that $\ST(k) \setminus \phi(\ST(k))$ is finite for any embedding $\phi$ of $\ST(k)$. 

There are obvious proper self-embeddings  of $\ST(k)$ (and each $\ST_s(k)$),  namely any translation along the double ray $\SD_0$,  so that all type 1 ray vertices are mapped into type 1 ray vertices. Indeed by construction, all trees attached to ray vertices on  $\SD_0$  are identical, and thus can be mapped (isomorphically) to the corresponding tree by translation, and hence $\ST(k) \setminus \phi(\ST(k))  $ is finite for such embeddings due to finitely many type 0 ray vertices mapped to type 1 ray vertices. Thus $\ST(k)$ is almost equal to its image by a translation embedding. We show that this situation occurs for all embeddings, that is $\ST(k) \setminus \phi(\ST(k))  $   is finite for all embeddings. It is worth observing that this property propagates to all siblings. 

\begin{lemma}\label{embfinite}
$\ST_s(k) \setminus \phi(\ST_s(k))$ is finite for any self-embedding $\phi$ and $s < \sss$. 
The only possible difference is in a finite number of ray vertices of different type assignments. 
\end{lemma}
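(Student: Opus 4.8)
The plan is to show that any self-embedding $\phi$ of $\ST_s(k)$ moves only finitely many ray vertices of type $0$ onto ray vertices of type $1$, which (since type assignments are coded by finite gadgets $PK(2,2)\hookrightarrow PK(2,3)$, omitting exactly one vertex) is precisely the assertion that $\ST_s(k)\setminus\phi(\ST_s(k))$ is finite. First I would invoke Lemma \ref{similarityemb} (via Corollary \ref{similarityembsibling} in the sibling version, but here we only need the case $\SSS=\SSS'=\ST_s(k)$): $\restrict{\phi}{\Sp(k)}$ is a similarity, hence by Observation \ref{spinepreservecopies} it is surjective, preserves labels, ray and tree vertices, and copies of $(R,r)$, and by Lemma \ref{embfinite}'s hypotheses all tree vertices of global height $\le k$ are amalgamated. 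In particular $\phi$ maps each double ray $\SD$ amalgamated into $\Sp(k)$ onto a double ray, respecting the natural direction (type $1$ cannot go to type $0$), so the only way $\ST_s(k)$ and its image differ is by ray vertices whose gadget is $PK(2,3)$ in $\ST_s(k)$ but whose $\phi$-image is a ray vertex carrying $PK(2,2)$; equivalently, type $1$ vertices mapped to type $0$ vertices — and since direction is preserved this cannot happen on the image side, so the discrepancy is exactly finitely many type $0$ vertices of $\ST_s(k)$ lying outside $\phi(\ST_s(k))$, provided we bound their number.

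The core of the argument is a pigeonhole over craters. Since $\ST_s(k)$ is the disjoint union of the craters $\SC(v)$ centred at target vertices $v$ of global height $\le k$, and each crater is a finite tree (all its vertices have global height $<\whth_{z_s}(v)\le k$ with respect to $v$, so it is a finite piece of finitely many copies of $(R,r)$ glued along finite segments of double rays with finitely many attached gadgets), it suffices to show that $\phi$ maps all but finitely many craters \emph{isomorphically, preserving all gadgets}, onto craters. Let $v$ be a target vertex of global height $\ell=\wcol_{z_s}(v)$. By Lemma \ref{similaritymap}\eqref{similarityspin} and \eqref{similaritycolour} applied with $u=z_s$ and $\Phi$ the similarity induced by $\phi$, we have $\wspin_{z_s}(v)=\wspin_{\phi(z_s)}(\phi(v))$ and $\wcol_{z_s}(v)=\wcol_{\phi(z_s)}(\phi(v))$ for all but finitely many target vertices $v$ (the exceptions being those $v$ lying on or originating from a decreasing-label path off the finite path $P_{z_s,\phi(z_s)}$). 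For every non-exceptional $v$, $\phi(v)$ is again a target vertex of the same global height $\ell$ and the same spin, so by construction the extended tree $S$ (or $\ST(\ell-1)$) amalgamated at $v$ and the one amalgamated at $\phi(v)$ are built from the \emph{same} recipe: the similarity, matching fingerprints, forces $\phi$ to carry $\SC(v)$ onto $\SC(\phi(v))$ by an isomorphism respecting labels, types, and spin. Hence no ray vertex inside such a crater can change type.

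It remains to bound the exceptional craters. The similarity $\Phi=\restrict{\phi}{\Sp(k)}$ moves the centre $z_s$ to $\phi(z_s)$, and $\whth_{z_s}(\phi(z_s))\le k$ because $\phi(z_s)$ is amalgamated; thus $P_{z_s,\phi(z_s)}$ is a finite path, and the set of vertices lying on decreasing-label paths emanating from it is finite (by the explicit exception clauses of Lemma \ref{colpreserv}, Corollary \ref{colembpreserv}, and Lemma \ref{globcolpreserv}: one target vertex per vertex of that finite path). Therefore only finitely many target vertices $v$ are exceptional, each with a finite crater, so only finitely many ray vertices total can lie in an exceptional crater, and among those only finitely many can have their type changed. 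Combining, $\ST_s(k)\setminus\phi(\ST_s(k))$ is contained in the union of finitely many finite craters together with the finitely many type-$0$ ray vertices in non-exceptional craters whose image is forced (none), so it is finite, and by the direction-preservation remark above the only possible difference is a finite set of ray vertices of differing type assignment. The main obstacle I anticipate is the bookkeeping in the middle step — verifying that "same global height, same spin, matching fingerprints" really does pin down $\phi$ on a crater as a gadget-preserving isomorphism rather than merely an abstract embedding — but this is exactly what the similarity machinery of Definition \ref{similarity} and Lemma \ref{similaritymap} was set up to deliver, so it should go through cleanly.
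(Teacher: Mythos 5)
There is a genuine gap, and it sits at the heart of your pigeonhole argument: craters are not finite. A crater $\SC(v)$ at a target vertex of global height $\ell$ consists of all vertices of global height $<\ell$ from $v$, and this set is infinite (already the double ray amalgamated at $v$, all the copies of $(R,r)$ activated along it, and indeed an entire copy of $\ST(\ell-1)$ or of an extended sibling $S_{i,j}$ live inside it). So your concluding step --- ``only finitely many exceptional craters, each finite, hence a finite total discrepancy'' --- fails exactly where the real work is needed: when $\wspin_{z_s}(v)\neq\wspin_{z_s}(\phi(v))$ (or when the global height of $\phi(v)$ differs from that of $v$), the trees amalgamated at $v$ and at $\phi(v)$ are \emph{different infinite siblings}, and one must still argue that the induced embedding of one into the other omits only finitely many vertices. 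The paper does this by induction on $k$: the amalgamated trees are copies of $\ST(\ell-1)$ or extended $S_{i,j}$ with $\ell\leq k$, so the induction hypothesis (applied to self-embeddings of the earlier-stage trees, of which these are siblings) bounds the difference on each exceptional crater, and Corollary \ref{embcolpres2} bounds the number of exceptional craters. Your proposal contains no induction, so this case is simply not covered.

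A second, related gap: you assert that for all but finitely many target vertices $v$, $\phi(v)$ is a target vertex of the \emph{same global height} $\ell$. Lemma \ref{similaritymap} and Corollary \ref{embcolpres2} give almost-everywhere preservation of $\wcol$ and $\wspin$, but not of $\whth_{z_s}$; the paper explicitly notes that height, unlike colour, can change at infinitely many vertices, and its proof devotes two of its three cases to $\whth_z(\phi(u))=m>\ell$ and $m<\ell$ (the craters containing $\phi(z)$ and the image of the crater containing $z$), again disposing of them only via the induction hypothesis. Your favourable case (same height, same spin, hence the same tree amalgamated and $\phi$ essentially the identity on the crater by uniqueness of the similarity) does match the paper's first case, but without the induction and without handling height-shifting images the argument does not close.
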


\begin{proof}
Since each $\ST_s(k)$ is a sibling of $\ST(k) $, it suffices to prove it for the latter. 

We proceed by induction on $k$. It is easily verified for $k=0$ since proper embeddings of $\ST(0)$ consist of (proper) translations of $\SD_0$ (in its natural direction). We then assume the statement is true for all $i<k$, and we fix a self-embedding $\phi$ of $\ST(k)$. 

By Lemma \ref{similarityemb}, $\phi$ induces a surjective similarity on $\Sp(k)$, and hence preserves preserves labels, copies of $(R,r)$, amalgamated vertices, ray vertices, the natural direction of double rays, and the sign function on amalgamated vertices. Thus any vertex in $\ST(k) \setminus \phi(\ST(k))$ must  come from ray vertices of type 0 assignment  being mapped to type 1 assignments, and these type assignments are set by the amalgamations during the construction. 

Since $\ST(k)$ is the disjoint union of craters of target vertices, we will show that only finitely many such craters may differ from their image, and that the difference is finite in all those cases where they differ. It suffices to consider craters of amalgamated vertices, since otherwise the crater of an un-amalgamated target vertex consists only of its spine elements and is mapped to a crater to an un-amalgamated vertex and thus the embedding is surjective in that case. 

Consider an amalgamated  target vertex $u$ such that $\whth_z(u)=\wcol_z(u)=\ell$, and hence $\ell\leq k$. Then $u$ was activated during the construction of $\ST(\ell)$, and was amalgamated with the centre of a tree $T$ (a copy of $\ST(\ell-1)$ or some $T_{\ell}$). 

First consider the case where $\whth_z(\phi(u))=\ell$. Then observe that $\phi(u)$ must also be a target vertex. This can be shown as follows: 
since $\phi$ induces a surjective similarity on $\Sp(k)$ and thus $\phi^{-1}(z)$ exists; thus let $w$ be such that $P_{z,u} \cap P_{\phi^{-1}(z),u}=P_{w,u}$. Then either the last consecutive pair on $P_{z,u}$ (of label $\ell$) is on $P_{w,u}$, in which case it is also on $P_{\phi(w),\phi(u)}$, or else $P_{w,u}$ consists of decreasing labels strictly less than $\ell$ and in which case $P_{z,\phi(w)}$ must contain a consecutive pair of labels $\ell$. 
Thus, if $\wspin_z(u)= \wspin_z(\phi(u))$, then the same tree $T$ is amalgamated to $u$ and $\phi(u)$ at their centre, and $\phi$ induces an embedding of that tree, sending the crater at $u$ to the crater at $\phi(u)$; by uniqueness of the similarity (by Lemma \ref{similaritymap}), $\phi$ induces an isomorphism of $T$ (essentially the identity) and $T \setminus \phi(T) = \emptyset$. If instead  $\wspin_z(u) \neq \wspin_z(\phi(u))$, then the tree $T'$ amalgamated at $\phi(u)$ is different than $T$, but  $T' \setminus \phi(T)$ is finite by the induction hypothesis (note that $\ST(\ell-1)$ or some $T_{\ell}$ are siblings), and this can occur only finitely many times by Corollary \ref{embcolpres2}.

\begin{figure}[ht]
\includegraphics[width=.6\textwidth]{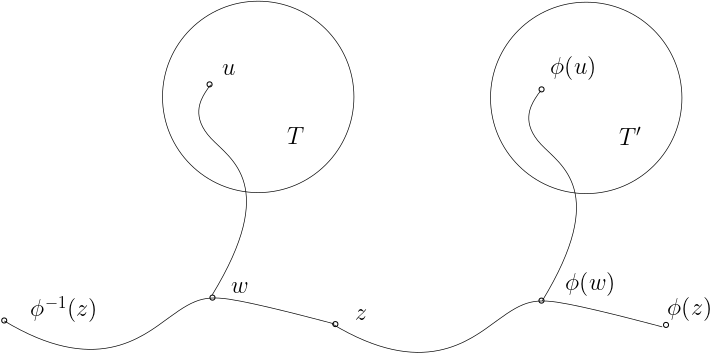}\label{fig:embfinite1}
\caption{Lemma \ref{embfinite}: The case  $\whth_z(\phi(u)) = \whth_z(u)=\wcol_z(u)=\ell$. }
\end{figure}

Now assume that $\whth_z(\phi(u))=m>\ell$. This means that the image of $T_{\ell}$ by $\phi$ was created at the later stage $m$  in the construction, 
and so the vertex $\phi(u)$ is part of a tree $T'$ (a copy of $\ST(m-1)$ or some $T_{m}$) that was amalgamated with its centre to a target vertex $v$ during the construction of $\ST(m)$, thus $\whth_z(v)=\wcol_z(v)=m$, and $\whth_{v}(\phi(u)) <m$. Hence $\whth_{\phi(z)}(v)<m$, and   the preimage $w=\phi^{-1}(v)$ satisfies $\whth_z(w)<m$. But this means that $\phi$ induces an embedding of $\ST(m)$ into $T'$, and the induction hypothesis ensures that $T' \setminus \phi( \ST(m))$ is finite. Note this is simply the case of the $m$-crater containing $\phi(z)$ where $m = \whth_z(\phi(z))$. 

\begin{figure}[ht]
\includegraphics[width=.6\textwidth]{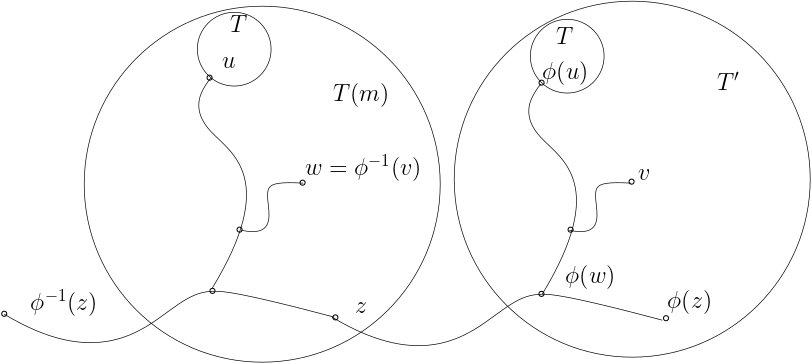}\label{fig:embfinite2}
\caption{Lemma \ref{embfinite}: The case  $\whth_z(\phi(u)) =m > \whth_z(u)=\wcol_z(u)=\ell$. }
\end{figure}

Finally assume that $\whth_z(\phi(u))=m<\ell$. Thus if $\whth_u(w)<\ell$, then $\whth_{\phi(u)}(\phi(w))<\ell$ and thus 
$\whth_z(\phi(w))<\ell$. This means that $\phi$ induces an embedding of $T$ into $\ST(\ell-1)$, and the induction hypothesis again ensures that $\ST(\ell -1) \setminus \phi(T) $ is finite. Note this is simply the case of the image of the $\ell$-crater containing $z$ (as the image of $\phi^{-1}(z)$ where $\ell = \whth_z(\phi(z))$. 

\begin{figure}[ht]
\includegraphics[width=.6\textwidth]{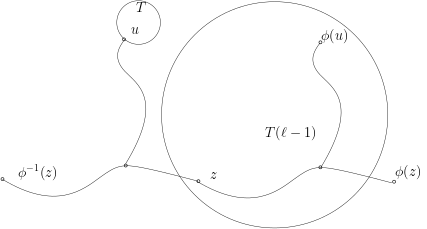}\label{fig:embfinite3}
\caption{Lemma \ref{embfinite}: The case  $\whth_z(\phi(u)) =m < \whth_z(u)=\wcol_z(u)=\ell$. }
\end{figure}

This completes the proof. 
\end{proof} 

We can now verify two requirements of the construction. First we show that each non-isomorphic sibling of each $\ST(k)$ contains a double ray
non-isomorpic to $\SD_0$, that is having a type $1$ ray vertex followed by a vertex of type $0$.

\begin{corollary}\label{nonisop0}
Every non-isomorphic sibling of each $\ST(k)$ contains a double ray non-isomorpic to $\SD_0$, that is having a type $1$ ray vertex followed by a vertex of type $0$. 
\end{corollary}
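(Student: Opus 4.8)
The plan is to argue by contradiction: suppose $\SSS$ is a sibling of $\ST(k)$ in which \emph{every} double ray is isomorphic to $\SD_0$ (i.e.\ has a unique vertex of type $0$ immediately followed by a vertex of type $1$, and no vertex of type $1$ immediately followed by one of type $0$), yet $\SSS \not\cong \ST(k)$. Since $\SSS$ is a sibling, fix embeddings $\psi:\ST(k)\to\SSS$ and $\chi:\SSS\to\ST(k)$; composing, $\phi=\chi\circ\psi$ is a self-embedding of $\ST(k)$. By the Main Lemma \ref{similarityemb} and Corollary \ref{similarityembsibling}, both $\psi$ and $\chi$ induce similarities on $\Sp(k)$, preserving labels, copies of $(R,r)$, ray and tree vertices, the natural direction of the double rays, and the sign (hence spin) function on amalgamated vertices. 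By Lemma \ref{embfinite}, $\ST(k)\setminus\phi(\ST(k))$ is finite, and the only discrepancies are finitely many ray vertices of type $0$ sent to type $1$.

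Next I would analyse the double rays of $\SSS$. Each double ray $\SD$ of $\SSS$ underlies a double ray of the spine $\Sp(k)$ (the similarity induced by $\chi$ carries it to a double ray of $\ST(k)$, and every double ray in $\ST(k)$ carries a type assignment that is a finite modification of some $\type_s$; in particular the $\SD_0$-copy carries a finite modification of $\type_0$). The hypothesis that \emph{every} double ray of $\SSS$ is isomorphic to $\SD_0$ forces each such type assignment in $\SSS$ to be, up to translation, exactly $\type_0$: a single type-$0$/type-$1$ transition and no type-$1$/type-$0$ transition. Since $\chi$ maps $\SSS$ into $\ST(k)$ as a similarity and the type-$0$ gadget embeds in the type-$1$ gadget but not conversely, $\chi$ maps each double ray of $\SSS$ onto a double ray of $\ST(k)$ with the same $\SD_0$-type pattern, and by Lemma \ref{embfinite} applied to $\psi$ the reverse comparison shows the type assignments of $\ST(k)$ on the corresponding double rays can differ from those of $\SSS$ only finitely often --- but a finite modification of $\type_0$ that still has no type-$1$/type-$0$ transition must already be (a translate of) $\type_0$ itself. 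Hence every double ray of $\ST(k)$ is isomorphic to $\SD_0$ as well.

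Now I would derive the contradiction from the construction. By construction of $\ST(k)$, at stage $k'=2^i(2j+1)\le k$ we amalgamated, at each target vertex $v$ with $\wspin_z(v)=+1$, a copy of the sibling $S=S_{i,j}$ with centre $c_{i,j}$ chosen to lie on a double ray \emph{not} isomorphic to $\SD_0$ --- precisely, a double ray possessing a type-$1$ vertex followed by a type-$0$ vertex. Such target vertices of spin $+1$ exist (indeed infinitely many: by the symmetry of $(R,r)$ and the definition of $\wspin$, both spin values occur among target vertices of every positive global height). Therefore $\ST(k)$ genuinely contains a double ray with a type-$1$/type-$0$ transition, contradicting the previous paragraph. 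It follows that no such sibling $\SSS$ exists, proving the corollary. The one point requiring care --- and the main obstacle --- is the middle step: one must be sure that the two-sided embedding data ($\psi$ one way, $\chi$ the other, together with the finiteness from Lemma \ref{embfinite}) genuinely pins down the type assignment on \emph{every} double ray of $\SSS$ and not merely that one double ray matches; this is exactly where the ``finitely many modifications of $\type_0$'' structure of siblings of the double ray $\SD_0$, combined with Observation \ref{spinepreservecopies} (the similarity is surjective on $\Sp(k)$), does the work.
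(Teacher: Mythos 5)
Your proposal runs on the same two ingredients as the paper's proof --- Lemma \ref{embfinite} together with the fact that the construction pumps infinitely many copies of $S_{0,0}$ (whose centres lie on double rays having a type-$1$ vertex followed by a type-$0$ vertex) into $\ST(1)\subseteq\ST(k)$ --- but the paper uses them directly rather than by contradiction: sandwiching a sibling as $\phi(\ST(k))\subseteq\SSS\subseteq\ST(k)$ with $\ST(k)\setminus\phi(\ST(k))$ finite, the sibling retains all but finitely many of those infinitely many ``bad'' double rays, and we are done; none of the similarity machinery or the Main Lemma is needed.

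The one step of your contrapositive that fails as written is the middle one. It is not true that $\chi$ carries a double ray of $\SSS$ onto a double ray of $\ST(k)$ ``with the same $\SD_0$-type pattern'': type-$0$ gadgets may land on type-$1$ gadgets, so the image ray can acquire extra type-$1$ vertices and be non-isomorphic to $\SD_0$ even when the source ray is isomorphic to it. For the same reason your conclusion ``every double ray of $\ST(k)$ is isomorphic to $\SD_0$'' does not follow: in the sandwich picture the finitely many rays meeting $\ST(k)\setminus\SSS$ may be bad in $\ST(k)$ yet good in $\SSS$, since deleting a single gadget leaf (turning one type-$1$ vertex into type $0$) can erase the unique type-$1$/type-$0$ transition. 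What does follow is that all but finitely many double rays of $\ST(k)$ are isomorphic to $\SD_0$; since you correctly note that infinitely many bad rays were amalgamated, your contradiction survives once the quantifier is weakened, so the argument is repairable. Finally, beware of circularity: invoking ``at stage $2^i(2j+1)\le k$ we amalgamated a copy of $S_{i,j}$ whose centre lies on a bad double ray'' presupposes the present corollary at level $i$; the clean route is to use only the stage-$1$ copies of $S_{0,0}$, whose bad double ray exists because every non-isomorphic sibling of $\ST(0)$ is given by a finite modification of $\type_0$ on $\SD_0$ (an observation which also settles the case $k=0$), and this is exactly how the paper argues.
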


\begin{proof}
The tree $\ST(1)$ already contains infinitely many such double rays (from embeddings of $S_{0,0}$), and by Lemma \ref{embfinite}, any sibling of $\ST(k)$ contains all but finbitely many of those rays. 
\end{proof} 

We can also show that each $\ST_s(k)$ has countably many siblings.

\begin{corollary}\label{ctblsiblings}
$\sib(\ST_s(k)) = \aleph_0$ for all $k$.
\end{corollary}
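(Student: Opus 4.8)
Since each $\ST_s(k)$ is a sibling of $\ST(k)=\ST_0(k)$, all of the $\ST_s(k)$ have the same sibling number, so the plan is to show $\sib(\ST(k))=\aleph_0$, which I would do by establishing the two inequalities separately.

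For the upper bound $\sib(\ST(k))\le\aleph_0$ the idea is that every sibling of $\ST(k)$ is isomorphic to one of only countably many substructures of $\ST(k)$. Let $\SSS$ be a sibling and fix embeddings $\psi:\ST(k)\to\SSS$ and $\chi:\SSS\to\ST(k)$. By Corollary \ref{similarityembsibling} the embedding $\chi$ carries the spine of $\SSS$ onto $\Sp(k)$ as a similarity, so $\chi(\SSS)$ contains a full copy of $\Sp(k)$ together with all of its label gadgets. On the other hand $\chi\circ\psi$ is a self-embedding of $\ST(k)$, so by Lemma \ref{embfinite} the set $\ST(k)\setminus(\chi\circ\psi)(\ST(k))$ is finite; since it contains $\ST(k)\setminus\chi(\SSS)$, that difference is finite as well, and by the last clause of Lemma \ref{embfinite} it consists only of the single ``extra'' vertex of a type-$1$ gadget $PK(2,3)$ at each of finitely many ray vertices. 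Hence $\SSS\cong\chi(\SSS)$ is obtained from $\ST(k)$ by downgrading the type from $1$ to $0$ at the ray vertices of some finite set $F$. As $\ST(k)$ is a countable tree there are only countably many such finite sets $F$, so there are at most $\aleph_0$ isomorphism types of siblings.

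For the lower bound $\sib(\ST(k))\ge\aleph_0$ the plan is to exhibit an explicit infinite family of pairwise non-isomorphic siblings. For $n\ge 2$ let $\SSS_n$ be the tree obtained from $\ST(k)$ by changing, on the central double ray $\SD_0$, the type of the ray vertex $v_n$ from $1$ to $0$; then the type pattern along $\SD_0$ in $\SSS_n$ is monotone except for a single isolated type-$0$ vertex lying exactly $n-1$ type-$1$ vertices past the infinite block of type-$0$ vertices. The identity map witnesses $\SSS_n\hookrightarrow\ST(k)$ (since $PK(2,2)$ embeds in $PK(2,3)$), and a translation of $\SD_0$ by any $N>n$ witnesses $\ST(k)\hookrightarrow\SSS_n$ (using, as noted before Lemma \ref{embfinite}, that all trees hanging from the ray vertices of $\SD_0$ are identical); hence each $\SSS_n$ is a sibling of $\ST(k)$. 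Finally, any isomorphism $\SSS_n\to\SSS_m$ restricts to a similarity of $\Sp(k)$ by Corollary \ref{similarityembsibling}, and a similarity preserves type assignments exactly; tracing the isolated type-$0$ vertex of $\SD_0$ through such a map forces $n=m$, so the $\SSS_n$ are pairwise non-isomorphic. Combining the two bounds gives $\sib(\ST(k))=\aleph_0$.

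The step I expect to be the genuine obstacle is the very last one: ruling out an isomorphism $\SSS_n\to\SSS_m$ that ``hides'' the isolated type-$0$ vertex by sending $\SD_0$ to some other double ray of $\Sp(k)$ whose type pattern (inherited from an earlier amalgamation stage) happens to coincide. This has to be argued from the rigidity of the essentially monotone patterns involved, together with the fact --- supplied by the similarity and Main Lemma machinery --- that embeddings between siblings preserve types exactly and can neither create nor destroy the relevant type-$0$ obstruction.
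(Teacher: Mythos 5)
Your upper bound is essentially the paper's argument in different packaging: the paper fixes the image $v=\phi(z)$ of the centre, uses Lemmas \ref{similaritymap} and \ref{similarityemb} to see that all self-embeddings with that value agree on the spine, and then uses Lemma \ref{embfinite} to conclude that only finitely many siblings are sandwiched between $\phi(\ST(k))$ and $\ST(k)$ for each of the countably many choices of $v$; you instead use Corollary \ref{similarityembsibling} and Lemma \ref{embfinite} to identify every sibling with $\ST(k)$ after downgrading the type at a finite set of ray vertices and count the finite sets. Both counts rest on the same two ingredients and both are fine (your reduction to finite sets $F$ is legitimate since each missing vertex is one leaf of a $PK(2,3)$ gadget, and deleting any one leaf of $PK(2,3)$ yields $PK(2,2)$).

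For the lower bound the gap you flag yourself is genuine, and it is exactly the point at which your writeup (and, to be fair, the paper's one-line appeal to ``translations along $\SD_s$'') stops short of a proof. Two issues: first, the sentence ``a similarity preserves type assignments exactly'' is false as stated --- similarities are defined on $\Sp(k)$, which carries no type assignments, and fingerprints record only labels, signs and ray direction; the fact you actually need is that an \emph{isomorphism} $\SSS_n\to\SSS_m$ must map type gadgets onto type gadgets of the same kind, which holds because $PK(2,2)\not\cong PK(2,3)$ and the map is onto. Second, and more seriously, type preservation alone does not let you ``trace the isolated type-$0$ vertex'': the spine is highly homogeneous (Observation \ref{symmetry}), and for $k\geq 1$ the tree $\ST(k)$ already contains many amalgamated double rays whose patterns have a type-$1$ vertex followed by a type-$0$ vertex (coming from the inserted siblings $S_{i,j}$), so nothing elementary forces an isomorphism to carry $\SD_0$ of $\SSS_n$ onto $\SD_0$ of $\SSS_m$. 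Ruling this out requires the centre-tracking induction used in Lemma \ref{basenonisomorphism} (or the crater-by-crater spin analysis of Proposition \ref{atmost2sib}): locate the image of the centre, bound its global height, and argue stage by stage which amalgamated copy it falls into. So your plan follows the paper's route, but as a standalone proof the pairwise non-isomorphism of the $\SSS_n$ still needs that additional argument; type rigidity of the gadgets by itself does not supply it.
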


\begin{proof}
By construction each $\ST_s(k)$ is countable, and  this also follows from simply being locally finite trees. Thus for any amalgamated vertex $v$, all self-embeddings $\phi$ such that $\phi(z)=v$ agree on  all vertices in its spine $\Sp(k)$ of global height at most $k$ by Lemmas \ref{similaritymap} and \ref{similarityemb}.  Now by Lemma \ref{embfinite}, $\ST(k) \setminus \phi(\ST(k))$ is finite. So there can be only finitely many siblings $\phi(\ST(k)) \subseteq S \subseteq \ST(k)$. Hence $\sib(\ST(k))=\sib(\ST_s(k))\leq \aleph_0$ for all $k$.

We already noticed that each $\ST(k)$ has infinitely many siblings due to translations along $\SD_s$, hence $\sib(\ST_s(k)) = \aleph_0$  exactly.
\end{proof} 
  
We show that all $\ST_s(k)$ are pairwise non-isomorphic siblings at every stage, and in fact we prove a bit more so to support the induction argument.  
    
\begin{lemma}\label{basenonisomorphism}
For each $s \neq s' < \sss$ and $k \in \N$,  $\ST_s(k) \not\cong \ST_{s'}(k)$. \\
Moreover, if $k+1=2^i(2j+1)$, and $S_{i,j}$ as a substructure of $\ST_s(i)$ and $\ST_s(k)$ was expanded to $S_{k+1}$ following the inductive construction so that all tree vertices in $T_{k+1}$ are amalgamated up to global height $k$ with respect to its centre $c$, then 
 $S_{k+1} \not\cong \ST_{s'}(k)$. 
\end{lemma}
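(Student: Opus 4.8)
The statement has two parts: (i) the trees $\ST_s(k)$ are pairwise non-isomorphic for $s\neq s'<\sss$, and (ii) the ``partially built'' structure $S_{k+1}$ (the expansion of $S_{i,j}$ inside $\ST_s(k)$, amalgamated up to global height $k$ with respect to its centre $c$) is not isomorphic to $\ST_{s'}(k)$. The strategy in both cases is the same: locate a canonical vertex — the \emph{centre} — in any purported isomorphism, and then read off a local invariant there that distinguishes the structures. Concretely, I would first establish that any isomorphism between two of these trees must carry centre to centre. For this I invoke the accumulated machinery: by Observation \ref{spinepreservecopies} and the Main Lemma \ref{similarityemb}, any embedding (in particular any isomorphism) preserves labels, copies of $(R,r)$, ray vertices, tree vertices, the natural direction on double rays, and the sign/spin functions on amalgamated vertices. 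In particular it preserves the finite-tree type gadgets, so it respects the $0/1$ type assignments on ray vertices (a type $0$ vertex can only go to type $0$ under an \emph{isomorphism}, since an isomorphism is surjective and the type $1$ gadget does not embed in the type $0$ gadget).

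**Step 1: centres go to centres.** The centre $z_s$ of $\ST_s(k)$ lies on the distinguished double ray $\SD_s$ and is characterised, within that double ray, as the first vertex of type $1$ preceded by a vertex of type $0$ in the negative direction — more precisely, $\SD_s$ is the unique double ray (up to the residual ambiguity handled by Corollary \ref{nonisop0}) realising the type pattern $\type_s$, and $z_s$ is pinned down on it. I would argue that an isomorphism $\ST_s(k)\to\ST_{s'}(k)$ must map $\SD_s$, as the double ray through the centre carrying a \emph{finite} modification of $\type_0$ having its type-changes at specified positions, to the corresponding double ray through $z_{s'}$; because an isomorphism is surjective and preserves the type gadgets exactly, the type pattern along this image double ray must be exactly $\type_{s'}$, and the $\type_s$ were chosen (the explicit indexing given before Figure \ref{doublerays}) precisely so that \emph{no type-preserving bijection} of $\Z$ carries $\type_s$ to $\type_{s'}$ while sending $z_s$ (the position $0$) to $z_{s'}$ — indeed no type-preserving \emph{embedding} does, which is the design feature repeatedly exploited. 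This already yields (i): an isomorphism $\ST_s(k)\cong\ST_{s'}(k)$ would restrict to such a forbidden correspondence on the double rays, a contradiction.

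**Step 2: the partial structure $S_{k+1}$.** For (ii), note $S_{k+1}$ is obtained from $S_{i,j}$ — a sibling of $\ST(i)$ with centre $c=c_{i,j}$ lying on a double ray $\SD$ chosen (Corollary \ref{nonisop0}) to be non-isomorphic to $\SD_0$, with $c$ the first type-$1$-followed-by-type-$0$ vertex — by running the inductive amalgamation up to global height $k$. The key point is that $S_{k+1}$ still carries this centre $c$ on a double ray whose type pattern is a finite modification of $\type_0$ \emph{of the $S_{i,j}$ kind}, in particular possessing a type $1$ vertex immediately followed by a type $0$ vertex, whereas $\SD_0$ (and every double ray of $\ST_{s'}(k)$ that could be the image — all siblings of $\ST(i)$, hence all with the ``centre'' pattern) is indexed so that no type-preserving embedding sends $c$ to $z_{s'}$ or vice versa. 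So any isomorphism $S_{k+1}\to\ST_{s'}(k)$ would again have to realise a forbidden type-preserving correspondence between $\SD$ and the centre double ray of $\ST_{s'}(k)$ sending centre to centre — impossible. One must also check the isomorphism is forced to send $c$ to the centre of $\ST_{s'}(k)$: this follows because, as in Step 1, the double ray through $c$ is distinguished (it carries a finite modification of the base type pattern with the characteristic ``wrong-direction'' type change that $\SD_0$ lacks, cf. Corollary \ref{nonisop0}), and by the Main Lemma the sign/spin data forces the match of centres.

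**Main obstacle.** The delicate point is not the type-counting — that is pure bookkeeping with the explicit $\type_s$ — but justifying that an isomorphism is \emph{forced} to identify the centres in the first place, i.e.\ that it cannot map the distinguished double ray $\SD_s$ (or $\SD$) to some \emph{other} double ray of the target and smuggle the centre elsewhere. This is exactly where the sign/spin apparatus and the Main Lemma \ref{similarityemb} do the real work: the centre is the unique amalgamated vertex from which the global sign/spin functions are computed, and since an isomorphism induces a similarity on $\Sp(k)$ (Corollary \ref{similarityembsibling}) which preserves $\wsign$ and $\wspin$ on amalgamated vertices, together with the fact that the double rays hung at $\wspin=+1$ versus $\wspin=-1$ target vertices are non-isomorphic-as-pointed-rays by construction, the only consistent choice of image for the centre is the centre of the target. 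I expect to spend most of the argument making this rigidity explicit, and then (i) and (ii) both drop out from the combinatorics of the chosen type patterns.
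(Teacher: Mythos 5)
There is a genuine gap, and it sits exactly where you placed your ``main obstacle'': the claim that an isomorphism must send centre to centre is never actually established, and the machinery you cite cannot establish it. The Main Lemma \ref{similarityemb} and Corollary \ref{similarityembsibling} only say that an embedding induces a similarity, i.e.\ preserves labels, ray direction, and the sign/spin data \emph{relative to amalgamated base vertices}; the functions $\wspin_v$, $\wsign_v$ are defined for every tree vertex $v$, so the centre is not ``the unique vertex from which the spin is computed'' in any structural sense. Worse, for $k\geq 1$ the tree $\ST_s(k)$ contains many centre-like vertices: every target vertex of spin $-1$ is amalgamated with a copy of $(\ST(\ell-1),z_0)$, whose own centre sits at the position-$0$ vertex of a copy of $\SD_0$, and every target vertex of spin $+1$ carries the centre of some extended sibling $S_\ell$, sitting at a centre position of a double ray with the type-$1$-followed-by-type-$0$ descent. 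Nothing prevents such a ray from being isomorphic to $\SD_{s'}$ (the choice of the $c_{i,j}$ only guarantees non-embeddability into $\SD_0$ with matching centres, not non-isomorphism with the $\SD_{s'}$ for $s'>0$), so the double ray through $z_s$ is not characterised by its type pattern and a purported isomorphism could a priori send $z_{s'}$ deep into one of these amalgamated copies. This also breaks your Step 2 even if centre-to-centre were granted: for $s'>0$ the central ray of $S_{i,j}$ and $\SD_{s'}$ can match perfectly, so the ``forbidden type-preserving correspondence'' need not be forbidden; the real obstruction is that $S_{i,j}$ was chosen \emph{globally} non-isomorphic to $\ST_{s'}(i)$.

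The paper's proof avoids the centre-to-centre claim entirely and is an induction on $k$ in which the second assertion about $S_{k+1}$ is precisely what keeps the induction alive. Given an isomorphism $\phi:\ST_{s'}(k)\rightarrow\ST_s(k)$, one sets $v=\phi(z_{s'})$ and $\ell=\whth_{z_s}(v)\leq k$: if $\ell=0$ then $v$ lies on $\SD_s$ and the non-isomorphic ray patterns give the contradiction; if $\ell>0$ then $v$ lies inside an amalgamated copy of $\ST(\ell-1)$ or of an extended sibling $S_\ell$, and by Lemma \ref{embedkcopies} (height/amalgamation preservation) the restriction $\restrict \phi {\ST_{s'}(\ell-1)}$ is an isomorphism onto that copy, contradicting the induction hypothesis (here the ``moreover'' clause, and the choice $S_{i,j}\not\cong\ST_s(i)$, are used). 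The same case analysis on $\ell=\whth_c(\phi(z_{s'}))$ handles $\ST_{s'}(k)\not\cong S_{k+1}$. To repair your proposal you would need either to prove the centre-rigidity statement outright (which the construction deliberately does not provide) or to restructure the argument as this induction, tracking the image of the centre rather than constraining it.
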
   

\begin{proof} 
 We proceed by induction on $k$.  We have already noted that the trees $\ST_s(0)$ are pairwise non-isomorphic (since the only paths involved are not isomorphic). Moreover since $1=2^0(2\cdot 0+1)$,  $S_1=S_{0,0}$ and by definition is not isomorphic to any $\ST_s(0)$.
 
Let $k>0$ be the smallest counterexample, and suppose first that $\phi: \ST_{s'}(k) \rightarrow \ST_s(k)$ is an isomorphism with $s' \neq s$. Let $v=\phi(z_{s'})$, then $v$ must be amalgamated (since $z_{s'}$ is).  Let $\ell =\whth_{z_s}(v) \leq k $. If $\ell=0$, then this means that $\phi(z_{s'}) \in \SD_s$,  but this is impossible by construction since the double rays $\SD_{s'}$ and $\SD_s$ are not isomorphic. 

Thus $\ell>0$, and the inductive definiton applies. Write $\ell= 2^i(2j+1)$. According to the cases of the constructions, $v$ is either contained in a copy of  $\ST(\ell-1)$ (=$\ST_0(\ell-1)$ by construction), or of an extended  sibling $S_{\ell}$ of $S_{i,j}$  that was inserted in the tree by amalgamation identifying its centre to a target vertex . But then either $\restrict \phi  \ST_{s'}(\ell-1): \ST_{s'}(\ell-1) \rightarrow \ST(\ell-1)$ or  $\restrict \phi  \ST_{s'}(\ell-1): \ST_{s'}(\ell-1) \rightarrow S_{\ell}$  is an isomorphism, a contradiction in either case to the induction hypothesis. 
  
\begin{figure}[ht]
\includegraphics[width=.6\textwidth]{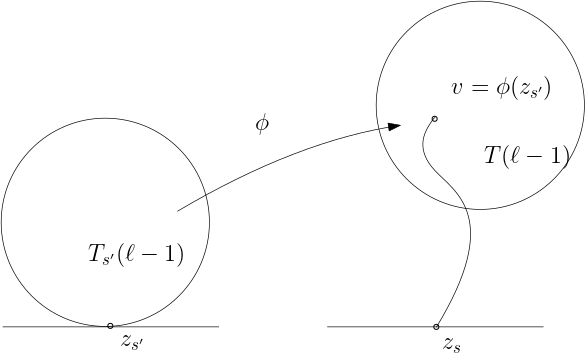}\label{fig:basenonisomorphism1}
\caption{Lemma \ref{basenonisomorphism}: The case that $\phi: \ST_{s'}(k) \rightarrow \ST_s(k)$ is an isomorphism.}
\end{figure}
 
Otherwise write $k+1=2^i(2j+1)$, and consider $S_{k+1}$ expanded from $S_{i,j}$ so that all tree vertices in $S_{k+1}$ are amalgamated up to global height $k$ with respect to its center $c$. Assuming that $\phi: \ST_{s'}(k) \rightarrow S_{k+1}$ is an isomorphism, let $v=\phi(z_{s'})$, and $\ell=\whth_c(v)$.   If $\ell \leq i$, then $v$ was amalgamated in $S_{i,j}$ and  $\restrict \phi  \ST_{s'}(i): \ST_{s'}(i) \rightarrow S_{i,j}$  is an isomorphism by Lemma \ref{embedkcopies}, a contradiction as $S_{i,j}$ was specifically chosen non-isomorphic to $\ST_{s'}(i)$ (and any $\ST_s(i)$). If $\ell > i$, then $v$ was amalgamated following the inductive construction by assumption, and hence is part of an amalgamation of some $\ST_s(m)$ or a $S_{m+1}$ for some $i \leq m<\ell$. But then either $\restrict \phi  \ST_{s'}(m): \ST_{s'}(m) \rightarrow \ST_s(m)$  or $\restrict \phi  \ST_{s'}(m): \ST_{s'}(m) \rightarrow S_{m+1}$ is an isomorphism by Lemma \ref{embedkcopies}, again a contradiction in either case. 
 
\begin{figure}[ht]
\includegraphics[width=.6\textwidth]{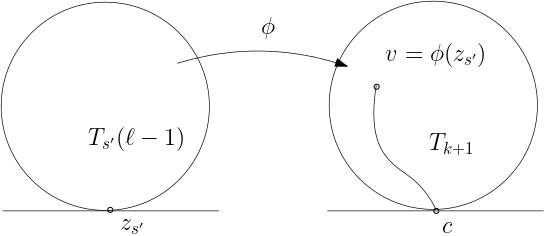}\label{fig:basenonisomorphism2}
\caption{Lemma \ref{basenonisomorphism}: The case that $\phi: \ST_{s'}(k) \rightarrow  S_{k+1}$  is an isomorphism.}
\end{figure}
\end{proof} 
 
\subsection{The trees $\langle \ST_s: s < \sss \rangle$}

We are now ready to define  $\langle \ST_s: s < \sss \rangle$.

\begin{definition}
Define $\ST_s= \bigcup_k\ST_s(k)$ for each $s < \sss$.
\end{definition}

Recall the spine $\Sp$ defined in Definition \ref{spine}. Since $\Sp(k)$ is the spine of each $\ST_s(k)$, then clearly $\Sp$ (up to isomorphism) is the spine of each $\ST_s$. Moreover we have the following result arising from known properties of $\ST(k)$.

\begin{lemma}\label{embedT}
Let $\phi$ be a self-embedding of $\ST$ into $\ST$, then:
\begin{enumerate}
\item  $\phi$ induces a similarity on $\Sp$.
\item If $\whth_z(\phi(z))\leq k$, $\restrict \phi \ST(k)$ is a self-embedding of $\ST(k)$.
\item $\ST \setminus \phi(\ST)$ is finite. The only possible difference is in a finite number of ray vertices of different type assignments. 
\end{enumerate}
\end{lemma}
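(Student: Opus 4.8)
The plan is to derive all three parts from the corresponding facts at the finite stages $\ST(k)$ — Lemma~\ref{embedkcopies}, the Main Lemma~\ref{similarityemb}, and Lemma~\ref{embfinite} — using that $\ST=\bigcup_k\ST(k)$ is an increasing union of (induced) subtrees. Two remarks do most of the work. First, a graph self-embedding $\phi$ of $\ST$ preserves labels (through the gadgets) and degrees, hence preserves amalgamation status and maps $P_{z,w}$ label-preservingly onto $P_{\phi(z),\phi(w)}$; in particular $\whth_{\phi(z)}(\phi(w))=\whth_z(w)$ for every $w$, and from $P_{z,\phi(w)}\subseteq P_{z,\phi(z)}\cup P_{\phi(z),\phi(w)}$ one gets the basic inequality $\whth_z(\phi(w))\le\max\{\whth_z(\phi(z)),\whth_{\phi(z)}(\phi(w))\}$. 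Second, write $m=\whth_z(\phi(z))$, which is finite because $\phi(z)$ is a single amalgamated vertex at finite distance from $z$.

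I would prove (2) first, by the argument of Lemma~\ref{embedkcopies}(2): if $\whth_z(\phi(z))\le k$, then the basic inequality gives $\whth_z(\phi(w))\le k$ whenever $\whth_z(w)\le k$, and since $\ST(k)$ is recovered inside $\ST$ as the subtree in which exactly the tree vertices of global height $\le k$ (with respect to $z$) are amalgamated, this yields $\phi(\ST(k))\subseteq\ST(k)$, i.e.\ $\restrict{\phi}{\ST(k)}$ is a self-embedding of $\ST(k)$. Part (1) then follows: given amalgamated $u,v\in\wR_0$ of $\Sp$, choose $k\ge\max\{m,\whth_z(u),\whth_z(v)\}$, so $u,v$ — and by the basic inequality $\phi(u),\phi(v)$ — are amalgamated vertices of $\Sp(k)$; since $\restrict{\phi}{\ST(k)}$ is a self-embedding of $\ST(k)$ by (2), the Main Lemma~\ref{similarityemb} makes $\restrict{\phi}{\Sp(k)}$ a similarity, so the fingerprints of $P_{u,v}$ and $P_{\phi(u),\phi(v)}$ agree. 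As $u,v$ were arbitrary, $\restrict{\phi}{\Sp}$ is a similarity, and in particular $\phi$ preserves the sign function on amalgamated vertices.

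For (3), by (2) the restriction $\restrict{\phi}{\ST(k)}$ is a self-embedding of $\ST(k)$ for every $k\ge m$, so by Lemma~\ref{embfinite} the set $\ST(k)\setminus\phi(\ST(k))$ is finite and consists only of ray vertices (with their type gadgets) whose type was raised from $0$ to $1$. Since $\ST=\bigcup_{k\ge m}\ST(k)$ and $\phi(\ST(k))\subseteq\phi(\ST)$, every vertex of $\ST\setminus\phi(\ST)$ lies in some $\ST(k)\setminus\phi(\ST(k))$; the remaining point is to bound these finite sets uniformly in $k$. This I would extract from the proof of Lemma~\ref{embfinite}: there the defect is localized to the single crater of global height $m$ containing $\phi(z)$ — where it is finite by that lemma's induction — together with the craters $\SC(u)$ of the amalgamated target vertices $u$ with $\wspin_z(u)\neq\wspin_z(\phi(u))$, of which, by Corollary~\ref{embcolpres2} together with Lemma~\ref{globalspinpreserving}, there are only finitely many, independently of $k$; and within each of these finitely many craters $\phi$ restricts to a self-embedding of a copy of some $\ST(\ell-1)$ or $T_\ell$ whose defect is again finite by Lemma~\ref{embfinite}. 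Hence $\ST\setminus\phi(\ST)$ is finite, the difference being finitely many ray vertices of changed type.

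The step I expect to be the main obstacle is precisely this uniform bound in (3): Lemma~\ref{embfinite} only guarantees that each $\ST(k)\setminus\phi(\ST(k))$ is finite, and one must rule out accumulation as $k\to\infty$ by pinning the defect of $\phi$ on all of $\ST$ to a single finite region determined by $\phi$ alone — essentially re-running the case analysis of Lemma~\ref{embfinite} globally, with Corollary~\ref{embcolpres2} and Lemma~\ref{globalspinpreserving} supplying the fact that colour, and spin, can disagree with their $\phi$-images at only finitely many amalgamated target vertices. By contrast, parts (1) and (2) are routine passages to the limit once the finite-stage statements are available.
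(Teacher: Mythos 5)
Your derivation is correct and follows essentially the paper's own (implicit) route: the paper states this lemma without proof, as ``arising from known properties of $\ST(k)$'', namely Lemma \ref{embedkcopies}, the Main Lemma \ref{similarityemb} and Lemma \ref{embfinite}, which is exactly how you pass to the limit for (1), (2) and (3). Your handling of (3) --- re-running the case analysis of Lemma \ref{embfinite} globally and observing that its exceptional regions (the crater containing $\phi(z)$, the image of the crater containing $z$, and the finitely many target vertices where spin or colour can change, all controlled by $P_{z,\phi(z)}$ via Lemma \ref{globalspinpreserving} and Corollary \ref{embcolpres2}) are independent of $k$ --- correctly isolates the only non-routine point and is, if anything, more explicit than the paper itself.
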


Note that in the last case a type 0 ray vertex being mapped to a type 1 means that a single leaf is not in the image, this will be useful later. 

We first confirm that we have at least $\sss$ siblings.

\begin{proposition}\label{atleast2sib}
$\ST_s \not\cong \ST_{s'} $ for any $s \neq s' < \sss$.
\end{proposition}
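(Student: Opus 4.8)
The plan is to derive $\ST_s \not\cong \ST_{s'}$ from the stage-by-stage non-isomorphism already established in Lemma~\ref{basenonisomorphism}, using the fact (Lemma~\ref{embedT}) that an isomorphism between the two infinite trees must, when restricted suitably, descend to an isomorphism between finite-stage approximations $\ST_s(k)$ and $\ST_{s'}(k)$. Concretely, suppose toward a contradiction that $\phi:\ST_{s'}\to\ST_s$ is an isomorphism for some $s\neq s'<\sss$. Since $\phi$ is in particular a self-embedding of the tree $\ST$ in disguise (both are siblings of $\ST$, or we argue directly), $\phi$ preserves all the graph-coded data: labels, copies of $(R,r)$, ray and tree vertices, amalgamated vertices, the natural direction of double rays, and — by the Main Lemma \ref{similarityemb} — the sign and spin functions on amalgamated vertices; moreover $\phi$ induces a similarity on the spine $\Sp$.

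First I would locate the image of the centre. Set $v=\phi(z_{s'})$. Since $z_{s'}$ is an amalgamated vertex (it is the centre of $\SD_{s'}$), so is $v$, hence $\whth_{z_s}(v)=\ell$ for some finite $\ell$. If $\ell=0$ then $v\in\SD_s$, and $\phi$ restricts to an isomorphism of the double ray $\SD_{s'}$ onto $\SD_s$ sending centre to centre, contradicting the fact that the type assignments $\type_{s'},\type_s$ were chosen so that no such isomorphism exists. If $\ell>0$, write $\ell=2^i(2j+1)$; by the inductive construction $v$ lies in a copy of $\ST(\ell-1)$ or of an extended sibling $S_\ell$ of $S_{i,j}$, amalgamated at a target vertex at its centre. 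Then, exactly as in the proof of Lemma~\ref{basenonisomorphism}, $\restrict{\phi}{\ST_{s'}(\ell-1)}$ is an isomorphism of $\ST_{s'}(\ell-1)$ onto $\ST(\ell-1)$ or onto $S_\ell$, contradicting Lemma~\ref{basenonisomorphism} in either case.

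The main obstacle I anticipate is justifying that $\phi$ really does carry $\ST_{s'}(\ell-1)$ \emph{onto} (not merely into) the relevant finite-stage piece, i.e. that the restriction is surjective and lands exactly in the copy of $\ST(\ell-1)$ or $S_\ell$ amalgamated at $v$. This needs: (i) that $\phi(z_{s'})=v$ is a target vertex (argued via the similarity on $\Sp$ and the preimage $\phi^{-1}(z_s)$, showing the last consecutive pair of label $\ell$ on the relevant path is preserved, just as in Lemmas~\ref{embfinite} and~\ref{basenonisomorphism}); (ii) that since $\phi$ is an isomorphism (not just an embedding) the crater at $v$ and its amalgamated tree are the full image of the crater at $z_{s'}$ and its amalgamated tree; and (iii) matching spins via the Main Lemma so that the tree amalgamated at $v$ is isomorphic to $\ST_{s'}(\ell-1)$ only if it is the ``correct'' one. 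All three points are already essentially contained in the cited lemmas, so the proof should amount to assembling them; I expect the write-up to be short.

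\begin{proof}
Suppose for contradiction that $\phi:\ST_{s'}\to\ST_s$ is an isomorphism for some $s\neq s'<\sss$. By Lemma~\ref{embedT} (applied to $\phi$, viewing both trees as $\ST$ up to the spine, or directly to the induced map), $\phi$ induces a similarity on $\Sp$; in particular it preserves labels, copies of $(R,r)$, ray and tree vertices, amalgamated vertices, the natural direction of double rays, and the sign and spin functions on amalgamated vertices by Lemma~\ref{similarityemb}.

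Let $v=\phi(z_{s'})$. Since $z_{s'}$ is amalgamated, so is $v$, and $\ell:=\whth_{z_s}(v)<\infty$. If $\ell=0$ then $v$ lies on the double ray $\SD_s$, and since $\phi$ preserves ray vertices, copies of $(R,r)$, and type assignments, $\restrict{\phi}{\SD_{s'}}$ is an isomorphism of $\SD_{s'}$ onto $\SD_s$ carrying the centre $z_{s'}$ to the centre $z_s$; this contradicts the choice of the type assignments $\type_s$ and $\type_{s'}$, which was made precisely so that no embedding of one such double ray into the other carries centres to centres.

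Hence $\ell>0$; write $\ell=2^i(2j+1)$. Arguing as in the proof of Lemma~\ref{embfinite}, $v$ is a target vertex of global height $\ell$ with respect to $z_s$: letting $w$ be the vertex with $P_{z_s,v}\cap P_{\phi^{-1}(z_s),v}=P_{w,v}$, either the last consecutive pair on $P_{z_s,v}$ of label $\ell$ lies on $P_{w,v}$ and is preserved, or $P_{w,v}$ has strictly decreasing labels $<\ell$ and $P_{z_s,\phi(w)}$ acquires a consecutive pair of label $\ell$; either way $\whth_{z_s}(v)=\wcol_{z_s}(v)=\ell$. By the inductive construction, $v$ was amalgamated during the construction of $\ST_s(\ell)$, being identified with the centre of a copy of $\ST(\ell-1)$ (if $\wspin_{z_s}(v)=-1$) or of an extended sibling $S_\ell$ of $S_{i,j}$ (if $\wspin_{z_s}(v)=+1$). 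Since $\phi$ is an isomorphism and sends $z_{s'}$ to $v$, it carries the sub-tree of $\ST_{s'}$ consisting of vertices of global height $\leq\ell-1$ with respect to $z_{s'}$ onto the sub-tree of vertices of global height $\leq\ell-1$ with respect to $v$; that is, $\restrict{\phi}{\ST_{s'}(\ell-1)}$ is an isomorphism of $\ST_{s'}(\ell-1)$ onto $\ST(\ell-1)$ or onto $S_\ell$. This contradicts Lemma~\ref{basenonisomorphism}. Therefore no such $\phi$ exists, and $\ST_s\not\cong\ST_{s'}$ for all $s\neq s'<\sss$.
\end{proof}
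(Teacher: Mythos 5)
Your overall strategy is the paper's: assume an isomorphism $\phi:\ST_{s'}\to\ST_s$, pass to a finite stage, and contradict Lemma~\ref{basenonisomorphism}. The paper, however, does this in a single step: it sets $k=\whth_{z_s}(\phi(z_{s'}))$ and observes (as in Lemmas~\ref{embedkcopies} and~\ref{embedT}) that $\restrict{\phi}{\ST_{s'}(k)}$ is then an isomorphism of $\ST_{s'}(k)$ onto $\ST_s(k)$, which is exactly what Lemma~\ref{basenonisomorphism} forbids. You instead re-run the internal case analysis of the \emph{proof} of Lemma~\ref{basenonisomorphism} (the cases $\ell=0$ and $\ell>0$, locating the copy of $\ST(\ell-1)$ or $S_\ell$), which is redundant and introduces two avoidable soft spots.

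The more serious one is your claim that $v=\phi(z_{s'})$ is a target vertex. The justification you transcribe from Lemma~\ref{embfinite} is circular: it presupposes that the last consecutive pair on $P_{z_s,v}$ has label $\ell$, i.e.\ that $\wcol_{z_s}(v)=\whth_{z_s}(v)$, which is precisely what is to be shown. Moreover, in Lemma~\ref{embfinite} that argument is only run when the source and image have the \emph{same} global height; in the case where the image height strictly exceeds the source height --- which is your situation, since $z_{s'}$ has height $0$ and $v$ has height $\ell>0$ --- the conclusion drawn there is that the image sits somewhere inside a crater, not that it is a target vertex. Fortunately the claim is unnecessary: any amalgamated vertex $v$ with $\whth_{z_s}(v)=\ell>0$ lies in the crater $\SC(u)$ of a target vertex $u$ of height $\ell$, and then $\{w:\whth_v(w)\le\ell-1\}=\SC(u)$, so the restriction of $\phi$ still maps $\ST_{s'}(\ell-1)$ onto the copy of $\ST(\ell-1)$ or $S_\ell$ carried by that crater; this is exactly how the paper phrases it in Lemma~\ref{basenonisomorphism} (``$v$ is contained in a copy''), without asserting $v$ is its centre. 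A minor second point: in the branch where the copy is $\ST(\ell-1)=\ST_0(\ell-1)$, the contradiction with Lemma~\ref{basenonisomorphism} requires $s'\neq 0$, so you should add the harmless reduction (replace $\phi$ by $\phi^{-1}$ if necessary) --- or simply use the paper's direct restriction to an isomorphism $\ST_{s'}(k)\to\ST_s(k)$, which needs none of this care.
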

  
\begin{proof}  
 Suppose that $\phi:\ST_{s'} \rightarrow \ST_s$ is an isomorphism, and let $k=\whth_{z_s}(\phi(z_{s'})$. Then $\restrict \phi \ST_{s'}(k): \ST_{s'}(k) \rightarrow \ST_s(k)$ is an isomoprhism, contradicting Lemma \ref{basenonisomorphism}.
 \end{proof} 

And finally we are ready to conclude that $\ST$ as  exactly $\sss$ siblings. 
    
\begin{proposition}\label{atmost2sib}
If $\SSS \sim \ST$, then $\SSS \cong \ST_s$ for some  $s<\sss$.
\end{proposition}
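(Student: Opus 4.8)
The plan is to take an arbitrary sibling $\SSS \sim \ST$ and show it must coincide, up to isomorphism, with one of the finitely many trees $\ST_s$. First I would fix embeddings $\psi: \ST \to \SSS$ and $\chi: \SSS \to \ST$, and work with $\SSS$ realized as a substructure of $\ST$ via $\chi$; then $\chi\circ\psi$ is a self-embedding of $\ST$. By Lemma \ref{embedT}, $\chi\circ\psi$ induces a similarity on $\Sp$ and $\ST\setminus (\chi\circ\psi)(\ST)$ is finite, differing only in finitely many ray vertices carrying different type assignments. Since $\psi(\ST) \subseteq \SSS \subseteq \ST$, the same holds for the two nested inclusions: $\SSS \setminus \psi(\ST)$ and $\ST \setminus \SSS$ are both finite and differ from the ambient tree only on finitely many ray vertices of differing type. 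In particular, $\SSS$ has the same spine as $\ST$ (namely $\Sp$, up to isomorphism), and $\SSS$ is obtained from $\Sp$ by a type assignment on each amalgamated double ray that differs from the one in $\ST$ in only finitely many places.

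Next I would locate a stage $k$ capturing all the differences. Because $\ST\setminus\SSS$ and $\SSS\setminus\psi(\ST)$ are finite, there is a finite bound on the global height (with respect to $z=z_0$, or the relevant centre) of every ray vertex whose type assignment in $\SSS$ differs from that prescribed by $\ST$; choose $k$ larger than all such heights and also large enough that the centre $\phi^{-1}(z)$-type comparisons stabilize. Then $\SSS$ and $\ST$ agree outside $\ST(k)$, so $\SSS$ is determined by a sibling $\SSS\cap\ST(k)$ of $\ST(k)$ together with the common tail, i.e. $\SSS \cong \SSS'$ for some sibling $\SSS'$ of $\ST(k)$ extended by the standard construction. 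By Corollary \ref{ctblsiblings} there are only countably many siblings of $\ST(k)$ up to isomorphism; more importantly, by the construction the siblings of $\ST(k)$ not isomorphic to any $\ST_s(k)$ were enumerated as $\{S_{k,\ell}:\ell\in\N\}$, and each $S_{k,\ell}$ was explicitly amalgamated into every $\ST_s(m)$ for the appropriate later stage $m = 2^k(2\ell+1)$ with its centre identified to a target vertex. I would then argue that this built-in amalgamation forces any sibling $\SSS$ whose "$\ST(k)$-part" is $S_{k,\ell}$ to embed into, and be embedded by, the corresponding $\ST_s$; combined with Lemma \ref{embedfinite}-style finiteness this pins $\SSS$ down.

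More precisely, the concluding step: given $\SSS\sim\ST$, use Corollary \ref{similarityembsibling} so that $\chi\circ\psi$ is a similarity on $\Sp$, hence (via Lemma \ref{similaritymap}) preserves labels, copies of $(R,r)$, ray/tree vertices, the natural direction, and the sign and spin functions on amalgamated vertices. Therefore the type assignments of $\SSS$, read off along each amalgamated double ray, are finite modifications of the ones used in the construction of $\ST$. Look at the double ray $\SD$ containing the image of $z$ under the comparison: its type assignment in $\SSS$ is a finite modification of $\type_0$, hence isomorphic to exactly one of $\type_0,\type_1,\dots,\type_{\sss-1}$ (these being the ones the construction reserved) or to some $\type_s$ realized by a sibling; the spin-preservation from the Main Lemma \ref{similarityemb} guarantees that every \emph{other} amalgamated double ray in $\SSS$ carries precisely the type assignment that the construction of $\ST_s$ would have placed there, for that same $s$. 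Running this identification crater by crater (recall $\ST_s(k)$ is the disjoint union of craters of target vertices) and passing to the union over $k$, one obtains an isomorphism $\SSS \cong \ST_s$.

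The main obstacle I expect is the bookkeeping at the centre: showing that the single choice of which $\type_s$ appears on the "central" double ray of $\SSS$ propagates consistently to force \emph{all} craters of $\SSS$ to match those of the \emph{same} $\ST_s$. This is exactly where the asymmetry built into the indexing of the $\SD_s$ and the $S_{k,\ell}$ (no embedding sending centre to centre across distinct indices) together with the sign/spin preservation of Lemma \ref{similarityemb} must be leveraged: a mismatch in any one crater would either contradict spin-preservation or produce an embedding of one distinguished double ray into another respecting centres, which was engineered to be impossible. Handling the finitely many craters where $\SSS$ genuinely differs from $\ST$ (the type-$0$-to-type-$1$ leaf discrepancies) requires checking that these discrepancies are exactly accounted for by the choice of $s$, using Lemma \ref{embedfinite} and Corollary \ref{embcolpres2} to bound where they can occur.
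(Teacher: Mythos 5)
Your overall skeleton is the paper's: localize all differences between $\SSS$ and $\ST$ inside some finite stage $\ST(n)$, observe that $\SSS(n)=\SSS\cap\ST(n)$ is squeezed between $\theta(\ST(n))$ and $\ST(n)$ and is therefore a sibling of $\ST(n)$, hence isomorphic either to some $\ST_s(n)$ or to one of the enumerated extra siblings $S_{n,j}$, and then extend an isomorphism crater by crater using spin preservation (Main Lemma \ref{similarityemb}) and uniqueness of similarities (Lemma \ref{similaritymap}). The case $\SSS(n)\cong\ST_s(n)$ that you describe is essentially the paper's second case and is fine in outline.

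However, there is a genuine gap in the other case, which is the heart of the proposition. If $\SSS(n)\cong S_{n,j}$, your argument stalls in two ways. First, ``embed into, and be embedded by, the corresponding $\ST_s$'' establishes only equimorphy, which is automatic (everything in sight is a sibling of $\ST$) and does not determine the isomorphism type; the whole point of the proposition is to produce an actual isomorphism. Second, your concluding identification assumes the central double ray of $\SSS$ is isomorphic to one of the reserved assignments $\type_0,\dots,\type_{\sss-1}$; that is false in general, since a finite modification of $\type_0$ (for instance one with two descents from type $1$ to type $0$) is isomorphic to no $\SD_s$, and this is exactly the situation in which $\SSS(n)\cong S_{n,j}$ occurs, so there is no ``same $s$'' to propagate. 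The correct conclusion in that case is $\SSS\cong\ST_0$, and the proof requires re-centering rather than matching central rays: by construction $S_{n,j}$ (suitably extended) was amalgamated into $\ST$ at stage $k=2^n(2j+1)$ with its centre identified to a target vertex $u$ with $\wspin_{z}(u)=+1$, while the $k$-crater at $z$ carries a copy of $(\ST(k-1),z)$. One fixes an isomorphism $\phi\colon S_{n,j}\to\SSS(n)$, extends it by the unique similarity $\Phi_{u,v}=\Phi_{z,v}\circ\Phi_{u,z}$ of $\Sp$, and checks, using Lemma \ref{similaritymap} along $P_{u,z}$ and $P_{z,v}$ separately and Definition \ref{spinsign} at the exceptional vertex $z\in P_{u,z}$, that every target vertex outside $\SSS(n)$ goes to a target vertex of the same spin, so the same tree was amalgamated at both ends and the map extends crater by crater; the exceptional crater at $z$ gets matched with another copy of $(\ST(k-1),z)$, which is precisely what forces $\SSS\cong\ST_0$ rather than some other $\ST_s$. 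Without this re-centering and the spin bookkeeping at $z$, the case that actually distinguishes the siblings is not handled.
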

  
\begin{proof}  
Let $\SSS$ be a sibling of $\ST$. Being all siblings, we may assume that $\ST=\ST_0 \supseteq \ST_s \supseteq \SSS \supseteq \theta(\ST)$ for all $s < \sss$ and some  self-embedding $\theta:\ST \rightarrow \ST$.  By  Lemma \ref{embedT}, we can find  $n$  such that:
\begin{enumerate}
\item \label{boundn} $z_s \in \ST(n)$  for each $s<\sss$, 
\item  $\theta(z_0) \in \ST(n)$,
\item $\ST \setminus \theta(\ST) \subseteq \ST(n)$.
\end{enumerate}

Define  $\SSS(n) = \SSS \cap \ST(n)$. So $\theta(\ST(n) \subseteq \SSS(n) \subseteq \ST(n)$, and hence $\SSS(n)$ is a sibling of $\ST(n)$. Thus by construction  either $\SSS(n) \cong \ST_s(n)$ for some $s < \sss$,  or else $\SSS(n) \cong S:=S_{n,j}$  for some $j$. 

First assume the latter, and we shall show that $\SSS \cong \ST$. We do so by  extending an isomorphism from $S$ to $\SSS(n)$ mapping craters to craters attached to target vertices of the same spin, and hence have the same type assignments on all ray vertices of those craters, producing the required isomorphism from $\ST$ to $\SSS$.
Let $c$ be the centre of $S$ and  $k=2^n(2j+1)$. Now call $u$ the  tree vertex of $\ST(k)$ at the end of the path $P_{z,u}$ starting at $z=z_0$ with unimodal labels $\langle 01\cdots kk \cdots 10 \rangle$ increasing to $k$ and back to $0$, and such that  $sign_z(u)=-1$ and thus $\spin_{z}(u)=+1$. Note that $u$ and $z$ are in the same copy $(R',r')$ of $(R,r)$, and $u$ is a target vertex of (global) height $k$ (with respect to $z$). Thus at stage $k$ of the construction of $\ST(k)$, $S$ was extended (following the inductive construction) to $S'$  so that all tree vertices in $S'$ are amalgamated up to global height $k-1$ (with respect to $c$) before $(S',c)$ was amalgamated with $(\ST(k-1),u)$ over $(R',r')$. In particular we  consider $S'$ (and $S$) as a substructure of $\ST(k)$ with $c$ identified with $u$. 

Fix an isomorphism $\phi: S \rightarrow  \SSS(n)$, and let $\phi(u)=v \in \SSS(n)$. By Corollary \ref{similarityembsibling}, $\phi$ induces a similarity map $\hat{\Phi}=\restrict \phi \Sp(n)$ on a copy $\hat{\Sp}(n)$ of $\Sp(n)$ as a substructure of $S$ (and $\ST(n)$ centred at $u$), and  by Lemma \ref{similaritymap} it is the unique similarity from $S$ to $\SSS(n)$ sending  $u$ to $v$.  
Now from the global point of view of $\ST$, there is also a similarity map  $\Phi=\Phi^u_{u,v}$ on $\Sp$ (as a substructure of $\ST$), and it turns out that $\hat{\Phi}$ and $\Phi$ agree on  $\hat{\Sp}(n)$.
The reason is that, by definition, both maps  preserve the fingerprint of a path $P_{u,w}$ for $w \in S$. But note that those fingerprints may be different from the point of view of $S$ and $\ST$: of course the labelling and ray orderings agree, but the sign values are computed on one hand from the point of view of $S$ (and its centre before being embedded into $\ST$), and on the other hand from the point of view of $\ST$ and its centre $z$. Yet the fact that  $\hat{\Phi}$ and  $\Phi$ preserve the corresponding fingerprints implies that both will agree on their image of $w$. 
Hence we must find the required embedding from $\ST$ to $\SSS$ by extending $\hat{\Phi}$ to $\Phi$: we will show that $\Phi$ preserves the spin  at target vertices of all craters outside $S$.  To do so, we will go through $z$ by considering  $\Phi_{u,v}=\Phi_{z,v} \circ \Phi_{u,z}$, 

By Lemma \ref{similaritymap}, $\wspin_u(w)=\wspin_z(w)=\wspin_z(\Phi_{u,z}(w))$ for all $w \in \wR_0^u \cap \wR_0^z$, except possibly for $w \in P_{u,z}$. 
This means that for all target vertices $w$ of global height $\ell > n$ (with respect to $u$) not on  $P_{u,z}$, $\Phi_{u,z}(w)$ is a target vertex (since $\whth_z(v) \leq n$) of the same spin (with respect to $z$), and thus the same tree is amalgamated at $w$ and $\Phi_{u,z}(w)$. Hence $\Phi_{u,z}$ can be extended to an isomorphism  $\phi^w_{u,z}$ (matching the type assignments at ray vertices) on th
e $\ell$-crater centered at $w$ to the $\ell$-crater centred at $\Phi_{u,z}(w)$.
Note this includes all target vertices in $S' \setminus S$ which were amalgamated before $S'$ was itself amalgamated to $\ST(k)$.
Now $w=z$ is the only (target) tree vertex on $P_{u,z}$ (since $\Phi(u)=v$ has already been taken care of). By Definition  \ref{spinsign}, $\sign_u(z)=\spin_{z}(u) = +1$ implying  through similarity that $\sign_z(\Phi_{u,z}(z))=+1$, and thus $\wspin_z(\Phi_{u,z}(z))=-1$.  Hence a copy of $\ST(k-1)$ is amalgamated at ($z$ and) $\Phi_{u,z}(z)$, and $\Phi_{u,z}$ can again be extended to an isomorphism $\phi^z_{u,z}$ on the $k$-crater centered at $w=z$ (namely $(\ST(k-1),z)$) to the $k$-crater centered at $\Phi_{u,z}(w)$ (a copy of $(\ST(k-1),z)$.
Note this is a crucial part to ensure that $\SSS \cong \ST=\ST_0$, and not any other  $\ST_s$. 
\begin{figure}[ht]
\includegraphics[width=.8\textwidth]{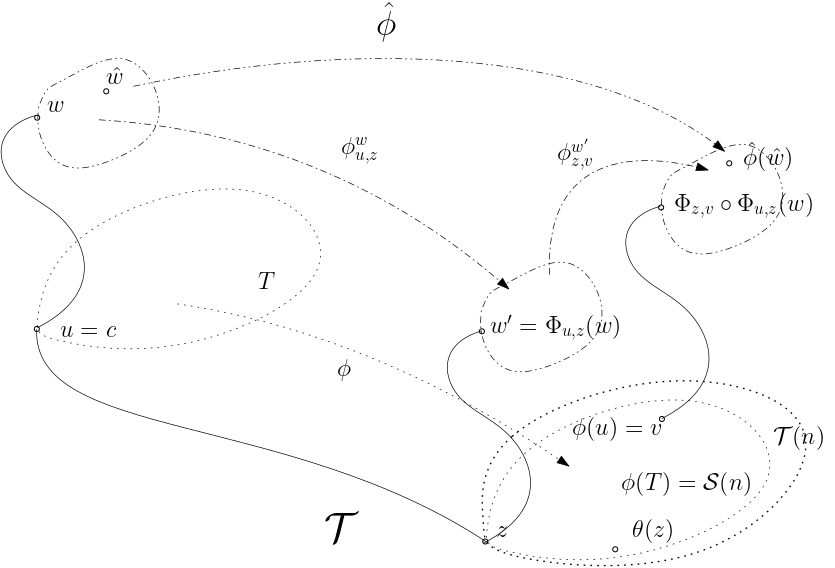}
\caption{Proposition \ref{atmost2sib}: The case $\SSS(n)\cong T=S_{n,j}$.}
\end{figure}
Next consider  the similarity map $\Phi_{z,v}$ on $\Sp$. Then again, by Lemma \ref{similaritymap}, $\wspin_z(w')=\wspin_z(\Phi_{z,v}(w'))$ for all $w' \in \wR_0^v \cap \wR_0^z$  with the only possible exception of vertices $w' \in P_{z,v}$. Thus, if $w'$ is a target vertex of global height $\ell >n$ with respect to $z$, then  $w' \notin P_{z,v}$  (since $\whth_z(v)\leq n$), and  
$\Phi_{z,v}(w')$ is again a target vertex and $\wspin_z(w')=\wspin_z(\Phi_{z,v}(w'))$. Thus  the same tree is amalgamated at $w'$ and $\Phi_{z,v}(w')$, and $\Phi_{u,z}$ can be extended to an isomorphism  $\phi^{w'}_{z,v}$   (matching the type assignments at ray vertices) on the $\ell$-crater centered at $w'$ to the $\ell$-crater centered at $\Phi_{z,v}(w')$.

Combining the maps, the required isomorphism $\hat{\phi}: \ST \rightarrow \SSS$ can be summarized as follows. For $\hat{w} \in \ST$:\\
\begin{tabular}{lll}
$\hat{\phi}(\hat{w})$ & $= \phi(\hat{w})$ & if $\hat{w} \in T$ (equivalently $\whth_u(\hat{w})\leq n$); \\
 & $=  \phi^{w'}_{z,v}  \circ \phi^w_{u,z} (\hat{w})$ & if $\hat{w} \notin T$, equivalently $\ell= \whth_u(\hat{w}) > n$, \\
 & & $\hat{w}$ belongs to the $\ell$-crater of the target vertex $w$,\\
 & & and $w'= \Phi_{u,z}(w)$.\\
\end{tabular} 

\medskip 

The case that $\SSS(n) \cong \ST_s(n)$ for some $s < \sss$ is similar but relatively simpler; we show that $\ST_s \cong \SSS $ . 

Fix an isomorphism $\phi: \ST_s(n) \rightarrow  \SSS(n)$, and let $v=\phi(z_s) \in \SSS(n) \subseteq \ST(n)$. 
By Corollary \ref{similarityembsibling}, $\phi$ induces a similarity map $\hat{\Phi}=\restrict \phi \Sp(n) $. By Lemma \ref{similaritymap}, it is the unique similarity from $\Sp(n)$  sending  $z_s$ to $v$, and must therefore readily agree with the similarity $\Phi=\Phi_{z_s,v}$ on $\Sp$. We will show that $\Phi$ preserves the spin at target vertices of all craters outside $\ST_s(n)$. Note here that the spin in $\ST_s$ is determined by its centre $z_s$, and the spin in $\SSS \subseteq \ST$  is determined by the centre $z_0$.
By Lemma \ref{similaritymap},  $\wspin_{z_s}(w)=\wspin_v(\Phi(w))$  for all $w \in \wR_0^u \cap \wR_0^v$, except possibly for $w \in P_{z_s,v}$. 
Moreover, by Lemma \ref{globalspinpreserving}, $\wspin_v(\Phi(w))=\wspin_{z_0}(\Phi(w))$ as long as $\Phi(w) \notin P_{v,z_0}$. 
But any target vertex $w \notin \ST_s(n)$ had global height $\ell>n$ and satisfies $\whth_{z_s}(w) > n$, hence $w$ cannot be on $ P_{z_s,v} \subseteq \ST(n)$, 
$(\Phi(w)$  cannot be on $ P_{v,z_0} \subseteq \ST(n)$ since $\whth_{z_s}(w) > n$ implies that $\whth_{v}(\Phi(w)) > n$, 
and $\Phi(w)$ is also a target vertex since moreover  $\wcol_{z_s}(w)=\wcol_v(\Phi(w))$ by Lemma \ref{similaritymap}, and $\wcol_z(\Phi(w))=\wcol_v(\Phi(w))$ by Lemma \ref{globcolpreserv}.
Hence $\Phi$ can be extended to an isomorphism  $\phi^w$ (matching the type assignments at ray vertices) on the $\ell$-crater centered at $w$ to the $\ell$-crater centered at $\Phi(w)$.
\begin{figure}[ht]
\includegraphics[width=.8\textwidth]{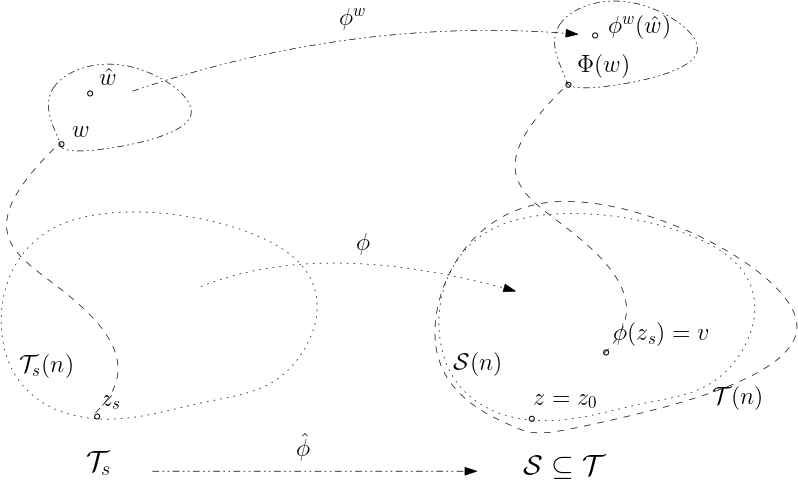}
\caption{Proposition \ref{atmost2sib}: The case  $\SSS(n) \cong \ST_s(n)$.}
\end{figure}

Combining the maps, the required isomorphism $\hat{\phi}: \ST_s \rightarrow \SSS$ can be summarized as follows. For $\hat{w} \in \ST_s$:\\
\begin{tabular}{lll}
$\hat{\phi}(\hat{w})$ & $= \phi(\hat{w})$ & if $\hat{w} \in  \ST_s(n)$ (equivalently $\whth_{z_s}(\hat{w})\leq n$); \\
 & $=  \phi^{w} (\hat{w})$ & if $\hat{w} \notin T$, equivalently $\ell= \whth_u(\hat{w}) > n$, \\
 & & $\hat{w}$ belongs to the $\ell$-crater of the target vertex $w$.\\
\end{tabular}

This completes the proof. 
\end{proof} 
 
 We finally have all the ingredients to prove our first main theorem. \\
 
\noindent {\bf Theorem \ref{thm:main1}.}
For each non-zero $\sss \in \N$, there is a locally finite tree $\ST$ with exactly $\sss$ siblings, considered either as relational structures or trees. Moreover, for $\sss=1$, the tree is not a ray, yet it has a non-surjective embedding. \\
Thus the conjectures of Bonato-Tardif, Thomass\'e, and Tyomkyn regarding the sibling number of trees and relational structures are all false. 
  
\begin{proof}
By Propositions \ref{atleast2sib} and \ref{atmost2sib}, $\ST$ is a locally finite tree with $\sib(\ST)=\sss$, hence disproving the  Bonato-Tardif conjecture in the case $\sss \geq 2$, and hence also Tyomkyn's first conjecture. 

By Lemma \ref{embedT}, any sibling of $\ST=\ST_0$ viewed as a substructure of $\ST$  differs from $\ST$ by a finite set of ray vertices of different type assignments, meaning a single leaf has been removed  from finitely many type 1 vertices to become type 0 vertices. But this means that $\ST \oplus 1$ does not embed in $\ST$, and hence any sibling of $\ST$, viewed as a binary relational structure, is connected and thus a tree. In this case Thomass\'e's conjecture is equivalent to Bonato-Tardif's conjecture, and thus also false.

Finally consider the  special case $\sss=1$ so that $\sib(\ST)=1$. The embedding $\phi$ of $\SD_0$ given by translation $\phi(v_i)=v_{i+1}$, and its natural extension to $\ST$, is a proper embedding ($v_1$ of type 1 has become type 0), and $\ST$ is certainly not a ray. Thus in this case $\ST$ disproves Tyomkyn's second conjecture. 
\end{proof}

\section{Siblings of Partial Orders}
The above construction of locally finite trees with prescribed finite number of siblings can be adapted to provide a similar construction of partial orders with a prescribed finite number of sibling, hence our second main theorem. \\

\noindent {\bf Theorem \ref{thm:main2}.}
For each non-zero $\sss \in \N$, there is a partial order  $\SP$ with exactly $\sss$ siblings (up to isomorphy). \\

We briefly outline the main ingredients for the proof. This will be done by following the construction using the modified double rays $\SD'$ as described earlier (see Figure \ref{doubleraysposets}), to obtain locally finite trees $\ST'$ with a prescribed finte number of siblings. Then a partial order is defined on $\ST'$ to create  a partial order $\SP$ so to have the same monoid of embeddings (either as a tree or as a partial order), and hence the result follows. 

\subsection{Partial ordering on the gadgets}
In the construction of $\ST$ (and $\ST'$), we have used various gadgets of the form $PK(2n,m)$, and we define a partial order on $PK(2n,m)$  in the form of a fence as follows.

\begin{definition}\label{gadget}
On  $PK(2n,m)$, the finite gadget formed by connecting $u \in K_{1,m}=(u,V)$ to the end vertex $u_{2n}$ of a path $\langle u_0, u_1, \ldots, u_{2n} \rangle$ of length $2n$, define a partial order in the form of a fence as follows:\\
\begin{enumerate}
\item $ u_{0} < u_{1}$, $u_{2n}< u_{2n-1},$ 
\item $ u_{2i} < u_{2i-1}$ and $u_{2i}<u_{2i+1} $  for  $0<i<n$, 
\item $ u_{2n} < v $  for all $v \in V$.
\end{enumerate}
\end{definition}

\begin{figure}[ht]
\includegraphics[width=.8\textwidth]{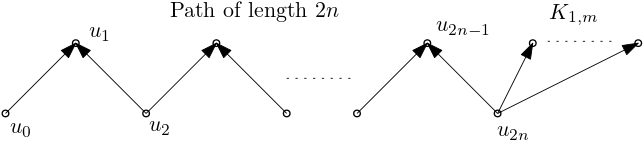}
\caption{The gadget  $PK(2n,m)$ as a partial order.}
\end{figure}

We record the following immediate observation. 

\begin{observation}\label{obs:gadgetemb}
$PK(2n,m)$ embeds into $PK(2n',m')$ as rooted trees if and only if $n=n'$ and $m \leq m'$.\\
Moreover any graph embedding of such a gadget into another one as a rooted tree is an order embedding, and vice-versa.
\end{observation}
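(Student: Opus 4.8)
The plan is to base everything on the fact that, as long as $m\ge 2$ — which holds for every gadget actually occurring in the construction (the label gadgets $PK(2\ell+6,2)$ and the type gadgets $PK(2,2)$, $PK(2,3)$, $PK(4,2)$) — the gadget $PK(2n,m)$, rooted at the path endpoint $u_0$, has a \emph{unique} vertex of degree $\ge 3$, namely the branch vertex $u_{2n}$, of degree $m+1$. One first records the harmless reduction: if $m\ge 2$ and a root-preserving tree embedding $\phi\colon PK(2n,m)\to PK(2n',m')$ exists, then $\phi(u_{2n})$ has degree $\ge m+1\ge 3$, so $PK(2n',m')$ too has a vertex of degree $\ge 3$ and hence $m'\ge 2$; thus one may assume $m,m'\ge 2$ throughout.

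For the ``if'' direction I would simply exhibit, when $n=n'$ and $m\le m'$, the map sending $u_i\mapsto u'_i$ for $0\le i\le 2n$ and the $m$ leaves at $u_{2n}$ injectively to the $m'$ leaves at $u'_{2n'}$. This is visibly a root-preserving subtree embedding; and since in the fence partial order of Definition \ref{gadget} two elements are comparable exactly when they are related by a covering relation — i.e. exactly when they are adjacent in the tree — this same map is simultaneously an order embedding.

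For the ``only if'' direction I would take a root-preserving tree embedding $\phi$, so $\phi(u_0)=u'_0$. As $\phi$ carries the $m+1\ge 3$ neighbours of $u_{2n}$ to distinct neighbours of $\phi(u_{2n})$, the vertex $\phi(u_{2n})$ has degree $\ge 3$ and therefore equals $u'_{2n'}$. Since $\phi$ is a tree embedding it maps the unique $u_0$--$u_{2n}$ path (of length $2n$) onto the unique path joining their images, which is the spine $u'_0,u'_1,\dots,u'_{2n'}$ (of length $2n'$); by injectivity the two lengths agree, so $n=n'$ and $\phi(u_i)=u'_i$ for all $i$. The $m$ leaves at $u_{2n}$ are then sent injectively into the set of neighbours of $u'_{2n'}$ other than $u'_{2n'-1}$, which has size $m'$, so $m\le m'$. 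For the ``vice versa'' clause: the embeddings just classified are order-preserving and order-reflecting by direct inspection, while conversely an order embedding preserves and reflects comparability, hence — comparability being the same as adjacency in these fences — is a graph embedding; and it automatically sends $u_0$ to $u'_0$, since $u_0$ is the unique minimal element lying strictly below exactly one element.

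The one point that needs a little care is the rigidity argument pinning $\phi$ along the spine — that $\phi$ must act as the identity on the $u_i$'s (and hence as an injection on the leaves); the explicit embedding in the ``if'' direction and the neighbour count bounding $m$ are routine, which is why the statement is recorded merely as an observation.
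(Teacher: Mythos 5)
The paper records this statement as an immediate observation and supplies no proof, so there is nothing to compare against; judged on its own, your argument for the first sentence and for the direction ``root-preserving graph embedding $\Rightarrow$ order embedding'' is correct, and your restriction to $m,m'\ge 2$ is both sensible and genuinely needed (for $m=1$ the gadget is a bare path and the ``only if'' direction fails; all gadgets occurring in the paper do have $m\ge 2$). Your identification of comparability with adjacency in the fence order of Definition \ref{gadget} is also right, and it correctly yields that every order embedding is an induced graph embedding.

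The genuine gap is the final step, where you claim an order embedding automatically sends $u_0$ to $u'_0$ because $u_0$ is the unique minimal element lying strictly below exactly one element. That is a property of the source poset, and it is not reflected by a non-surjective embedding: the image of $u_0$ may acquire additional upper covers in the target. Indeed the claim is false across gadgets of different spine lengths: for $n<n'$ and $m\le m'$ the parity-preserving shift $u_i\mapsto u'_{2(n'-n)+i}$ for $0\le i\le 2n$, leaves mapped injectively to leaves, preserves and reflects the fence order (even-indexed spine vertices go to even-indexed ones), so it is an order embedding of $PK(2n,m)$ into $PK(2n',m')$ that does not fix the root; for instance $PK(2,2)$ order-embeds into $PK(4,2)$ this way. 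What is true, and what the observation needs, is: order embeddings coincide with graph embeddings (not necessarily root-preserving); any graph embedding sends the unique branch vertex to the unique branch vertex and hence maps the spine into the spine, so a root-preserving one forces $n=n'$; and conversely, when $n=n'$ every graph (equivalently order) embedding is automatically root-preserving, because the image of the length-$2n$ spine must be the entire length-$2n'$ spine ending at the branch vertex. So either restrict the ``vice-versa'' clause to gadgets of equal spine length (the only case in which rooted embeddings exist at all), or replace the minimal-element argument by this branch-vertex/spine-length argument; as written, that step fails.
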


\subsection{Partial ordering on $(R,r)$}
To define a partial ordering on $(R,r)$, we will need the following property.

\begin{lemma}\label{lem:unisign}
Given a path $P_{w_1,w_2}$  in $(R,r)$ between tree vertices $w_1, w_2$ with unimodal labels $\langle 0 1 \cdots k k \cdots 1 0 \rangle$ increasing to $k>0$ and back to $0$, then 
$\sign_{w_1}(w_2)=-\sign_{w_2}(w_1)$.
\end{lemma}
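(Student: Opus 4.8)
The plan is to compute $\sign_{w_1}(w_2)$ and $\sign_{w_2}(w_1)$ directly from Definition \ref{spinsign} and compare. Both quantities are defined relative to the root $r$ via the spin function, so the first step is to relate the sign at $w_1$ (resp. $w_2$) to $\spin_r$. Concretely, $\sign_{w_1}(w_2)$ equals $\spin_r(w_1)$ or $-\spin_r(w_1)$ according as $w_2$ and $r$ lie in the same or opposite neighbourhoods of $w_1$, and symmetrically for $\sign_{w_2}(w_1)$. The second step is to count the combinatorial data along the unimodal path $P_{w_1,w_2}$: since the label sequence is $\langle 0\,1\,\cdots\,k\,k\,\cdots\,1\,0\rangle$, there is exactly one consecutive pair (at the top, the two adjacent vertices of label $k$), so $P^{cp}_{w_1,w_2}=1$; and the only tree vertices (label $0$) on this path are its two endpoints $w_1,w_2$, so $P^{tv}_{w_1,w_2}=2$. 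Hence $(-1)^{cp+tv}=(-1)^{1+2}=-1$ for this path.

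The third step is to combine these. Using $\spin_{w_1}(w_2)=\sign_{w_1}(w_2)(-1)^{cp+tv}=-\sign_{w_1}(w_2)$, and similarly $\spin_{w_2}(w_1)=-\sign_{w_2}(w_1)$, the claim $\sign_{w_1}(w_2)=-\sign_{w_2}(w_1)$ is equivalent to $\spin_{w_1}(w_2)=-\spin_{w_2}(w_1)$. Now I would invoke Corollary \ref{spinpreserv3}(2) (or directly Lemma \ref{spinpreserv}) applied to the pair $u=w_1$, $v=w_2$ with $w=w_2$: since $w_2\in P_{w_1,w_2}$ this falls into case (2), giving $\spin_{w_1}(w_2)=-\spin_{w_2}(w_2)$ — but $\spin$ is undefined at its base point, so instead I should run the comparison through $\spin_r$ using Lemma \ref{spinpreserv}(1), which gives $\spin_{w_1}(r)=\sign_r(w_1)$ and $\spin_{w_2}(r)=\sign_r(w_2)$, together with a careful bookkeeping of whether $w_1,w_2$ sit on the same side of $r$ and how many consecutive pairs / tree vertices lie on $P_{r,w_1}$, $P_{r,w_2}$, $P_{w_1,w_2}$. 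The cleanest route is probably: pick the vertex $w'\in P_{w_1,w_2}$ where $P_{r,w_1}$ and $P_{r,w_2}$ diverge, split $P_{w_1,w_2}$ at $w'$, and apply the sign/spin definitions on each half.

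The main obstacle I expect is the parity bookkeeping when $r$ itself lies on or near $P_{w_1,w_2}$, or more generally tracking how $\sign_{w_1}$ and $\sign_{w_2}$ each relate to $\sign_r$ through possibly different neighbourhood-side choices; one must check all the sub-cases ($w'=w_1$, i.e. $w_1\in P_{r,w_2}$; $w'=w_2$; $w'$ strictly between; and $r$ inside versus outside $P_{w_1,w_2}$) and verify the parity $(-1)^{cp+tv}=-1$ of the unimodal segment always flips exactly the needed sign. This is the same flavour of argument as the three-case analysis in the proof of Lemma \ref{spinpreserv}, so I would model the computation on that proof, and the key structural input making it work is simply that a unimodal $0\to k\to 0$ path contributes an odd value $cp+tv=3$.
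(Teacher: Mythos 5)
Your plan is essentially the paper's proof: the paper likewise expresses $\sign_{w_1}(w_2)$ and $\sign_{w_2}(w_1)$ through Definition \ref{spinsign} in terms of $\spin_r(w_1)$ and $\spin_r(w_2)$, splits at the vertex $w$ with $P_{r,w_1}\cap P_{r,w_2}=P_{r,w}$ (your $w'$), and compares consecutive-pair and tree-vertex counts in the two cases $w=w_2$ and $w$ strictly interior (note $r$ cannot be interior since interior vertices have positive label), and your false start via Corollary \ref{spinpreserv3} is rightly discarded. One small caution for the execution: in the case $w=w_2$ the sign flip comes from the opposite-versus-same-neighbourhood clause of Definition \ref{spinsign} while $\spin_r(w_1)=\spin_r(w_2)$ (the extra segment adds an even amount to the exponent), so it is not literally the odd value $cp+tv=3$ of the unimodal segment that does the work in every case, but the case analysis you describe covers this.
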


\begin{proof}
Let $w$ so that $P_{r,w_1} \cap P_{r,w_2}=P_{r,w}$, and we may assume without loss of generality that $w \neq w_1$. 

\begin{figure}[ht]
\includegraphics[width=.5\textwidth]{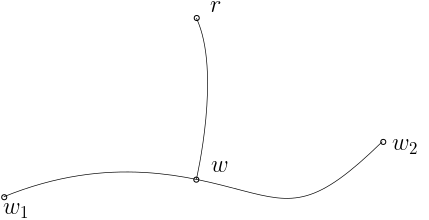}
\caption{Lemma \ref{lem:unisign}: $P_{r,w_1} \cap P_{r,w_2}=P_{r,w}$.}
\end{figure}

First suppose that $w=w_2$, then $w_1$ and $r$ belong to opposite neighbourhoods of $w_2$, and hence $\sign_{w_2}(w_1)=-\spin_r(w_2)$ according to Definition \ref{spinsign}.
Further in this case, $w_2$ and $r$ belong to the same neighbourhood of $w_1$, and hence $\sign_{w_1}(w_2)=+\spin_r(w_1)$ again according to Definition \ref{spinsign}. But $\spin_r(w_1)= \spin_r(w_2)$ since the path  $P_{w_1,w_2}$ contains a single consecutive pair and one more tree vertex beyond $w_2$. Hence  $\sign_{w_1}(w_2)=-\sign_{w_2}(w_1)$ in this case.

Now assume that $w \neq w_2,w_1$. Then $w_1$ and $r$ belong to the same neighbourhoods of $w_2$, and similarly 
$w_2$ and $r$ belong to the same  neighbourhoods of $w_1$. Hence 
$\sign_{w_2}(w_1)=\spin_r(w_2)$ and  $\sign_{w_1}(w_2)=\spin_r(w_1)$ by Definition \ref{spinsign}.
But the single consecutive pair of $P_{w_1,w_2}$ will be counted exactly once in either $\spin_r(w_2)$ or $\spin_r(w_1)$, and hence since the number of tree vertices is the same in both cases we get $\spin_r(w_2)=-\spin_r(w_1)$. Thus again  $\sign_{w_1}(w_2)=-\sign_{w_2}(w_1)$.
\end{proof}

Lemma \ref{lem:unisign} justifies Item (\ref{cp}) of the following definition,  showing that the choice of the tree vertex $w$ closest to either $u$ or $v$ ytields the same order. 

\begin{definition}\label{rposet}
Consider adjacent vertices $u,v \in R$.
\begin{enumerate}
\item\label{inc} If $\lab(u)=n$ and $\lab(v)=n+1$, then let $w \in R_0$ be the tree vertex nearest to $u$ (or $v$), and define: \\
\begin{tabular}{lcl} 
$ u < v$ & if & $\sign_w(u)= +1$, \\
$ u > v$ & if & $\sign_w(u)= -1$.\\
\end{tabular}
\item\label{cp} If $\lab(u)=\lab(v)$, then let $w \in R_0$ be the tree vertex nearest to $u$ (or $v$), and define: \\
\begin{tabular}{lcl} 
$ u > v$ & if & $\sign_w(u)= +1$, \\
$ u < v$ & if & $\sign_w(u)= -1$.\\
\end{tabular}
\end{enumerate}
\end{definition}

This provides the partial order we need on $R$.

\begin{observation}
The transitive closure of the the above relations makes $(R,<)$ a partial order.
\end{observation}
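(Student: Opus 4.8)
The plan is to handle two separate issues: first, that the relation prescribed by Definition~\ref{rposet} on adjacent vertices is genuinely well-defined (each edge of $R$ gets exactly one orientation), and second, that the transitive closure of the resulting oriented graph is irreflexive, so that together with transitivity (which is free, since $<$ is \emph{defined} as a transitive closure) we obtain a strict partial order. For the first issue, note that by the construction of $R$ any two adjacent vertices have labels differing by $0$ or $1$, so exactly one of Cases~(\ref{inc}), (\ref{cp}) applies. Case~(\ref{inc}) is unambiguous: if $\lab(u)=n$ and $\lab(v)=n+1$, then $v$ has a unique neighbour of label $n$, namely $u$, so $u$ lies on the geodesic from $v$ to its nearest tree vertex, and hence ``the tree vertex nearest to $u$'' and ``the tree vertex nearest to $v$'' coincide.

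The delicate case is~(\ref{cp}), the consecutive pairs, which is precisely what Lemma~\ref{lem:unisign} addresses. If $\lab(u)=\lab(v)=\ell$, then $\ell\geq 1$ (no two tree vertices are adjacent). I would first observe that the tree vertex $w_1$ nearest to $u$ and the tree vertex $w_2$ nearest to $v$ are distinct: the geodesic from $u$ to $w_1$ has strictly decreasing labels, hence does not pass through $v$, and symmetrically; concatenating these two geodesics through the edge $uv$ yields a walk from $w_1$ to $w_2$ with unimodal labels $\langle 0\,1\cdots \ell\,\ell\cdots 1\,0\rangle$, and this walk is a genuine path since a non-trivial closed non-backtracking walk in a tree is impossible. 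Then $u$ and $w_2$ lie in the same neighbourhood of $w_1$, and $v$ and $w_1$ lie in the same neighbourhood of $w_2$; since $\sign_w$ is constant on each neighbourhood of $w$ (Definition~\ref{spinsign}) this gives $\sign_{w_1}(u)=\sign_{w_1}(w_2)$ and $\sign_{w_2}(v)=\sign_{w_2}(w_1)$, while Lemma~\ref{lem:unisign} gives $\sign_{w_1}(w_2)=-\sign_{w_2}(w_1)$. Hence $\sign_{w_1}(u)=-\sign_{w_2}(v)$, so orienting the edge $uv$ by the rule read from $w_1$ (for the pair $u,v$) or from $w_2$ (for the pair $v,u$) produces the same orientation: the relation on edges is well-defined.

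For irreflexivity, I would use that $R$ is a tree. A directed cycle in the transitive closure would be witnessed by a closed walk $u_0\to u_1\to\cdots\to u_n=u_0$ in $R$ each of whose steps agrees with the chosen orientation of the traversed edge. Since no edge is oriented in both directions, such a walk never immediately backtracks, i.e.\ $u_{i-1}\neq u_{i+1}$ for all $i$ (cyclically). But in a tree every non-trivial closed walk must backtrack: root $R$ at $r$, pick $u_m$ of maximal distance to $r$ among the $u_i$; then both $u_{m-1}$ and $u_{m+1}$ are at distance exactly $\operatorname{dist}(u_m,r)-1$, and a vertex of a tree has a unique neighbour closer to $r$, forcing $u_{m-1}=u_{m+1}$, a contradiction. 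Thus there is no directed cycle, so $<$ is irreflexive and transitive, hence a strict partial order on $R$.

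I expect the main obstacle to be the bookkeeping in Case~(\ref{cp}): one must be alert that ``nearest tree vertex to $u$'' and ``nearest tree vertex to $v$'' can genuinely differ, verify $w_1\neq w_2$, and correctly identify which neighbourhood of $w_1$ (resp.\ $w_2$) contains $u$ and $w_2$ (resp.\ $v$ and $w_1$), so that Lemma~\ref{lem:unisign} can be invoked in exactly the form needed. The acyclicity step, by contrast, is essentially immediate from the tree structure.
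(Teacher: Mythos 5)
Your proposal is correct, and it matches what the paper intends: the paper states this Observation without proof, its only hint being the preceding remark that Lemma \ref{lem:unisign} ``justifies Item (\ref{cp})'' of Definition \ref{rposet}. Your well-definedness argument for consecutive pairs is exactly that intended justification, carried out correctly: you use that the two nearest tree vertices $w_1\neq w_2$ are joined through $uv$ by a unimodal path, that $\sign_{w_1}(u)=\sign_{w_1}(w_2)$ and $\sign_{w_2}(v)=\sign_{w_2}(w_1)$ by constancy on neighbourhoods, and then Lemma \ref{lem:unisign} -- and you adopt the right reading of the rule (apply the formula from the endpoint whose nearest tree vertex you chose), which is the reading under which the two choices agree. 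The acyclicity half, which the paper leaves entirely implicit, you supply by the standard argument that a directed closed walk respecting a single orientation per edge would be a non-backtracking closed walk, impossible in a tree; this is routine but it is the part that actually makes the transitive closure irreflexive, so it belongs in a complete proof. The only point you might add is the degenerate instance of clause (\ref{inc}) with $\lab(u)=0$, where $w=u$ and $\sign_w(u)$ is literally undefined; this is a defect of the paper's wording rather than of your argument (the only sensible reading there uses $\sign_u(v)$, and no ambiguity arises since the edge has a unique vertex distinct from $w$).
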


Now since  a graph embedding of $(R,r)$ is a similarity by Lemma \ref{similarityemb}, in particular it preserves the $\sign$ function at tree vertices, and as result is an order preserving embedding as defined above. Conversely we claim that an order embedding $\phi$ of $(R,<)$ is a graph embedding. 
First consider an edge $uv$ with $\lab(v)=\lab(u)+1$, and therefore $u$ and $v$ are comparable.  Then $u'=\phi(u)$ and $v'=\phi(v)$ have the same labels as $u$ and $v$ respectively due to the gadgets. But if $v''$ is the neighbour of $u'$ with $\lab(v'')=\lab(v)$, then any non-trivial path from $v'$ to $v''$ would contain a consecutive pair, and thus $v' \neq v''$ would imply that  $u'$ is incomparable to $v'$.
Similarly if $uv$ is a consecutive pair and thus again  $u$ and $v$ must be comparable. Let $w_1$ and $w_2 $ be the tree vertices closest to $u$ and $v$ respectively. From the above argument the paths $P_{w_1,u}$ and $P_{w_2,v}$ are mapped to the paths $P_{\phi(w_1),\phi(u)}$ and
$P_{\phi(w_2),\phi(v)}$ respectively. Since $\phi(w_1) \neq \phi(w_2)$, the path from $\phi(u)$ to $\phi(v)$ must contain a consecutive pair, 
and $\phi(u)$ is incomparable to $\phi(v)$ unless it is the edge $\phi(u)\phi(v)$ we are looking for.

We record this discussion as follows. 

\begin{lemma}\label{lem:graphposetmonoidR}
The monoid of embeddings of $(R,r)$ as a tree is the same as that of $(R,<)$ as a partial order. 
\end{lemma}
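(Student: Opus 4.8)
The plan is to prove Lemma~\ref{lem:graphposetmonoidR} by showing that the two monoids of embeddings coincide \emph{as sets of maps} on the vertex set of $R$: a graph embedding of $(R,r)$ is an order embedding of $(R,<)$, and conversely. Since both statements have essentially been argued in the discussion preceding the lemma, the proof is a matter of assembling those observations cleanly; I would present it as two inclusions.

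For the forward direction, I would start from the fact (Observation~\ref{symmetry} together with the Main Lemma~\ref{similarityemb}, specialized to a single copy of $(R,r)$) that every graph embedding $\phi$ of $(R,r)$ is surjective and induces a similarity, hence preserves labels and, crucially, the $\sign_w$ function at every tree vertex $w$. Because Definition~\ref{rposet} defines the order on each edge $uv$ purely in terms of $\lab(u)$, $\lab(v)$, and $\sign_w(u)$ for the nearest tree vertex $w$ (and Lemma~\ref{lem:unisign} guarantees this is well-defined independently of the choice of nearest tree vertex), preservation of labels and signs immediately gives that $\phi$ is order-preserving on edges; taking transitive closure, $\phi$ is an order embedding of $(R,<)$. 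One should note that the gadgets attached at vertices to encode labels are themselves ordered by Definition~\ref{gadget}, and by Observation~\ref{obs:gadgetemb} a graph embedding of a gadget as a rooted tree is automatically an order embedding of it, so $\phi$ restricted to the gadgets is order-preserving too.

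For the converse, let $\phi$ be an order embedding of $(R,<)$. The key point is that comparability in $(R,<)$ detects adjacency in $R$. First, $\phi$ preserves labels: the label gadgets are order-embedded into label gadgets, and by Observation~\ref{obs:gadgetemb} distinct gadgets are mutually non-embeddable, so the label-$\ell$ gadget at $v$ must go to a label-$\ell$ gadget, forcing $\lab(\phi(v))=\lab(v)$. Now take any edge $uv$ of $R$; by construction $u,v$ are comparable (either $\lab(v)=\lab(u)\pm1$ or $uv$ is a consecutive pair), so $\phi(u),\phi(v)$ are comparable in $(R,<)$. Suppose $\phi(u)\phi(v)$ were not an edge. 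Then the path $P_{\phi(u),\phi(v)}$ has length $\ge 2$, and one shows it must traverse a consecutive pair or change label direction in a way that makes its two endpoints incomparable in $(R,<)$: indeed the order on $R$ is a fence-like order along each ``monotone label branch'', so comparability between two vertices is possible only when they are adjacent. (Here I would reuse exactly the argument given in the paragraph before the lemma: if $\lab(\phi(v))=\lab(\phi(u))+1$ and $\phi(v)$ is not the correct neighbour $v''$ of $\phi(u)$, the path $P_{\phi(v),v''}$ contains a consecutive pair and renders $\phi(u),\phi(v)$ incomparable; and symmetrically for consecutive pairs, using that the tree vertices closest to $\phi(u)$ and $\phi(v)$ are distinct.) This contradiction shows $\phi$ maps edges to edges, i.e.\ $\phi$ is a graph embedding.

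The main obstacle is the converse direction, specifically verifying rigorously that comparability in $(R,<)$ implies adjacency in $R$ --- equivalently, that no ``long'' path in $R$ has comparable endpoints. This requires a small case analysis on the shape of a path (whether it contains a consecutive pair, and how the labels rise and fall along it) together with the well-definedness supplied by Lemma~\ref{lem:unisign}; everything else is bookkeeping with labels and the gadget non-embeddability from Observation~\ref{obs:gadgetemb}. Once both inclusions are established, the two monoids are literally the same set of self-maps of $R$ closed under composition, which is the claim.
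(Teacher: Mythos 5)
Your two-inclusion plan and your forward direction coincide with the paper's: a graph embedding of $(R,r)$ is a similarity (Lemma \ref{similarityemb}), hence preserves labels and the sign at tree vertices, and since Definition \ref{rposet} is phrased purely in those terms (with Lemma \ref{lem:unisign} giving well-definedness and Observation \ref{obs:gadgetemb} handling the gadgets), it is order-preserving. The gap is in the converse, where the principle you offer as justification is false: $(R,<)$ is \emph{not} ``fence-like along monotone label branches'', and comparability does not detect adjacency. By Definition \ref{rposet}(1), inside a branch of a tree vertex $w$ on which the sign is $+1$, every label-increasing edge is oriented upward, so a path with strictly increasing labels is a \emph{chain}: a label-$1$ vertex and a label-$3$ vertex at distance two are comparable. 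Worse for the specific step you need, even pairs whose labels differ by one can be comparable without being adjacent: take $x$ of label $2$ adjacent to $y$ of label $1$ in a branch of sign $+1$ (so $y<x$), and let $y'$ be the consecutive-pair partner of $y$; Definition \ref{rposet}(2) gives $y>y'$, hence $x>y>y'$, so $x$ and $y'$ are comparable, carry labels $2$ and $1$, and are at distance two. Thus ``$\phi(u)\phi(v)$ not an edge $\Rightarrow$ $\phi(u),\phi(v)$ incomparable'' cannot be extracted from comparability and labels alone, which is the only independent argument you give; the parenthetical fallback of ``reusing exactly the argument in the paragraph before the lemma'' is precisely the content a blind proof is asked to supply.

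What the paper actually does for the converse is the finer local analysis you relegate to that parenthesis: labels of images are forced by the mutually non-embeddable gadgets; then, for an edge $uv$ with $\lab(v)=\lab(u)+1$, it compares $\phi(v)$ with the particular neighbour $v''$ of $\phi(u)$ carrying that label and argues that $\phi(v)\neq v''$ would put a consecutive pair between $\phi(v)$ and $v''$ and force $\phi(u)$, $\phi(v)$ to be incomparable; for a consecutive pair $uv$ it uses that the nearest tree vertices $w_1\neq w_2$ have distinct images, so a non-edge image would again interpose a consecutive pair. Even this argument tacitly relies on the observation (worth stating) that any chain witnessing comparability must run monotonically along the unique tree path between the two vertices, since each edge carries a fixed orientation and a walk in a tree that is not a path traverses some edge in both directions. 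So your skeleton is the paper's, but the key converse step is not established in your write-up, and the heuristic you substitute for it (comparability only between adjacent vertices) is incorrect.
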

    
\subsection{Partial ordering on double rays}
This is where we use the spacial double rays $\SD'$ described in Figure \ref{doubleraysposets}. We  order such a double ray as a fence, and together with the ordering of gadgets above yields a partial ordering of double rays. 
 
 \begin{figure}[ht]
\includegraphics[width=.9\textwidth]{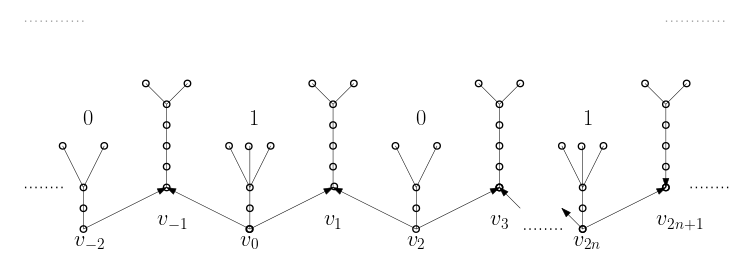}
\caption{The partial order on a typical double ray $\SD'$ (plus the order on the gadgets as above).}
\end{figure}
   
 Again we record the correspondence between graph and order embeddings. 

\begin{lemma}\label{lem:graphposetmonoidD}
 The monoid of embeddings of a double ray  $\SD'$ as a tree is the same as that of $(\SD',<)$ as a partial order. 
\end{lemma}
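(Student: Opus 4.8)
The plan is to mirror the argument already carried out for $(R,r)$ in Lemma \ref{lem:graphposetmonoidR}, now for the modified double ray $\SD'$. Recall that $\SD'$ carries two kinds of gadgets: the $0$--$1$ type gadgets $PK(2,2)$ and $PK(2,3)$ on even-indexed vertices, and the auxiliary gadget $PK(4,2)$ on odd-indexed vertices; by Observation \ref{obs:gadgetemb} these three gadgets are pairwise non-embeddable as rooted trees except for $PK(2,2)\hookrightarrow PK(2,3)$. First I would record the easy direction: a graph embedding $\phi$ of $\SD'$ must respect labels/gadgets, so it sends even-indexed vertices to even-indexed vertices and odd to odd, sends type-$0$ gadgets to type-$0$ or type-$1$ gadgets and type-$1$ to type-$1$, and sends $PK(4,2)$-gadgets to $PK(4,2)$-gadgets. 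Since the backbone of $\SD'$ is a double ray, such a $\phi$ is a translation of the double ray (in its natural direction, dictated by the impossibility of sending type $1$ into type $0$). Each backbone edge $u_iu_{i+1}$ is, under the fence order of Definition \ref{gadget} applied to $\SD'$, a comparability; and the order on the gadgets is exactly the fence order, so Observation \ref{obs:gadgetemb} tells us a rooted-tree embedding of a gadget is automatically an order embedding. Hence $\phi$ is order preserving.

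For the converse, suppose $\phi$ is an order embedding of $(\SD',<)$. As in the proof surrounding Lemma \ref{lem:graphposetmonoidR}, I would argue that $\phi$ must be a graph embedding by using comparability to force adjacency. Each backbone vertex $u_i$ is the unique minimal (or maximal, according to parity) element of its gadget-plus-incident-edges local configuration; the fence structure along the backbone means $u_i$ is comparable precisely to $u_{i-1}$, $u_{i+1}$, and the first vertex of its attached gadget. An order embedding preserves these comparabilities and, crucially, preserves incomparability where the image would otherwise create a long path containing a ``valley'' or ``peak'' that destroys comparability --- exactly the mechanism used for $(R,r)$: if $\phi(u_i)$ and $\phi(u_{i+1})$ were not adjacent on the backbone, the path between them in the tree would traverse a gadget or a reversal of the fence orientation, making them incomparable. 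So $\phi$ maps backbone edges to backbone edges and gadget edges to gadget edges, i.e.\ $\phi$ is a graph embedding. Combining both directions gives equality of the two embedding monoids.

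The one point needing a little care --- and the only place I would slow down --- is the odd-indexed vertices carrying $PK(4,2)$. Because $PK(4,2)$ does not embed into either type gadget (and vice versa), an order embedding cannot ``slide'' an odd vertex onto an even one, which is what pins down the parity-preservation and ultimately forces $\phi$ to be a genuine translation rather than some order-preserving but non-graph map; I would make explicit that the fence orientation on the backbone of $\SD'$ is consistent (alternating peaks and valleys) so that a translation by an \emph{odd} amount is ruled out not just by the gadgets but also compatibly with the order, leaving exactly the translations by non-negative even amounts composed with the monotone gadget inclusions. This is the analogue of the ``consecutive pair forces incomparability'' device in the $(R,r)$ argument, and it is routine once the fence orientation is drawn out. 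With that, the statement follows exactly as recorded.

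I would therefore write the proof as: (i) cite Observation \ref{obs:gadgetemb} to transfer gadget graph-embeddings to order-embeddings and back; (ii) observe graph embeddings of $\SD'$ are backbone translations respecting gadgets, hence order embeddings; (iii) for an order embedding, use comparability/incomparability along the fence to force backbone edges to backbone edges and gadget edges to gadget edges, exactly parallel to the paragraph preceding Lemma \ref{lem:graphposetmonoidR}; (iv) conclude the monoids coincide. The main obstacle, such as it is, is bookkeeping the fence orientation on $\SD'$ so that the parity argument of step (iii) is airtight; everything else is a direct transcription of the $(R,r)$ case.
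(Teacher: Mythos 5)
Your proposal is correct and follows essentially the route the paper intends: the paper records this lemma without a written proof, relying on exactly the ingredients you cite — Observation \ref{obs:gadgetemb} for the gadgets, the fact that a graph embedding must carry the backbone to the backbone respecting parity and type, and the comparability/incomparability argument along the fence, in direct parallel with the discussion preceding Lemma \ref{lem:graphposetmonoidR}. Your extra care about the $PK(4,2)$ gadgets on odd-indexed vertices and the fence orientation is precisely the bookkeeping the paper leaves implicit.
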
   
        
\subsection{The posets $\langle \SP_s: s < \sss \rangle$}
For a non-zero $\sss \in \N$, the posets $\langle \SP_s: s < \sss \rangle$  we are seeking to produce consist of the trees $\langle \ST'_s: s < \sss \rangle$ constructed as before but now using the special double rays $\SD'_s$ (with type assignments on even indexed vertices), equipped with the (transitive closure) of the above orderings on copies of $(R,r)$ and double rays. 

It is now clear that order embeddings of $\SP_s$ will preserve copies of $(R,r)$ and double rays. First note that two vertices in a copy of $(R,r)$ are connected through a path in the comparability graph of $\SP_s$, hence so is their image; but if their image belongs to different copies of $(R,r)$  that path would go through an odd-indexed vertex of a double ray, which is impossible due to the gadgets being mutually non-embedable. Similarly two vertices on a double ray cannot be mapped to different double rays since again  the image of their comparability path would be required to contain a vertex of positive label, which is again impossible due to the gadgets being mutually non-embedable.

Hence order embeddings of $\SP_s$ coincide with graph embeddings of $\ST'_s$.  Thus $\SP=\SP_{0}$ has indeed exactly $\sss$ siblings up to isomorphism. This completes the proof of the second main theorem.

\end{document}